\DeclareMathAlphabet{\itbf}{OML}{cmm}{b}{it}
\newcommand{\CC}{\mathbb{C}}
\newcommand{\RR}{\mathbb{R}}
\newcommand{\om}{\omega}
\newcommand{\ep}{\varepsilon}
\newcommand{\wh}{\widehat}
\newcommand{\cS}{\mathcal{S}}
\newcommand{\cC}{\mathcal{C}}
\newcommand{\cI}{\mathscr{I}}
\newcommand{\cd}{\mathfrak{D}}
\newcommand{\cB}{\mathfrak{B}}
\newcommand{\cL}{\mathfrak{L}}
\newcommand{\brho}{\bm{\rho}}
\newcommand{\btau}{\bm{\tau}}
\newcommand{\bX}{{\itbf X}}
\newcommand{\bU}{{\itbf U}}
\newcommand{\bG}{{\itbf G}}
\newcommand{\bV}{{\itbf V}}
\newcommand{\bD}{{\itbf D}}
\newcommand{\bW}{{\itbf W}}
\newcommand{\bE}{{\itbf E}}
\newcommand{\bZ}{\itbf{Z}}
\newcommand{\rc}{r_{_{\hspace{-0.02in}\cC}}}
\newcommand{\bcW}{\boldsymbol{\mathcal W}}
\newcommand{\bu}{{\itbf u}}
\newcommand{\bv}{{\itbf v}}
\newcommand{\by}{{\itbf y}}
\newcommand{\br}{{\itbf r}}
\newcommand{\bx}{{\itbf x}}
\newcommand{\bmm}{{\itbf m}}
\newcommand{\be}{{\itbf e}}
\newcommand{\bd}{{\itbf d}}
\newcommand{\bz}{{\itbf z}}
\newcommand{\bnu}{\boldsymbol{\nu}}
\newcommand{\bga}{\boldsymbol{\gamma}}
\newcommand{\bg}{{\itbf g}}
\newcommand{\bvr}{{\itbf v}_{\rightarrow}}
\newcommand{\bxi}{\boldsymbol{\mathfrak{X}}}
\newcommand{\bEr}{\boldsymbol{\mathcal{E}}}
\newcommand{\bR}{\itbf{R}}
\newcommand{\obr}{\overline{\br}}
\newcommand{\oby}{\overline{\by}}
\newcommand{\oL}{\overline L}
\newcommand{\oom}{\overline{\om}}
\newcommand{\ok}{\overline{k}}
\newcommand{\ot}{\overline{t}}
\newcommand{\ola}{\overline{\lambda}}
\newcommand{\lin}{\left <}
\newcommand{\rin}{\right >}
\renewcommand{\hat}{\widehat}
\newenvironment{mpmatrix}{\begin{medsize}\begin{pmatrix}}%
    {\end{pmatrix}\end{medsize}}%
\begin{document}

\title{A Multiple Measurement Vector approach to Synthetic
  Aperture Radar Imaging} \author{Liliana Borcea \and Ilker Kocyigit
  \footnotemark[1]} \renewcommand{\thefootnote}{\fnsymbol{footnote}}
\footnotetext[1]{Department of Mathematics, University of Michigan,
  Ann Arbor, MI 48109-1043. \\\hspace{0.3in}Email: borcea@umich.edu \&
  ilkerk@umich.edu} \maketitle \date{today}
\begin{abstract}
We study a multiple measurement vector (MMV) approach to synthetic
  aperture radar (SAR) imaging of scenes with direction dependent
  reflectivity and with polarization diverse measurements. The
  unknown reflectivity is represented by a matrix with row support
  corresponding to the location of the scatterers in the scene, and
  columns corresponding to measurements gathered from different
  sub-apertures, or different polarization of the waves. The MMV
  methodology is used to estimate the reflectivity matrix by inverting
  in an appropriate sense the linear system of equations that models
  the SAR data.  We introduce a resolution analysis of  imaging with
  MMV, which takes into account the sparsity of the imaging scene, the
  separation of the scatterers and the diversity of the
  measurements. The results of the analysis are illustrated with some 
  numerical simulations.
\end{abstract}
\begin{keywords} 
synthetic aperture radar imaging, convex optimization,
multiple measurement vector, simultaneously sparse.
\end{keywords}

\begin{AMS}
35Q93, 58J90, 45Q05.
\end{AMS}

\section{{Introduction}}
\label{sect:intro}
Sparsity promoting optimization
\cite{donoho1989uncertainty,donoho1992signal,donoho2006compressed,
  bruckstein2009sparse,candes2005decoding,candes2006robust,candes2006near}
is an important methodology for imaging applications where scenes that
are sparse  in some representation can be reconstructed
with high resolution.  There is a large body of literature on this topic in
synthetic aperture radar imaging
\cite{baraniuk2007compressive,potter2010sparsity,fannjiang2013compressive},
sensor array imaging
\cite{chai2013robust,chai2014imaging,borceaKocyigit}, medical imaging
\cite{lustig2007sparse}, astronomy \cite{bobin2008compressed},
geophysics \cite{santosa1986linear},  and so on.

We are interested in the application of synthetic aperture radar (SAR)
imaging, where a transmit-receive antenna on a moving platform
probes an imaging scene with waves and records the scattered returns
\cite{curlander1991synthetic,cheney2009fundamentals}. This is a
particular inverse problem for the wave equation, where the waves
propagate through a homogeneous medium, back and forth between the
platform and the imaging scene, and the unknown is modeled as a
two-dimensional reflectivity function of location on a known imaging
surface. Most SAR imaging is based on a linear
model of the data, where the unknown reflectivity is represented by a
collection of independent point scatterers
\cite{cheney2009fundamentals}. The image is then formed by inverting
approximately this linear relation, using filtered backprojection or
matched filtering \cite{cheney2009fundamentals}, also known as
Kirchhoff migration \cite{Biondi}.  Such imaging is
popular because it is robust to noise, it is simple and works well
when the linear model is a good approximation of the data. However,
the resolution is limited by the extent of the  aperture, the
frequency and the bandwidth of the probing signals emitted by the
moving platform
\cite{curlander1991synthetic,cheney2009fundamentals}. The promise of
sparsity promoting optimization is that these resolution limits can be
overcome when the unknown reflectivity has sparse support
\cite{baraniuk2007compressive,potter2010sparsity,
  fannjiang2013compressive}.

The modeling of the reflectivity as a collection of points that
scatter the waves isotropically may lead to image artifacts.  It is
known that even if the scatterers are small, so that their support may
be represented by a point and the single scattering approximation
(i.e., the linear data model) can be used, their reflectivity may
depend on the frequency and the direction of illumination
\cite[Chapters 3, 5]{ammari2013mathematical}. Moreover, the scatterers
have an effective polarization tensor that describes their response to
different polarizations of the probing electromagnetic waves
\cite{ammari2013mathematical,ammari2007music}. Thus, the reflectivity function depends
on more variables than assumed in
conventional SAR, and the
resulting images may be worse than expected. For example, a scatterer
that reflects only within a narrow cone of incident angles cannot be
sensed over most of the synthetic aperture, so its reconstruction with
filtered backprojection will have low resolution.  Direct application
of sparse optimization methods does not give good results either,
because of the large systematic error in the linear data model that
assumes a scalar, constant reflectivity over the entire aperture.

SAR imaging of frequency-dependent reflectivities has been studied in
\cite{cheney2013imaging,sotirelis2012study,sotirelis2013frequency},
using either Doppler effects, or data segmentation over frequency
sub-bands. Data segmentation is a natural idea for imaging both
frequency and direction dependent reflectivities that are regular enough
so that they can be approximated as piecewise constant functions over
properly chosen frequency sub-bands and cones of angles of incidence
(i.e., sub-apertures). Images can be obtained separately from each
data set, but the question is how to fuse the information to
achieve better resolution.  The study in \cite{borcea2016synthetic}
uses the multiple measurement vector (MMV) methodology
\cite{malioutov2005, Chen2006MMV, VanDenBerg2010MMV}, also known as
simultaneously sparse approximation
\cite{TroppSimultaneous1,TroppSimultaneous2}, for this purpose. The
MMV framework fits here because the reflectivity is supported at the
same locations in the imaging scene, for each data set.  In the discrete setting, this
means that the unknown is represented by a matrix ${\itbf X}$ with row
support corresponding to the pixels in the image that contain
scatterers, and with columns corresponding to the different values of
the reflectivity for each frequency band, sub-aperture and polarization.

The goal of this paper is two-fold: First, we  introduce a novel resolution theory 
of imaging with MMV,  that applies to a general linear system. We do not pursue the usual 
question of exact recovery of the unknown matrix $\bX$, which requires stringent 
assumptions on the imaging scene that are unlikely to hold in practice. Instead, we 
estimate the neighborhood of the  row  support  of $\bX$ that contains the largest entries 
of the MMV reconstruction. The size of this neighborhood plays the role of resolution limit and 
we quantify its dependence on the sparsity of the imaging scene, the separation between the 
scatterers, the diversity of the data set and the noise level. The second goal of the paper is to explain how  
the  theory applies to SAR imaging. The  study \cite{borcea2016synthetic} is proof of concept 
that MMV can be used to image  direction dependent 
reflectivities from data gathered over multiple sub-apertures. However, 
it does  not provide a resolution analysis and it does not demonstrate the advantage of using  MMV 
over imaging with a single sub-aperture at a time. 
In this paper we quantify the improvement brought by the MMV approach
and assess the results of the resolution theory for  the application of  SAR imaging both direction and polarization dependent 
reflectivities.

The paper is organized as follows: We begin in section
\ref{sect:theory} with the theoretical results, stated for a general
linear system with unknown matrix ${\itbf X}$. The application of SAR 
imaging is discussed in sections  \ref{sect:applic1} and  \ref{sect:applic2}. The proofs of the
results  are in section \ref{sect:proofs}.  We end with a summary in
section \ref{sect:sum}.

\section{Theory}
\label{sect:theory}
We state here our main results on the resolution of imaging with
MMV. We begin in section \ref{sect:theory1} with a brief discussion on 
MMV, and then give the results in section \ref{sect:theory2}.

We use henceforth the following notation convention: Bold uppercase
letters, as in $\bX \in \CC^{N_\by \times N_v}$, denote matrices and
bold lowercase letters denote vectors. We also use an arrow index, as
in $\bx_{j\rightarrow}\in \CC^{1 \times N_v}$, to distinguish the rows
of $\bX$ from its column vectors denoted by $\bx_{j} \in \CC^{N_{\by}
  \times 1}$.
\subsection{Preliminaries}
\label{sect:theory1}
Consider a general linear model of a data matrix $\bD \in \CC^{N_{\br}
  \times N_v}$,
\begin{equation}
  \bG \bX = \bD,
  \label{eq:th1}
\end{equation}
where the unknown matrix $\bX \in \CC^{N_{\by} \times N_v}$ is mapped
to $\bD$ by a given sensing matrix $\bG \in \CC^{N_{\br}\times
  N_{\by}}$.  In the context of SAR imaging, $\bX$ is the unknown
reflectivity discretized\footnote{We assume that the $N_{\by}$ points
  define a fine mesh in $\Omega$, so we can neglect errors due to
  scatterer locations off the mesh.} at $N_{\by}$ points $\{\by_j\}_{1
  \le j \le N_{\by}}$ in the imaging region $\Omega$, a bounded set on
a known surface.  The matrix $\bD$ is an aggregate of $N_v$ data sets
or views, each consisting of $N_{\br}$ measurements of the wave 
at the moving radar antenna. The column 
$\bx_v$ of $\bX$ is the reflectivity for the $v$-th
view, and 
the sensing matrix $\bG$ is the discretization of the
kernel of the integral operator that defines the single
scattering  approximation of the wave,
as described in section \ref{sect:applic1}.  

Denote by $\cS \subset \{1, \ldots, N_{\by}\}$ the set of indexes of
the nonzero rows of $\bX$, and suppose that its cardinality $|\cS|$ is
small with respect to $ N_{\by}$. We call $\cS$ the row support of
$\bX$ and let $ \Omega_\cS = \{ \by_q, ~ q \in \cS\} $ be the set of
associated locations in $\Omega$.

When $N_v = 1$, the linear model \eqref{eq:th1} corresponds to the
single measurement vector (SMV) problem,
\begin{equation}
  \bG \bx = \bd,
  \label{eq:th2}
\end{equation}
with unknown vector $\bx \in \CC^{N_{\by}\times 1}$ and data vector
$\bd \in \CC^{N_{\br}\times 1}$, where we dropped the column index $1$. 
This problem has been studied extensively in the
context of compressed sensing
\cite{donoho1989uncertainty,donoho1992signal,donoho2006compressed,
  bruckstein2009sparse,candes2005decoding,candes2006robust,candes2006near,
  fannjiang2010compressed} for the undetermined case $N_{\br} \ll
N_{\by}$. In particular, it is known \cite[Corollary
  1]{donoho2003optimally} that if
\begin{equation}
  \|\bx\|_0 = |\cS| < { \mbox{spark}(\bG)}/{2},
  \label{eq:th4}
\end{equation}
where $\mbox{spark}(\bG)$ is the smallest
  number of linearly dependent columns of $\bG$, then 
\eqref{eq:th2} has a unique solution satisfying \eqref{eq:th4}, given
by the minimizer of the combinatorial  optimization problem
\begin{equation}
  \mbox{minimize} ~ \|\bz\|_0 ~ ~\mbox{subject to } ~
  \bG \bz = \bd.
  \label{eq:th3}
\end{equation}
The norm $\| \bz\|_0$ equals the number of nonzero entries in $\bz$.

This result is generalized in \cite[Theorem 2.4]{Chen2006MMV} to the
MMV problem \eqref{eq:th1} for $N_v > 1$. It states that when the
number of nonzero rows in $\bX$, denoted by $\|\bX\|_0$, satisfies
\begin{equation}
  \|\bX\|_0 < \big[ \mbox{spark}(\bG)+ \mbox{rank}({\bD})-1\big]/{2},
  \label{eq:th5}
\end{equation}
the linear system \eqref{eq:th1} has a unique solution satisfying
\eqref{eq:th5}, given by the minimizer of 
\begin{equation}
  \mbox{minimize} ~ \|\bZ\|_0 ~ ~\mbox{subject to } ~
  \bG \bZ = \bD.
  \label{eq:th6}
\end{equation}
Thus, if the different data sets bring new information, so that $\bD$
has large rank, the MMV problem is uniquely solvable for less
stringent conditions on the row support of $\bX$ i.e., for less sparse
imaging scenes.

The combinatorial problems \eqref{eq:th3} and \eqref{eq:th6} are not
computationally tractable, so they are replaced by convex
relaxations. The minimizer of the convex problem
\begin{equation}
  \mathscr{P}_1: ~ ~  \mbox{minimize} ~ \|\bz\|_1 ~ ~\mbox{subject to } ~
  \bG \bz = \bd,
  \label{eq:th7}
\end{equation}
where $\|\cdot \|_1$ is the $\ell_1$ norm, is known to give the exact
solution $\bx$ of  \eqref{eq:th2} under various
conditions satisfied by $\bG$ and $\bx$, like the null space property
\cite{Cohen09compressedsensing}, the restricted isometry property
\cite{candes2006robust}, conditions based on the mutual coherence
\cite{Donoho2006Uncertainty} and the cumulative coherence
\cite{tropp2004greed}. Relaxations of \eqref{eq:th6} of the form
\begin{equation}
   \mathscr{P}_{1,q}: ~ ~ \mbox{minimize} ~ \|\bZ\|_{1,q} ~
   ~\mbox{subject to } ~ \bG \bZ = \bD, \quad \mbox{where} ~ ~  \|\bZ\|_{1,q} = \sum_{j=1}^{N_{\by}} \|\bz_{j\rightarrow}\|_q,
  \label{eq:th8}
\end{equation}
are studied in
\cite{cotter2005sparse,malioutov2005,Chen2006MMV,Eldar2009Robust,
  TroppSimultaneous1,TroppSimultaneous2,VanDenBerg2010MMV}. Conditions of recoverability of  $\bX$  by the minimizer of 
\eqref{eq:th8} are established in \cite[Theorem 3.1]{Chen2006MMV} and
\cite[Theorem 5.1]{TroppSimultaneous1}. However, there are no
conclusive results that demonstrate the advantage of the MMV
formulation over the SMV one in the convex relaxation form, as
discussed for example in \cite[Section D]{Chen2006MMV}, \cite[Section
  5.2]{TroppSimultaneous1} and \cite[Section 3.2]{VanDenBerg2010MMV}.

These studies make no assumption on the structure of the unknown
$\bX$, except for sparsity of its row support $\cS$, and  do not address 
the case of more general imaging scenes where exact reconstructions of 
$\bX$ may not be achieved.  Our resolution theory quantifies  the error
of the reconstruction based on the separation between the points in $\Omega_\cS$, the 
correlation of the rows of $\bX$ and the noise level. We show in particular 
that if $\bX$ has uncorrelated rows,  the MMV formulation may have an advantage
over SMV.  This is relevant to SAR imaging, as explained in section \ref{sect:applic1}.

\subsection{Resolution theory}
\label{sect:theory2}
Let us consider the following modification of the linear system
\eqref{eq:th1}
\begin{equation}
  \bD_{\bW} = \bG \bX + \bW,
  \label{eq:th10}
\end{equation}
which accounts for data $\bD_{\bW}\in \CC^{N_{\br} \times N_v}$
contaminated by the noise matrix $\bW \in \CC^{N_{\br} \times
  N_v}$. We estimate $\bX$ by the minimizer $\bX^\ep$ of
the convex problem
\begin{equation}
  \mathscr{P}_{1,2}^\ep: ~~ \mbox{minimize} ~ \|\bZ\|_{1,2} ~
   ~\mbox{subject to } ~ \|\bG \bZ - \bD_{\bW}\|_{F} \le \ep,
  \label{eq:th11}
\end{equation}
where $\|\cdot \|_F$ is the Frobenius norm and $\ep$ is a chosen 
tolerance, satisfying
\begin{equation}
  \|\bW\|_{F} < \ep.
  \label{eq:th12}
\end{equation}

Our goal  is to quantify the approximation of $\bX$ by $\bX^\ep$, by taking into account 
the separation of the points in  $\Omega_\cS$ and the correlation of the rows of $\bX$.  These determine how the unknowns  interact with each other, 
as described by the  $\bX$ dependent "multiple view interaction coefficient'' $\cI_{N_v}$ 
defined in section \ref{sect:defINv}.  The smaller $\cI_{N_v}$ is, the 
 better the imaging results, as stated by the estimates in sections 
\ref{sect:suppX}--\ref{sect:orthogX}.  We also study 
in section \ref{sect:cluster} the case of clusters of points in $\Omega_\cS$, where $\cI_{N_v}$ is large
and the previous estimates  are not useful.  We introduce a new interaction 
coefficient for the cluster, which is much smaller than $\cI_{N_v}$, and show that when this 
is small, the MMV reconstruction is supported in the vicinity of $\Omega_\cS$.

\subsubsection{The multiple view interaction coefficient}
\label{sect:defINv}
The interaction between the unknowns is quantified by the $\bX$ dependent multiple view interaction coefficient defined by 
\begin{equation}
  \cI_{N_v} = \max_{1 \le j \le N_{\by}} \sup_{\bvr \in \CC^{1 \times
      N_v}} \sum_{q \in \cS \setminus\{n(j)\}} |\mu(\bg_j,\bg_q)|
  |\mu(\bvr,\bx_{q\rightarrow})|,
  \label{eq:th13}
\end{equation}
using the correlation of the columns of $\bG$,
\begin{equation}
  \mu(\bg_j,\bg_q) = \lin \bg_j,\bg_q \rin,
  \quad 1 \le j, q \le N_{\by},
    \label{eq:th14}
\end{equation}
where $\lin \bg_j,\bg_q \rin = \bg_j^\star \bg_q$ is the Hermitian
inner product, and $\star$ denotes complex conjugate and transpose. 
These columns are normalized, so that
\begin{equation}
  \label{eq:normg} 
\|\bg_j\|_2 = \lin \bg_j,\bg_j \rin^{1/2} = 1, \quad 1 \le j \le N_{\by},
\end{equation}
and we suppose that
\begin{equation}
\label{eq:th16b}
|\mu(\bg_j,\bg_q)| < 1, \quad \forall ~ j \ne q, ~ 1 \le j, q \le N_{\by}.
\end{equation}
This assumption holds in the SAR imaging application and it allows us
to quantify the distance between the points  using the
semimetric
\begin{equation}
  \cd:\{1, \ldots, N_{\by}\} \times \{1, \ldots, N_{\by}\}
  \to [0,1], \quad \cd(j,q) = 1 - |\mu(\bg_j,\bg_q)|.
  \label{eq:th16}
\end{equation}
 We will see in section \ref{sect:applic1} that
$|\mu(\bg_j,\bg_q)|$ is approximately a function of $\by_j-\by_q$,
which peaks at the origin i.e., for $\by_j = \by_q$, and decreases
monotonically in the vicinity of the peak. Thus, points at small
distance with respect to $\cd$ are also close in the Euclidian
distance.

We use the semimetric $\cd$ in definition \eqref{eq:th13} to select
the closest point to $\by_j$ in
$\Omega_\cS$, indexed by $n(j) \in \cS$.  If this point is not unique,  
we just pick one and let $n(j)$ be its index. In an abuse of notation, we also let
$\mu(\cdot, \cdot)$ be the correlation of the rows of $\bX$ with
$\bvr$, defined by
\begin{equation}
  \mu(\bvr,\bx_{q \rightarrow}) = \frac{ \lin \bvr,\bx_{q \rightarrow}
    \rin}{ \|\bvr \|_2 \|\bx_{q \rightarrow}\|_2},
    \label{eq:th15}
\end{equation}
where $\lin \bvr, \bx_{q \rightarrow} \rin = \bvr\bx_{q
  \rightarrow}^\star $ is the Hermitian inner product of row vectors
and $\|\cdot \|_2$ is the induced $\ell_2$ norm.

Note that \eqref{eq:th15} has absolute value equal to
$1$ in the SMV setting, where $N_v = 1$ and $\bvr$ and $\bx_{k
  \rightarrow}$ are complex numbers. Then, \eqref{eq:th13} reduces to
the single view interaction coefficient $\cI_{1}$ used in \cite[Section
  4]{borceaKocyigit} to quantify the quality of imaging
 with $\ell_1$ optimization. As shown in \cite{borceaKocyigit}, 
$\cI_1$ is small if the points in $\Omega_\cS$ are sufficiently far apart. Here we consider $N_v >1$,
and note that since $|\mu (\bvr,\bx_{k \rightarrow})| \le 1$, we have $\cI_{N_v} \le \cI_1$. 
In section  \ref{sect:orthogX} we show that depending on the correlation of the rows of $\bX$, 
we may have $\cI_{N_v} \ll \cI_1$. The resolution estimates below 
show an advantage of using MMV in such cases.


\subsubsection{Estimation of the support of $\bX$}
\vspace{0.05in} \label{sect:suppX}

The next theorem, proved in section \ref{sect:proof1}, shows that when
$\cI_{N_v}$  and the noise level $\ep$ are small,  
the large entries in $\bX^\ep$  are supported at points near
$\Omega_\cS$. 
\vspace{0.05in}
\begin{theorem}
\label{thm:1}
Consider the matrix $ \bW^\ep = \bG (\bX^\ep - \bX), $ defined in
terms of the unknown $\bX$ and its reconstruction $\bX^\ep$,
the minimizer of \eqref{eq:th11}.  This matrix cannot be computed 
but it is guaranteed to satisfy 
\begin{equation}
    \label{eq:th19p}
    \|\bW^\ep\|_{F} \le 2 \ep.
\end{equation}
Suppose that there exists $r \in (0,1)$ so that $2 \cI_{N_v} < r < 1,$
and define the set
\begin{equation*}
    \cB_r(\cS) = \{ 1 \le j \le N_{\by} ~ \mbox{such that} ~ \exists
    \, q \in \cS ~ \mbox{satisfying} ~ \cd(j,q) < r \},
\end{equation*}
called the $r$--vicinity of $\cS$ with respect to the semimetric
$\cd$.  If we decompose the reconstruction in two parts
\begin{equation}
    \bX^\ep = \bX^{\ep,r} + \bE^{\ep,r},
    \label{eq:th18}
\end{equation}
whith $\bX^{\ep,r}$ row supported in $\cB_r(\cS)$ and $\bE^{\ep,r}$  
row supported in  the complement $\{1, \ldots, N_{\by}\} \setminus \cB_r(\cS)$, we have
\begin{align}
    \|\bE^{\ep,r}\|_{1,2} \le \frac{2 \cI_{N_v}}{r} \|\bX^\ep\|_{1,2} +
    \frac{1}{r} \big\| \big(\bG^\star \bW^\ep\big)_{\cS \rightarrow}
    \big\|_{1,2} \le \frac{2 \cI_{N_v}}{r} \|\bX^\ep\|_{1,2} + \frac{2 \ep |\cS|}{r},
    \label{eq:th19}
\end{align}
where $\bG^\star \in \CC^{N_{\by} \times N_{\br}}$ is the Hermitian
adjoint of $\bG$ and $\big(\bG^\star \bW^\ep\big)_{\cS \rightarrow} \in
\CC^{|\cS| \times N_{v}}$ is the restriction of the matrix
$\bG^\star \bW^\ep$ to the rows indexed by the entries in $\cS$. 
\end{theorem}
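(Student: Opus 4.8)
The two bounds in \eqref{eq:th19p} and the last inequality of \eqref{eq:th19} are routine, so I would settle them first. Since $\bX^\ep$ is feasible for $\mathscr{P}_{1,2}^\ep$ and $\bX$ satisfies $\|\bG\bX-\bD_\bW\|_F=\|\bW\|_F<\ep$ by \eqref{eq:th12}, writing $\bW^\ep=(\bG\bX^\ep-\bD_\bW)-(\bG\bX-\bD_\bW)$ and using the triangle inequality yields $\|\bW^\ep\|_F\le 2\ep$. For the noise term, the $q$-th row of $\bG^\star\bW^\ep$ is $\bg_q^\star\bW^\ep$, so $\|(\bG^\star\bW^\ep)_{q\rightarrow}\|_2\le\|\bg_q\|_2\,\|\bW^\ep\|_F=\|\bW^\ep\|_F\le 2\ep$ by the normalization \eqref{eq:normg}, and summing over $q\in\cS$ gives $\|(\bG^\star\bW^\ep)_{\cS\rightarrow}\|_{1,2}\le 2\ep|\cS|$.

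For the first inequality of \eqref{eq:th19}, the plan is to use that $\bX^\ep$ minimizes $\|\cdot\|_{1,2}$ over the feasible set of $\mathscr{P}_{1,2}^\ep$, by comparing it against a competitor that retains the near part $\bX^{\ep,r}$ and trades the far part $\bE^{\ep,r}$ for a correction $\bP$ that is row-supported on $\cS$ and chosen so that $\bG\bP$ stays close to $\bG\bE^{\ep,r}$ (for instance $\bG\bP$ the best approximation of $\bG\bE^{\ep,r}$ in $\mbox{span}\{\bg_q:q\in\cS\}$, possibly tilted along the residual $\bG\bX^\ep-\bD_\bW$ so as to remain feasible, exploiting the strict slack $\|\bW\|_F<\ep$). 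Because $\bX^{\ep,r}$ and $\bE^{\ep,r}$ have disjoint row supports, $\|\bX^\ep\|_{1,2}=\|\bX^{\ep,r}\|_{1,2}+\|\bE^{\ep,r}\|_{1,2}$, so minimality of $\bX^\ep$ against the feasible competitor $\bX^{\ep,r}+\bP$ gives $\|\bE^{\ep,r}\|_{1,2}\le\|\bP\|_{1,2}$, and everything reduces to estimating $\|\bP\|_{1,2}$.

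To bound $\|\bP\|_{1,2}$ I would work from the identity $\bG^\star\bW^\ep=\bG^\star\bG(\bX^\ep-\bX)$, used rowwise via $(\bG^\star\bG\bZ)_{j\rightarrow}=\sum_q\mu(\bg_j,\bg_q)\bz_{q\rightarrow}$ together with $\mu(\bg_j,\bg_j)=1$. Restricting this relation to the rows indexed by $\cS$ should let me express $\bP$ through the $\cS$-rows of $\bG^\star\bW^\ep$ (whence the term $\|(\bG^\star\bW^\ep)_{\cS\rightarrow}\|_{1,2}$), the coupling $\sum_{j\notin\cB_r(\cS)}\mu(\bg_q,\bg_j)(\bX^\ep)_{j\rightarrow}$ between $\cS$ and the far support, and the couplings internal to $\cS$. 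For the far coupling I would invoke the separation bound $|\mu(\bg_j,\bg_q)|=1-\cd(j,q)\le 1-r$ valid for $j\notin\cB_r(\cS)$ and $q\in\cS$, and organize the sum over the far support by sending each far index $j$ to its nearest scatterer $n(j)$; this is the mechanism producing the factor $r$ in front of $\|\bE^{\ep,r}\|_{1,2}$. The couplings internal to $\cS$ are exactly what the multiple view interaction coefficient $\cI_{N_v}$ of \eqref{eq:th13} is built to control, the supremum over $\bvr$ absorbing the unknown row directions of $\bX^\ep$ and yielding the term $2\cI_{N_v}\|\bX^\ep\|_{1,2}$. Putting the estimates together should give $r\|\bE^{\ep,r}\|_{1,2}\le 2\cI_{N_v}\|\bX^\ep\|_{1,2}+\|(\bG^\star\bW^\ep)_{\cS\rightarrow}\|_{1,2}$, which is \eqref{eq:th19} after dividing by $r$.

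The main obstacle I anticipate is the feasibility of the competitor: the far columns $\{\bg_j\}_{j\notin\cB_r(\cS)}$ are only $r$-separated from, not orthogonal to, $\mbox{span}\{\bg_q:q\in\cS\}$, so $\bG(\bP-\bE^{\ep,r})$ cannot be driven to zero, and one must argue --- using the slack $\|\bW\|_F<\ep$, a rescaling of the competitor by a factor in $(0,1]$, or the Lagrangian/saddle-point reformulation of $\mathscr{P}_{1,2}^\ep$ --- that $\|\bG(\bX^{\ep,r}+\bP)-\bD_\bW\|_F\le\ep$ nonetheless. The second delicate point is the bookkeeping of the cross terms, so that precisely the coefficients $2\cI_{N_v}/r$ and $1/r$ emerge, the true reflectivity $\bX$ appears only through $\cI_{N_v}$, and only the $\cS$-rows of $\bG^\star\bW^\ep$, not all of them, survive in the noise term.
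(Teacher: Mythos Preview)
Your treatment of $\|\bW^\ep\|_F\le 2\ep$ and of the bound $\|(\bG^\star\bW^\ep)_{\cS\rightarrow}\|_{1,2}\le 2\ep|\cS|$ matches the paper exactly.

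For the main inequality, however, your route diverges from the paper's and the obstacle you flag is real and, as you have set things up, not removable. The paper never constructs a competitor of the form $\bX^{\ep,r}+\bP$; minimality is invoked only once, and only against the true $\bX$, giving $\|\bX^\ep\|_{1,2}\le\|\bX\|_{1,2}$. The rest of the argument is run through a fixed linear functional
\[
\cL(\bV)=\mathrm{tr}\big[(\bG\widehat\bX)^\star\bV\big],\qquad \widehat\bx_{q\rightarrow}=\bx_{q\rightarrow}/\|\bx_{q\rightarrow}\|_2,\ q\in\cS,
\]
for which one proves the four bounds $|\cL(\bV)|\le\|(\bG^\star\bV)_{\cS\rightarrow}\|_{1,2}$, $(1-\cI_{N_v})\|\bX\|_{1,2}\le|\cL(\bG\bX)|$, $|\cL(\bG\bX^{\ep,r})|\le(1+\cI_{N_v})\|\bX^{\ep,r}\|_{1,2}$, and $|\cL(\bG\bE^{\ep,r})|\le(1-r+\cI_{N_v})\|\bE^{\ep,r}\|_{1,2}$. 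Plugging these into the identity $\cL(\bG\bX)=\cL(\bG\bX^{\ep,r})+\cL(\bG\bE^{\ep,r})-\cL(\bW^\ep)$, then using $\|\bX^\ep\|_{1,2}\le\|\bX\|_{1,2}$ and $\|\bX^\ep\|_{1,2}=\|\bX^{\ep,r}\|_{1,2}+\|\bE^{\ep,r}\|_{1,2}$, collapses directly to $r\|\bE^{\ep,r}\|_{1,2}\le 2\cI_{N_v}\|\bX^\ep\|_{1,2}+\|(\bG^\star\bW^\ep)_{\cS\rightarrow}\|_{1,2}$. The separation bound $|\mu(\bg_{n(j)},\bg_j)|\le 1-r$ for $j\notin\cB_r(\cS)$ and the definition of $\cI_{N_v}$ enter precisely in the third and fourth of the four bounds above, and the supremum over $\bvr$ in \eqref{eq:th13} is what absorbs the unknown row directions of $\bX^{\ep,r}$ and $\bE^{\ep,r}$, not of $\bX$.

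Your competitor approach stalls at exactly the point you identify: with $\bP$ supported on $\cS$, the residual $\bG(\bP-\bE^{\ep,r})$ lies (at best) in the orthogonal complement of $\mathrm{range}(\bG_\cS)$ and its Frobenius norm is of order $\|\bE^{\ep,r}\|_{1,2}$, not $\ep$, so feasibility of $\bX^{\ep,r}+\bP$ cannot be rescued by the slack $\ep-\|\bW\|_F$, by rescaling, or by a Lagrangian reformulation without an additional quantitative hypothesis that the theorem does not assume. The paper's linear-functional device sidesteps this entirely, because it never needs a second feasible point besides $\bX$ itself.
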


\vspace{0.05in} We may think of $\bE^{\ep,r}$ as an
error in the reconstruction, because its rows are supported away
from $\cS$. The theorem says that this error is small when the multiple interaction 
coefficient and the noise level are small. The estimate of the noise effect in the second bound 
in \eqref{eq:th19} is pessimistic.  In the numerical simulations we found that 
$\big\| \big(\bG^\star \bW^\ep\big)_{\cS \rightarrow} \big\|_{1,2}$ is
typically much smaller than $2 \ep |\cS|$.

\subsubsection{Quantitative estimation of $\bX$}
\label{sect:quant}
Theorem \ref{thm:1} says that if we threshold the entries in $\bX^\ep$ at a value commensurate to the right hand side in  \eqref{eq:th19}, we obtain the approximation $\bX^{\ep,r}$ with row support $\cS^\ep \subset \cB_r(\cS)$. 
Here we quantify how well  
$\bX^{\ep,r}$ approximates $\bX$.  Because $\cS$ and $\cS^\ep$ are
different sets in general, an estimate of some norm of $\bX^{\ep,r}-\bX$ is not 
useful. Instead, we decompose $\bX^{\ep,r}$ in 
one part supported in $\cS$ that we compare with $\bX$ in Theorem \ref{thm:2}, and a residual. 

Let  $\bG_{\cS} = (\bg_j)_{j \in \cS}$ be the $N_{\br} \times
|\cS|$ matrix obtained by restricting the columns of $\bG$ to the
indexes in $\cS$. Suppose that $\bG_{\cS}$ has linearly independent
columns, as otherwise it is impossible to recover $\bX$ even with
noiseless data, and introduce its pseudoinverse
\begin{equation}
  \bG_\cS^\dagger = (\bG_\cS^\star \bG_\cS)^{-1} \bG_\cS^*.
  \label{eq:th20}
\end{equation}
Decompose $\bX^{\ep,r}$   in two parts
\begin{equation}
  \bX^{\ep,r} = \bxi^{\ep,r} + \bEr^{\ep,r},
\label{eq:th21}
\end{equation}
where $\bxi^{\ep,r}$ has row support in $\cS$ and its restriction to the
rows indexed by $\cS$ satisfies
\begin{equation}
  \bxi^{\ep,r}_{\cS \rightarrow} = \bG_\cS^\dagger \bG \bX^{\ep,r}.
  \label{eq:th22}
\end{equation}
This definition gives that 
\begin{align}
  \bG_\cS^\dagger \bG \bX^{\ep,r} = (\bG_\cS^\star \bG_\cS)^{-1}
  \bG_\cS^* \Big( \bG_\cS \bxi^{\ep,r}_{\cS \rightarrow} + \bG \bEr^{\ep,r}
  \Big) = \bxi^{\ep,r}_{\cS \rightarrow} + (\bG_\cS^\star \bG_\cS)^{-1}
  \bG_\cS^* \bG \bEr^{\ep,r},
  \label{eq:calcul}
\end{align}
so the residual $\bEr^{\ep,r}$ satisfies
\begin{equation}
  \bG_\cS^* \bG \bEr^{\ep,r} = 0.
  \label{eq:th23}
\end{equation}
That is to say, the columns of $\bG \bEr^{\ep,r}$ are orthogonal to
the range of $\bG_\cS$.  Note that $\bEr^{\ep,r}$ has row
support in $\cS \cup \cS^\ep$. If $\cS^\ep$ were the same as
$\cS$, then \eqref{eq:th23} would imply that $\bEr^{\ep,r} = 0$. Thus,
$\bEr^{\ep,r}$ is a residual that accounts for
$\bX^{\ep,r}$ not having the exact support $\cS$.

The next theorem, proved in section  \ref{sect:proof2},
shows that under the same conditions as in Theorem \ref{thm:1},  the matrix $\bxi^{\ep,r}$ is a good
approximation of the unknown $\bX$. However, $\bxi^{\ep,r}$ cannot be computed directly, so we 
need to relate it to $\bX^{\ep,r}$. To do so, we introduce an ``effective matrix''  supported in 
$\cS$, obtained by local aggregation of the rows of $\bX^{\ep,r}$. We show that $\bxi^{\ep,r}$ is close to 
to this matrix if  the single view interaction coefficient $\cI_{1}$  is small. This reveals the fact that 
while $\cI_{N_v} \ll \cI_{1}$  brings an improved support of the MMV reconstruction vs. that of  SMV, 
the quantitative estimate of $\bX$ cannot be expected to be better.

\vspace{0.05in}
\begin{theorem}
\label{thm:2}
Let $\bX^{\ep,r}$ and $\bxi^{\ep,r}$ be defined as in \eqref{eq:th18} and
\eqref{eq:th21}. Then, 
\begin{equation}
  \|\bxi^{\ep,r} - \bX\|_{1,2} 
  \le \frac{2 \cI_{N_v}}{r}
  \|\bX^\ep\|_{1,2} + \frac{6 \ep |\cS|}{r}.
  \label{eq:th27}
\end{equation}
Moreover, if the support of $\bX^{\ep,r}$ is 
decomposed in $|\cS|$ disjoint parts,  each
corresponding to a point in $\cS$, 
\begin{equation}
  \label{eq:th24}
  \cS^\ep = \bigcup_{j \in \cS} \cS^\ep_j, \quad
  \cS^\ep_j = \{ q \in \cS^\ep ~ \mbox{such that} ~ \cd(q,j)
  \le \cd(q,j'), ~ \forall ~ j' \in \cS\}, \quad  j \in
  \cS,
\end{equation}
and we define  the effective matrix $\overline{\bX^{\ep,r}}$ with row support in $\cS$ and 
 entries 
\begin{equation}
  \label{eq:th25}
  \overline{\bX^{\ep,r}_{j,v}} = \left\{ \begin{array}{ll}
    \displaystyle \sum_{l \in \cS^\ep_j} 
    \mu(\bg_j,\bg_l)\bX^{\ep,r}_{l,v}, \quad &\mbox{if} ~ j \in \cS, \\ 0,
    &\mbox{otherwise},
  \end{array} \right. \quad \mbox{for} ~ ~1 \le j \le N_{\by}, ~ 1 \le v \le
  N_v,
\end{equation}
we have the estimate 
\begin{equation}
    (1-\cI_{1})\| \bxi^{\ep,r} - \overline{\bX^{\ep,r}}\|_{1,1} \le 2
  \cI_{1}\|\bX^{\ep,r}\|_{1,1}.
    \label{eq:th26}
\end{equation}
\end{theorem}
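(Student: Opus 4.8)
The plan is to work throughout with the restriction of $\bxi^{\ep,r}$ to its support $\cS$. Multiplying \eqref{eq:th22} by $\bG_\cS^\star\bG_\cS$ gives the master identity
\begin{equation}
  \bG_\cS^\star\bG_\cS\,\bxi^{\ep,r}_{\cS\rightarrow}\;=\;\bG_\cS^\star\bG\bX^{\ep,r},
  \label{eq:pf-master}
\end{equation}
and I would write $\bG_\cS^\star\bG_\cS=I+\mathbf{M}$, where $\mathbf{M}$ is the hollow matrix of correlations $\mathbf{M}_{jq}=\mu(\bg_j,\bg_q)$ for $j\ne q$ in $\cS$, so that \eqref{eq:th16b} makes $\bG_\cS^\star\bG_\cS$ a genuine ``near identity''. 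I would prove \eqref{eq:th26} first, since it is a self-contained computation, and then reduce \eqref{eq:th27} to the machinery behind Theorem \ref{thm:1}.

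For \eqref{eq:th26}: for $j\in\cS$ I would expand the $j$-th row $(\bG_\cS^\star\bG\bX^{\ep,r})_{j\rightarrow}=\sum_{l\in\cS^\ep}\mu(\bg_j,\bg_l)\bX^{\ep,r}_{l\rightarrow}$ along the Voronoi partition $\cS^\ep=\bigcup_{j'\in\cS}\cS^\ep_{j'}$ of \eqref{eq:th24}, separating off the cell $\cS^\ep_j$, whose contribution is exactly the row $\overline{\bX^{\ep,r}}_{j\rightarrow}$ of \eqref{eq:th25}. Equating this with the $j$-th row $\bxi^{\ep,r}_{j\rightarrow}+\sum_{q\in\cS\setminus\{j\}}\mu(\bg_j,\bg_q)\bxi^{\ep,r}_{q\rightarrow}$ of the left side of \eqref{eq:pf-master} gives
\begin{equation}
  \bxi^{\ep,r}_{j\rightarrow}-\overline{\bX^{\ep,r}}_{j\rightarrow}=\sum_{l\in\cS^\ep\setminus\cS^\ep_j}\mu(\bg_j,\bg_l)\bX^{\ep,r}_{l\rightarrow}-\sum_{q\in\cS\setminus\{j\}}\mu(\bg_j,\bg_q)\bxi^{\ep,r}_{q\rightarrow},\qquad j\in\cS.
  \label{eq:pf-rowid}
\end{equation}
Taking $\|\cdot\|_1$ of each row of \eqref{eq:pf-rowid} and summing over $j\in\cS$, the first double sum is at most $\sum_{l\in\cS^\ep}\|\bX^{\ep,r}_{l\rightarrow}\|_1\sum_{j\in\cS,\,l\notin\cS^\ep_j}|\mu(\bg_j,\bg_l)|\le\cI_1\|\bX^{\ep,r}\|_{1,1}$, because for each $l$ the inner sum runs over $\cS$ minus a $\cd$--nearest point of $\cS$ to $l$, hence is bounded by the defining supremum in \eqref{eq:th13} evaluated at $N_v=1$ and at the index $l$; and the second double sum is at most $\sum_{q\in\cS}\|\bxi^{\ep,r}_{q\rightarrow}\|_1\sum_{j\in\cS\setminus\{q\}}|\mu(\bg_j,\bg_q)|\le\cI_1\|\bxi^{\ep,r}\|_{1,1}$, using $n(q)=q$ for $q\in\cS$. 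Since the cells partition $\cS^\ep$ and $|\mu|\le1$, we also have $\|\overline{\bX^{\ep,r}}\|_{1,1}\le\|\bX^{\ep,r}\|_{1,1}$, hence $\|\bxi^{\ep,r}\|_{1,1}\le\|\bxi^{\ep,r}-\overline{\bX^{\ep,r}}\|_{1,1}+\|\bX^{\ep,r}\|_{1,1}$. Substituting and rearranging yields $(1-\cI_1)\|\bxi^{\ep,r}-\overline{\bX^{\ep,r}}\|_{1,1}\le 2\cI_1\|\bX^{\ep,r}\|_{1,1}$, which is \eqref{eq:th26}.

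For \eqref{eq:th27}: subtracting $\bG_\cS^\star\bG_\cS\bX_{\cS\rightarrow}=\bG_\cS^\star\bG\bX$ (valid since $\bX$ is row supported in $\cS$) from \eqref{eq:pf-master} and using $\bX^{\ep,r}=\bX^\ep-\bE^{\ep,r}$ and $\bG(\bX^\ep-\bX)=\bW^\ep$, I obtain $(I+\mathbf{M})(\bxi^{\ep,r}_{\cS\rightarrow}-\bX_{\cS\rightarrow})=(\bG^\star\bW^\ep)_{\cS\rightarrow}-(\bG^\star\bG\bE^{\ep,r})_{\cS\rightarrow}$. Since $\bE^{\ep,r}$ vanishes on $\cS$, this is equivalent to the more transparent splitting $\bxi^{\ep,r}_{\cS\rightarrow}-\bX_{\cS\rightarrow}=(\bX^\ep_{\cS\rightarrow}-\bX_{\cS\rightarrow})-\bEr^{\ep,r}_{\cS\rightarrow}$, and I would estimate the two terms by adapting the proof of Theorem \ref{thm:1} (section \ref{sect:proof1}). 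Starting from the minimality $\|\bX^\ep\|_{1,2}\le\|\bX\|_{1,2}$ and the near-diagonality of $\bG^\star\bG$, the cross--terms that arise in controlling $\|\bX^\ep_{\cS\rightarrow}-\bX_{\cS\rightarrow}\|_{1,2}$ are exactly those of \eqref{eq:th13} with $n(j)=j$ on $\cS$ (bounded by $\cI_{N_v}$), while the rows outside $\cB_r(\cS)$ enter only through correlations of magnitude at most $1-r$; this produces the $\tfrac{2\cI_{N_v}}{r}\|\bX^\ep\|_{1,2}$ term together with part of the noise. The residual $\bEr^{\ep,r}_{\cS\rightarrow}$ I would extract from \eqref{eq:th23} restricted to $\cS$: since $\bEr^{\ep,r}=\bX^{\ep,r}$ on $\cB_r(\cS)\setminus\cS$, that relation expresses $(I+\mathbf{M})\bEr^{\ep,r}_{\cS\rightarrow}$ through $\bG_\cS^\star\bG$ applied to the near-$\cS$ rows of $\bX^{\ep,r}$, and feeding in \eqref{eq:th19} keeps it within the same budget. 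Collecting the three $\ep|\cS|$ contributions — from $\bW^\ep$, from the $\bE^{\ep,r}$ leakage, and from the pseudoinverse correction — gives the announced $\tfrac{6\ep|\cS|}{r}$. The main obstacle is the one already present in Theorem \ref{thm:1}: no restricted-isometry hypothesis is imposed on $\bG_\cS$, so the self-interaction carried by $\mathbf{M}$ cannot be absorbed through an eigenvalue bound and must instead be controlled through $\cI_{N_v}$ and the separation parameter $r$; accordingly I would reuse the estimates of section \ref{sect:proof1} rather than reproduce them.
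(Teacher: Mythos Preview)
Your argument for \eqref{eq:th26} is correct and is essentially the paper's computation, just organized rowwise rather than as two matrix pieces. The paper writes the same master identity as
\[
(\bxi^{\ep,r}-\overline{\bX^{\ep,r}})_{\cS\rightarrow}-({\itbf I}-\bG_\cS^\star\bG_\cS)(\bxi^{\ep,r}-\overline{\bX^{\ep,r}})_{\cS\rightarrow}=\bG_\cS^\star\bG(\bX^{\ep,r}-\overline{\bX^{\ep,r}}),
\]
bounds $\|\bG_\cS^\star\bG(\bX^{\ep,r}-\overline{\bX^{\ep,r}})\|_{1,1}\le\cI_1\|\bX^{\ep,r}-\overline{\bX^{\ep,r}}\|_{1,1}$ and $\|({\itbf I}-\bG_\cS^\star\bG_\cS)(\bxi^{\ep,r}-\overline{\bX^{\ep,r}})\|_{1,1}\le\cI_1\|\bxi^{\ep,r}-\overline{\bX^{\ep,r}}\|_{1,1}$, and finishes with the same triangle inequality and $\|\overline{\bX^{\ep,r}}\|_{1,1}\le\|\bX^{\ep,r}\|_{1,1}$ that you use. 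Your row identity \eqref{eq:pf-rowid} is just these two bounds unpacked.

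Your plan for \eqref{eq:th27}, however, has a real gap. You propose to split $\bxi^{\ep,r}_{\cS\rightarrow}-\bX_{\cS\rightarrow}=(\bX^\ep_{\cS\rightarrow}-\bX_{\cS\rightarrow})-\bEr^{\ep,r}_{\cS\rightarrow}$ and bound each piece, but neither piece is controlled by the proof of Theorem~\ref{thm:1}: that proof bounds only $\|\bE^{\ep,r}\|_{1,2}$, the part of $\bX^\ep$ supported \emph{outside} $\cB_r(\cS)$, and says nothing about $\|\bX^\ep_{\cS\rightarrow}-\bX_{\cS\rightarrow}\|_{1,2}$. Likewise, extracting $\bEr^{\ep,r}_{\cS\rightarrow}$ from $(I+\mathbf{M})\bEr^{\ep,r}_{\cS\rightarrow}=-\bG_\cS^\star\bG_{\cS^\ep\setminus\cS}\bX^{\ep,r}_{(\cS^\ep\setminus\cS)\rightarrow}$ would require inverting $I+\mathbf{M}$, which you yourself flag as unavailable; and the right-hand side involves $\bX^{\ep,r}$, not $\bE^{\ep,r}$, so feeding in \eqref{eq:th19} does not help. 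Even if you pushed this through at the level of $\cI_1$, you would not recover the $\cI_{N_v}$ dependence the theorem claims, because $\cI_{N_v}$ is tied to the specific rows $\bx_{q\rightarrow}$ of the true $\bX$, and your row-by-row estimates only see $|\mu(\bg_j,\bg_q)|$.

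The device you are missing is the linear functional $\cL$ of Lemma~\ref{lem:1}. The paper applies the analogue of \eqref{eq:Pf3} to $\bX-\bxi^{\ep,r}$ (row supported in $\cS$), uses $\bG(\bX-\bxi^{\ep,r})=\bG\bE^{\ep,r}+\bG\bEr^{\ep,r}-\bW^\ep$, and then invokes \eqref{eq:Pf6} with $\bV=\bG\bEr^{\ep,r}-\bW^\ep$. The point is that $(\bG^\star\bG\bEr^{\ep,r})_{\cS\rightarrow}=\bG_\cS^\star\bG\bEr^{\ep,r}=0$ by \eqref{eq:th23}, so the entire $\bEr^{\ep,r}$ contribution vanishes inside $\cL$ and one is left with $(1-\cI_{N_v})\|\bX-\bxi^{\ep,r}\|_{1,2}\le(1-r+\cI_{N_v})\|\bE^{\ep,r}\|_{1,2}+\|(\bG^\star\bW^\ep)_{\cS\rightarrow}\|_{1,2}$. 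Now \eqref{eq:th19} applies directly to $\|\bE^{\ep,r}\|_{1,2}$, and the arithmetic $2\cI_{N_v}<r<1$ turns this into \eqref{eq:th27}. The orthogonality \eqref{eq:th23} is thus used not to \emph{solve} for $\bEr^{\ep,r}_{\cS\rightarrow}$ but to make $\bEr^{\ep,r}$ invisible to $\cL$; that is what lets the argument avoid inverting $I+\mathbf{M}$.
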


Note that because $\mu(\bg_j,\bg_l)$  are complex valued, there may be cancellations in the 
local aggregation  \eqref{eq:th25} of the entries of $\bX^{\ep,r}$. Only if  the set 
$\cS^\ep_j$ is small, so that $\cd(j,l) \ll 1$ for $l \in \cS^\ep_j$, we have $\mu(\bg_j,\bg_l) \approx 1$
and \eqref{eq:th25} is approximately the local sum of the entries in $\bX^{\ep,r}$. 
%

\subsubsection{Matrices $\bX$ with orthogonal rows}
\label{sect:orthogX}
We now show that if the unknown matrix $\bX$ has
orthogonal rows\footnote{The results extend to nearly orthogonal rows, 
but to simplify the proof we assume orthogonality.} (i.e., uncorrelated), then the multiple view interaction coefficient
$\cI_{N_v}$ may be much smaller than the interaction coefficient
$\cI_1$. By Theorem \ref{thm:1}, this means
that the MMV approach can give improved estimates of the row support $\cS$
of $\bX$, under less stringent conditions  than in the SMV formulation.

\vspace{0.05in}
\begin{proposition}
\label{prop:1}
Suppose that  $\bX \in \CC^{N_{\by} \times N_v}$ has
row support in the set $\cS$ with cardinality $1 < |\cS| \le N_v$, and
that its nonzero rows are orthogonal. Then, the multiple view
interaction coefficient \eqref{eq:th13} is given by
\begin{equation}
  \cI_{N_v} = \max_{1 \le j \le N_{\by}} \sqrt{\sum_{q \in \cS
      \setminus \{n(j)\}} |\mu(\bg_j,\bg_q)|^2}.
  \label{eq:Prop1}
\end{equation}
\end{proposition}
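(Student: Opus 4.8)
The plan is to evaluate, for each fixed index $j$, the inner supremum over $\bvr \in \CC^{1 \times N_v}$ in the definition \eqref{eq:th13}, and then take the maximum over $j$. Fix $1 \le j \le N_{\by}$ and abbreviate $c_q = |\mu(\bg_j,\bg_q)| \ge 0$ for $q \in \cS \setminus \{n(j)\}$, a nonempty index set since $|\cS| > 1$. Because $\mu(\bvr,\bx_{q\rightarrow})$ in \eqref{eq:th15} is invariant under nonzero rescaling of $\bvr$, I may restrict to unit vectors $\bvr$, and I write $\wh\bx_q = \bx_{q\rightarrow}/\|\bx_{q\rightarrow}\|_2$ for the normalized nonzero rows of $\bX$. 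The orthogonality hypothesis, together with $|\cS| \le N_v$, ensures that $\{\wh\bx_q : q \in \cS\}$ is an orthonormal family in $\CC^{1\times N_v}$, so that the inner supremum becomes $\sup_{\|\bvr\|_2 = 1} \sum_{q \in \cS\setminus\{n(j)\}} c_q\, |\lin \bvr,\wh\bx_q\rin|$.

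For the upper bound I would apply the Cauchy--Schwarz inequality to this finite sum and then Bessel's inequality for the orthonormal system $\{\wh\bx_q : q \in \cS\}$, namely
\[
\sum_{q \in \cS\setminus\{n(j)\}} c_q\, |\lin \bvr,\wh\bx_q\rin|
\le \Big( \sum_{q \in \cS\setminus\{n(j)\}} c_q^2 \Big)^{1/2}
\Big( \sum_{q \in \cS} |\lin \bvr,\wh\bx_q\rin|^2 \Big)^{1/2}
\le \Big( \sum_{q \in \cS\setminus\{n(j)\}} c_q^2 \Big)^{1/2}.
\]
For the matching lower bound, if all the $c_q$ vanish there is nothing to prove; otherwise I would exhibit the maximizer explicitly by taking $\bvr = \big( \sum_{q'} c_{q'}^2 \big)^{-1/2} \sum_{q \in \cS\setminus\{n(j)\}} c_q \wh\bx_q$, which has unit norm by orthonormality and belongs to $\CC^{1\times N_v}$ since $|\cS\setminus\{n(j)\}| = |\cS|-1 < N_v$. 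For this choice $\lin \bvr,\wh\bx_q\rin = c_q\big( \sum_{q'} c_{q'}^2 \big)^{-1/2} \ge 0$, so the sum attains $\big( \sum_{q \in \cS\setminus\{n(j)\}} c_q^2 \big)^{1/2}$. Hence the inner supremum equals $\big( \sum_{q \in \cS\setminus\{n(j)\}} |\mu(\bg_j,\bg_q)|^2 \big)^{1/2}$, and taking the maximum over $1 \le j \le N_{\by}$ gives \eqref{eq:Prop1}.

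This argument is essentially routine; the only step that deserves attention is the use of the assumption $|\cS| \le N_v$, which is precisely what makes the normalized nonzero rows of $\bX$ a genuine orthonormal system — so that Bessel's inequality delivers the sharp constant $1$ — and what leaves enough room in $\CC^{1\times N_v}$ to realize the optimal test row $\bvr$. No cancellation difficulties occur because equality in Cauchy--Schwarz is achieved with the nonnegative coefficients $c_q$.
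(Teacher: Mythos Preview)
Your proof is correct and follows essentially the same route as the paper: both arguments reduce the inner supremum to maximizing a weighted sum of Fourier-type coefficients against an orthonormal system, use Bessel's inequality for the upper bound, and identify the maximizer as the normalized coefficient vector $\bga^{(j)}/\|\bga^{(j)}\|_2$. Your version is slightly more explicit in that you write down the optimal $\bvr$ directly as a linear combination of the $\wh\bx_q$, whereas the paper phrases the last step via the dual characterization of the $\ell_2$ norm; this is only a presentational difference.
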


\vspace{0.05in} This proposition, proved in section \ref{sect:proof3},
gives a simpler expression of $\cI_{N_v}$, that  we can compare with 
\begin{equation}
  \cI_1 = \max_{1 \le j \le N_{\by}} \sum_{q \in \cS \setminus
    \{n(j)\}} |\mu(\bg_j,\bg_q)|,
  \label{eq:O5}
\end{equation}
to understand when $\cI_{N_v} \ll \cI_1$.  For this purpose, let us
define the vector $\bga^{(j)} \in \RR^{1 \times (|\cS|-1)}$ with
entries $|\mu(\bg_j, \bg_q)|$, for $q \in \cS \setminus\{n(j) \}$, and  rewrite
\eqref{eq:Prop1} and \eqref{eq:O5} as 
\begin{equation}
  \cI_{N_v} = \max_{1 \le j \le N_{\by}} \|\bga^{(j)}\|_2, \qquad
  \cI_1 =  \max_{1 \le j \le N_{\by}} \|\bga^{(j)}\|_1,
  \label{eq:Ort1}
\end{equation}
using the $\ell_2$ and $\ell_1$ vector norms.  Suppose
that the maximizer in the definition of $\cI_{N_v}$ is at index $j =
m$. Basic vector norm inequalities give the general relation
\begin{equation*}
  \cI_{N_v} = \|\bga^{(m)}\|_2 \le \|\bga^{(m)}\|_1 \le \cI_1,
\end{equation*}
which is nothing new than was discussed previously. However, if we
assume further that the entries in $\bga^{(m)}$ are of the same order,
meaning that there exist positive numbers $\beta^-$ and $\beta^+$,
ordered as $\beta^- \le \beta^+$ and satisfying $\beta^+/\beta^- =
O(1)$, such that
\begin{equation}
  \beta^- \le |\mu(\bg_m,\bg_q)| \le \beta^+,
  \quad \forall ~ q \in \cS \setminus \{n(m)\},
  \label{eq:Ort3}
\end{equation}
then we have
\begin{align}
  \cI_{N_v} \le \beta^+ \sqrt{|\cS|-1} = \frac{\beta^+\big[\beta^{-}
      (|\cS|-1)\big]}{\beta^{-} \sqrt{|\cS|-1}} \le
  \frac{\beta^+\|\bga^{(m)}\|_1}{\beta^{-} \sqrt{|\cS|-1}} \le
  \frac{\beta^+ \cI_1}{\beta^{-} \sqrt{|\cS|-1}} =
  O\left(\frac{\cI_1}{\sqrt{|\cS|-1}}\right).
\label{eq:Ort4}
\end{align}

Recalling the discussion below definition \eqref{eq:th16} of the
semimetric $\cd$ and that $|\mu(\bg_m,\bg_q)| = 1 - \cd(m,q)$, we can
interpret  \eqref{eq:Ort3} as having points in 
$\Omega_\cS$  evenly distributed, at similar spacing.
If this condition holds, then $\cI_{N_v}$ is smaller than $\cI_1$, by  order $\sqrt{|\cS|}$. 
In practice, it may be difficult to have a large number $|\cS|$ of points at similar 
distance in the imaging plane, in order to see the improvement predicted by  \eqref{eq:Ort4}. 
However, this is just a bound, and the numerical simulations in section \ref{sect:gain} show that a significant 
reduction of $\cI_{N_v}/\cI_{1}$ is achieved even when the imaging region
is reduced to a  line. 

\subsubsection{Clusters of unknowns}
\label{sect:cluster}
The  multiple view interaction coefficient $\cI_{N_v}$ may be large 
for arbitrary distributions of points in $\Omega_\cS$, so we cannot
conclude from the estimates above that the
reconstruction $\bX^\ep$ approximates $\bX$.  However, if the 
points are  clustered around a few locations, indexed by the elements in the
set $\cC \subset \{1, \ldots, N_{\by}\}$ of cardinality $|\cC| \ll
|\cS|$, the  reconstruction is still useful, 
as we now show. 

The result follows by recasting Theorem \ref{thm:1} for the  new 
linear system 
\begin{equation}
  \bG \bU + \bcW = \bD_{\bW}, 
\label{eq:Cl5}
\end{equation}
with cluster unknown matrix $\bU$ and redefined "noise" 
$
\bcW = \bW + \bG \bR$, with  $\bR = \bX - \bU.$
The matrix $\bU$ is defined by projection of $\bX$ on the set of matrices with row support in   $\cC$, 
such that its restriction to the rows indexed by $\cC$
satisfies
\begin{equation}
  \bU_{\cC \rightarrow} = \bG_{\cC}^\dagger \bG \bX.
  \label{eq:Cl3}
\end{equation}
Here $\bG_{\cC}^\dagger$ is the pseudoinverse of  $\bG_{\cC} = (\bg_j)_{j\in \cC}$, the
restriction of the sensing matrix to the columns indexed in $\cC$, assumed to have 
full column rank. 
A similar calculation to that in \eqref{eq:calcul} implies that the "residual" $\bR$ satisfies 
$
\bG_{\cC}^\star \bG \bR = 0,
$
meaning that the columns of $\bG \bR$ are orthogonal to the range of $\bG_{\cC}$. In other words, 
$\bR$ accounts for the row support $\cS$ of $\bX$ being different from $\cC$. The magnitude of this 
residual depends on how close the points are clustered together, as stated in the next lemma proved in section \ref{sect:proof4}.

\vspace{0.05in}
\begin{lemma}
\label{lem:Cl1}
Decompose the set $\cS$ in $|\cC|$ disjoint
parts, called ``cluster sets'', indexed by the entries in $\cC$,
\begin{equation}
  \cS = \bigcup_{j \in \cC} \mathscr{S}_j, \quad
  \mathscr{S}_j = \{ q \in \cS \mbox{ such that } \cd(q,j) < \cd(q,j'), ~
  \forall j' \in \cC, j' \ne j\}, \quad j \in \cC.
  \label{eq:Cl1}
\end{equation}
Suppose that each cluster 
set $\mathscr{S}_j$  is supported within a ball of radius $\rc$ around
the point $j \in \cC$, with respect to the semimetric $\cd$, for
all $j \in \cC$, and that $\cd(j,j') > \rc$ for all distinct  $j', j \in \cC$.
Then, 
\begin{equation}
\|\bG \bR\|_F \le \sqrt{2 \rc} \|\bX^T\|_{2,1},
\label{eq:Cln1}
\end{equation}
where the index $T$ denotes the transpose.
\end{lemma}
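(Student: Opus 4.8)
The plan is to estimate $\|\bG\bR\|_F$ one column at a time, using the orthogonality relation $\bG_\cC^\star\bG\bR=0$ recorded just before the lemma. Fix a view index $v\in\{1,\dots,N_v\}$ and write $\bx_v,\bu_v,\br_v$ for the $v$-th columns of $\bX,\bU,\bR$. By \eqref{eq:Cl3} the vector $\bG\bu_v=\bG_\cC\bG_\cC^\dagger\bG\bx_v$ lies in $\mathrm{range}(\bG_\cC)$, while $\bG\br_v=\bG\bx_v-\bG\bu_v$ is orthogonal to that range. Hence $\bG\br_v$ is precisely the orthogonal-projection residual of $\bG\bx_v$ onto $\mathrm{range}(\bG_\cC)$, and by minimality of that residual
\[
  \|\bG\br_v\|_2 \;=\; \min_{\bw\in\,\mathrm{range}(\bG_\cC)}\|\bG\bx_v - \bw\|_2 \;\le\; \|\bG\bx_v - \bw\|_2
\]
for every particular $\bw\in\mathrm{range}(\bG_\cC)$.

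Next I would expand $\bG\bx_v=\sum_{q\in\cS}X_{q,v}\,\bg_q$, regroup the sum over the cluster partition \eqref{eq:Cl1}, and make the choice $\bw=\sum_{j\in\cC}\sum_{q\in\mathscr{S}_j}X_{q,v}\,c_q\,\bg_j$, which lies in $\mathrm{range}(\bG_\cC)$ because each $\bg_j$ with $j\in\cC$ is a column of $\bG_\cC$. This gives $\bG\bx_v-\bw=\sum_{j\in\cC}\sum_{q\in\mathscr{S}_j}X_{q,v}\,(\bg_q-c_q\bg_j)$. Choosing $c_q$ to be the unimodular phase of $\mu(\bg_j,\bg_q)$, a short computation using the normalization \eqref{eq:normg} gives $\|\bg_q-c_q\bg_j\|_2^2=2-2|\mu(\bg_j,\bg_q)|=2\,\cd(j,q)$, which is at most $2\rc$ since $q\in\mathscr{S}_j$ lies in the ball of radius $\rc$ about $j$. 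The triangle inequality then yields $\|\bG\br_v\|_2\le\sqrt{2\rc}\sum_{j\in\cC}\sum_{q\in\mathscr{S}_j}|X_{q,v}|=\sqrt{2\rc}\,\|\bx_v\|_1$.

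Finally I would square this bound, sum over $v=1,\dots,N_v$, and note that the rows of $\bX^T$ are the columns $\bx_v$ of $\bX$, so $\sum_v\|\bx_v\|_1^2=\|\bX^T\|_{2,1}^2$; this gives $\|\bG\bR\|_F^2=\sum_v\|\bG\br_v\|_2^2\le 2\rc\,\|\bX^T\|_{2,1}^2$, which is \eqref{eq:Cln1}.

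The one genuinely delicate step is the phase choice. Replacing $\bg_q$ by the naive difference $\bg_q-\bg_j$ only produces $\|\bg_q-\bg_j\|_2^2=2-2\,\mathrm{Re}\,\mu(\bg_j,\bg_q)$, which is not controlled by the semimetric $\cd$ since $\mathrm{Re}\,\mu$ may be far below $|\mu|$; one must absorb the argument of $\mu(\bg_j,\bg_q)$ into the freely available scalar multiplying $\bg_j$ to recover the clean value $2\,\cd(j,q)$. Everything else — the reduction from the Frobenius norm to columns, the minimality of the projection residual, and the regrouping over clusters — is routine. I would also note that the separation hypothesis $\cd(j,j')>\rc$ for distinct cluster centers is not needed for this bound itself; it merely guarantees that the nearest-center partition \eqref{eq:Cl1} is consistent with the ball-of-radius-$\rc$ assumption on the sets $\mathscr{S}_j$.
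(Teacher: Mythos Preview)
Your argument is correct and follows essentially the same path as the paper: reduce to columns, use minimality of the orthogonal projection onto $\mathrm{range}(\bG_\cC)$, choose a comparison vector supported on the cluster centers, and bound each term by $\sqrt{2\cd(j,q)}\le\sqrt{2\rc}$. The only minor difference is that the paper takes the coefficient $c_q=\mu(\bg_j,\bg_q)$ itself rather than its unimodular phase, giving the slightly sharper $\|\bg_q-\mu(\bg_j,\bg_q)\bg_j\|_2^2=1-|\mu(\bg_j,\bg_q)|^2\le 2\cd(j,q)$ and making the step you flagged as ``genuinely delicate'' entirely routine; your observation that the separation hypothesis $\cd(j,j')>\rc$ is not used for this bound is also correct.
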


The next theorem, proved in section \ref{sect:proof4}, is the
extension of Theorem \ref{thm:1}. It says that 
if the cluster radius $\rc$ and the cluster multiple view interaction coefficient 
\begin{equation}
  \cI^{\bU}_{N_v} = \max_{1 \le j \le N_{\by}} \sup_{\bvr \in \CC^{1
      \times N_v}} \sum_{q \in \cC \setminus\{n(j)\}}
  |\mu(\bg_j,\bg_q)| |\mu(\bvr,\bu_{q\rightarrow})|
\label{eq:Cln3}
\end{equation}
are small, the MMV reconstruction is row supported near $\cC$. This is an 
improvement over the estimate in Theorem \ref{thm:1}, because $\cI^{\bU}_{N_v}$ is 
much smaller than $\cI_{N_v}$ when the points in $\cC$ are well separated. 

\vspace{0.05in}
\begin{theorem}
\label{thm:3}
Let $\bX^\ep$ be the minimizer of \eqref{eq:th11}, with $\ep$ chosen
large enough to satisfy 
\begin{equation}
\|\bcW\|_F = \|\bW + \bG \bR\|_F < \ep.
\label{eq:Cln2}
\end{equation}
 Decompose $\bX^\ep$ it in two parts
\begin{equation}
\bX^\ep = \bU^{\ep,r} + \bE^{\ep,r},
\label{eq:Cln4}
\end{equation}
where $\bU^{\ep,r}$ is row supported in the set $\cB_r(\cC)$, the $r$
vicinity of $\cC$ with respect to the semimetric $\cd$, and
$\bE^{\ep,r}$ is the error supported in the complement $\{1, \ldots,
N_\by\} \setminus \cB_r(\cC)$. This satisfies the estimate
\begin{equation}
    \|\bE^{\ep,r}\|_{1,2} \le \frac{2 \cI_{N_v}^{\bU}}{r} \|\bX^\ep\|_{1,2} +
    \frac{1}{r} \big\| \big(\bG^\star \bW^\ep\big)_{\cC \rightarrow}
    \big\|_{1,2},
    \label{eq:Cln5}
\end{equation}
with $\bW^\ep = \bG(\bX^\ep - \bX)$ defined as in Theorem \ref{thm:1}. 
\end{theorem}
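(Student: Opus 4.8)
The plan is to mimic the proof of Theorem~\ref{thm:1} almost verbatim, replacing the original linear system $\bG\bX=\bD$ by the clustered system \eqref{eq:Cl5}, and the support set $\cS$ by $\cC$. First I would observe that by \eqref{eq:Cln2} the matrix $\bU$ defined through \eqref{eq:Cl3} is feasible for the program $\mathscr P_{1,2}^\ep$, since $\|\bG\bU-\bD_\bW\|_F=\|\bcW\|_F<\ep$. Hence $\bX^\ep$, being the minimizer, satisfies $\|\bX^\ep\|_{1,2}\le\|\bU\|_{1,2}$, and moreover $\bW^\ep:=\bG(\bX^\ep-\bX)=\bG(\bX^\ep-\bU)-\bG\bR$ so that, using the triangle inequality and the feasibility constraint, $\|\bG(\bX^\ep-\bU)\|_F\le\|\bG\bX^\ep-\bD_\bW\|_F+\|\bcW\|_F\le 2\ep$. (This plays the role of \eqref{eq:th19p} but is not needed with an explicit constant here, since the final bound \eqref{eq:Cln5} is left in terms of $(\bG^\star\bW^\ep)_{\cC\rightarrow}$.)

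Next I would set up the decomposition \eqref{eq:Cln4}: split $\bX^\ep$ into its restriction $\bU^{\ep,r}$ to rows in $\cB_r(\cC)$ and the complementary part $\bE^{\ep,r}$. The key is to bound $\|\bE^{\ep,r}\|_{1,2}$ from below by testing the optimality/feasibility of $\bX^\ep$ against $\bU$, exactly as in the proof of Theorem~\ref{thm:1} for $\bX$. One writes, for each row index $j\notin\cB_r(\cC)$, the correlation of the $j$-th measurement column $\bg_j$ with the residual $\bG(\bX^\ep-\bU)$; because $\bg_j$ is far (in $\cd$) from every point of $\cC$, the interaction with the rows of $\bU$ supported on $\cC$ is controlled by $\cI^\bU_{N_v}$ through the definition \eqref{eq:Cln3}, while the noise contribution gives the $(\bG^\star\bW^\ep)_{\cC\rightarrow}$ term. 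Summing over $j\notin\cB_r(\cC)$ and using that $\cd(j,q)\ge r$ there — so that $|\mu(\bg_j,\bg_q)|\le 1-r$ cannot be used directly, but rather the lower bound on $\cd$ enters through the $1/r$ factor via the same algebraic manipulation as in \cite{borceaKocyigit} — yields $r\|\bE^{\ep,r}\|_{1,2}\le 2\cI^\bU_{N_v}\|\bX^\ep\|_{1,2}+\|(\bG^\star\bW^\ep)_{\cC\rightarrow}\|_{1,2}$, which is \eqref{eq:Cln5}. The condition $2\cI^\bU_{N_v}<r<1$ from the hypothesis of Theorem~\ref{thm:1} (inherited here) guarantees the bound is nontrivial.

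The main obstacle is bookkeeping rather than a genuinely new idea: one must check that every place in the proof of Theorem~\ref{thm:1} where the orthogonality relation $\bG_\cS^\star\bG\bEr=0$ or the support structure of $\bX$ was used still goes through with $\bU$ and $\bR$ in place of $\bX$ and its residual. The crucial structural facts are (i) $\bG_\cC^\star\bG\bR=0$, which holds by the same computation as \eqref{eq:calcul} and is stated just before Lemma~\ref{lem:Cl1}, and (ii) $\bU$ is row-supported exactly on $\cC$, so that $n(j)$ for $j\notin\cB_r(\cC)$ never lies in $\cC$ and the sum in \eqref{eq:Cln3} is indeed over all of $\cC\setminus\{n(j)\}=\cC$ when $n(j)\notin\cC$ (or $\cC$ minus one element otherwise). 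Once these are in hand, the estimate \eqref{eq:Cln5} follows by the identical chain of inequalities. I would therefore present the proof as: ``apply Theorem~\ref{thm:1} to the system \eqref{eq:Cl5} with $\cC$ in place of $\cS$, noting that $\bU$ is feasible by \eqref{eq:Cln2} and that $\bG_\cC^\star\bG\bR=0$; the decomposition \eqref{eq:Cln4} and the bound \eqref{eq:Cln5} are then exactly \eqref{eq:th18}--\eqref{eq:th19} with $\bW$ replaced by $\bcW$ and $\cI_{N_v}$ by $\cI^\bU_{N_v}$,'' and then spell out the one or two lines verifying feasibility and the orthogonality relation, leaving the rest to the reference to the proof of Theorem~\ref{thm:1}.
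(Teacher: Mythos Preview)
Your proposal is correct and follows essentially the same approach as the paper: apply the proof of Theorem~\ref{thm:1} verbatim with the substitutions $\bX\to\bU$, $\cS\to\cC$, $\bW\to\bcW$, which yields the bound with noise term $\|(\bG^\star\bG(\bX^\ep-\bU))_{\cC\rightarrow}\|_{1,2}$, and then use $\bX^\ep-\bU=\bX^\ep-\bX+\bR$ together with $\bG_\cC^\star\bG\bR=0$ to rewrite that term as $\|(\bG^\star\bW^\ep)_{\cC\rightarrow}\|_{1,2}$. The paper's proof is in fact even terser than yours---it is exactly your final paragraph---so the extra verification you sketch (feasibility of $\bU$, the orthogonality relation) is appropriate detail rather than a different route.
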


An extension of the quantitative estimate in Theorem \ref{thm:2} is possible, 
but we omit it here for brevity. The result says that we should expect 
a good qualitative agreement between $\bX^\ep$ and a local aggregate 
of $\bX$ over the cluster sets, if the single view interaction 
coefficient $\cI^{\bU}_{1}$ is small, meaning that the points in $\cC$ are 
sufficiently far apart.

\section{SAR imaging of direction dependent reflectivity}
\label{sect:applic1}
In this section we consider the application of SAR imaging of
direction dependent reflectivities. We begin with the data model in
section \ref{sect:applic1.1}, and then derive in section
\ref{sect:applic1.2} the linear system \eqref{eq:th1}. The discussion
in these two sections is similar to that in
\cite{borcea2016synthetic}, so we keep it short and give only the
information that is needed to connect to the theory in section
\ref{sect:theory2}.  We explore in section \ref{sect:applic1.3} the
condition of orthogonality of the rows of  $\bX$,
assumed in Proposition \ref{prop:1}, and use numerical simulations in
section \ref{sect:applic1.4} to illustrate the theoretical results.

\subsection{The SAR data model}
\label{sect:applic1.1}
\begin{figure}[t]
\begin{center}
\includegraphics[width=0.25\textwidth]{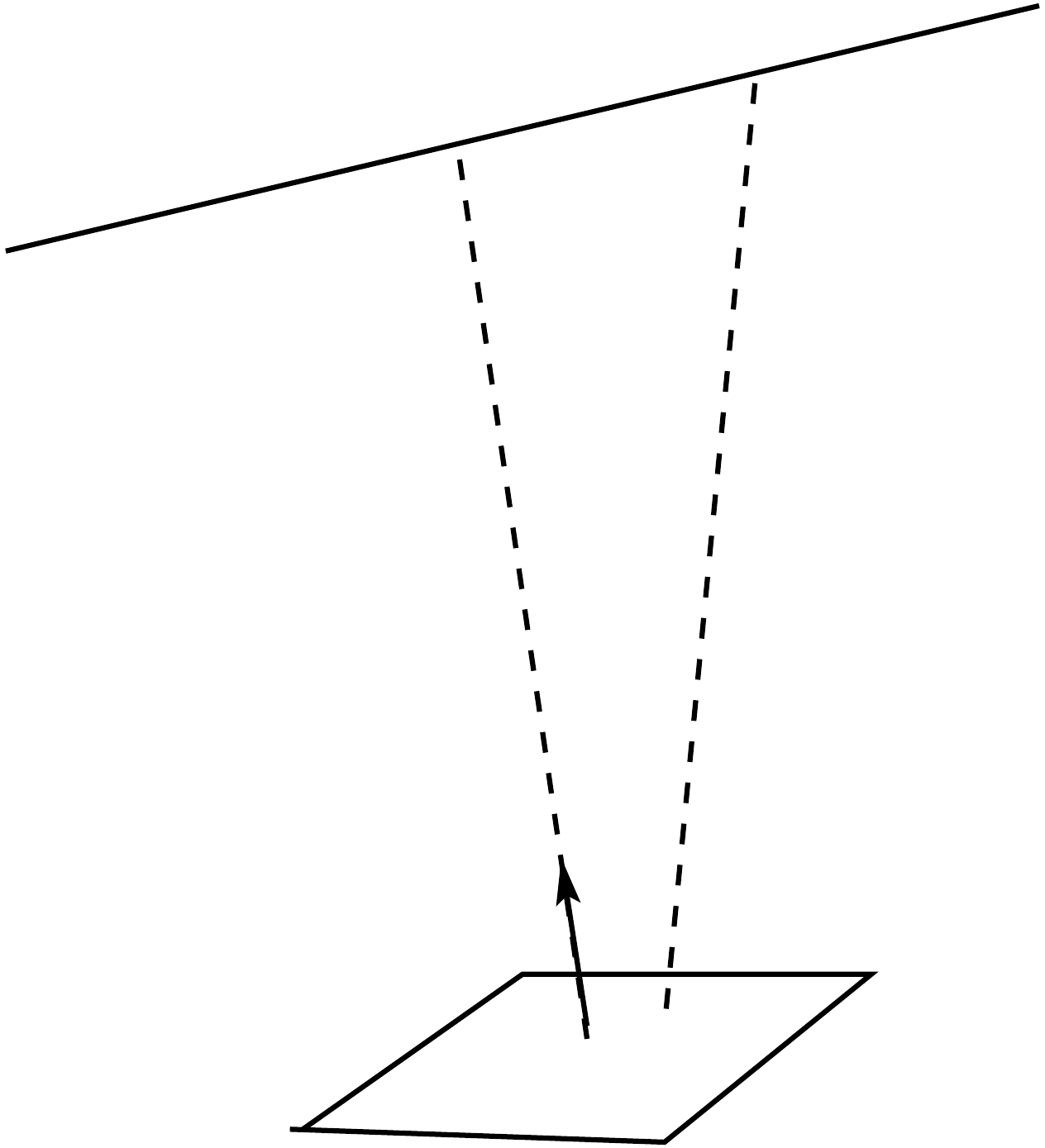}
\end{center}
\setlength{\unitlength}{3947sp}%
\begingroup\makeatletter\ifx\SetFigFont\undefined%
\gdef\SetFigFont#1#2#3#4#5{%
  \reset@font\fontsize{#1}{#2pt}%
  \fontfamily{#3}\fontseries{#4}\fontshape{#5}%
  \selectfont}%
\fi\endgroup%
\begin{picture}(9770,1095)(1568,-1569)
\put(5300,700){\makebox(0,0)[lb]{\smash{{\SetFigFont{7}{8.4}{\familydefault}{\mddefault}{\updefault}{\color[rgb]{0,0,0}{\normalsize $\oL$}}%
}}}}
\put(5550,-350){\makebox(0,0)[lb]{\smash{{\SetFigFont{7}{8.4}{\familydefault}{\mddefault}{\updefault}{\color[rgb]{0,0,0}{\normalsize $\oby$}}%
}}}}
\put(5370,-60){\makebox(0,0)[lb]{\smash{{\SetFigFont{7}{8.4}{\familydefault}{\mddefault}{\updefault}{\color[rgb]{0,0,0}{\normalsize ${\bmm}$}}%
}}}}
\put(5300,1480){\makebox(0,0)[lb]{\smash{{\SetFigFont{7}{8.4}{\familydefault}{\mddefault}{\updefault}{\color[rgb]{0,0,0}{\normalsize $\obr$}}%
}}}}
\put(5850,1590){\makebox(0,0)[lb]{\smash{{\SetFigFont{7}{8.4}{\familydefault}{\mddefault}{\updefault}{\color[rgb]{0,0,0}{\normalsize ${\br}$}}%
}}}}
\put(5700,-280){\makebox(0,0)[lb]{\smash{{\SetFigFont{7}{8.4}{\familydefault}{\mddefault}{\updefault}{\color[rgb]{0,0,0}{\normalsize ${\by}$}}%
}}}}
\put(5200,-420){\makebox(0,0)[lb]{\smash{{\SetFigFont{7}{8.4}{\familydefault}{\mddefault}{\updefault}{\color[rgb]{0,0,0}{\normalsize $\Omega$}}%
}}}}
\end{picture}%
\vspace{-1in}
\caption{Setup for SAR imaging using a linear synthetic aperture
  centered at $\obr$, at distance $\oL$ from the center
  $\oby$ of the imaging region $\Omega$. The antenna locations
  $\br$ span the aperture of length $a$, and $\by$ denotes a point in
  $\Omega$. The unit vector $\bmm = (\obr-\oby)/\oL$ pointing from $\oby$ to
  $\obr$ defines the range direction.}
\label{fig:setup}
\end{figure}

Consider the set-up illustrated in Figure \ref{fig:setup}, where we
display a piece of the synthetic aperture spanned by the moving
transmit-receive antenna, called a sub-aperture. We approximate the
sub-aperture by a line segment along the unit vector $\btau$, with
center at location $\obr$, and length $a$. The imaging region $\Omega$
lies on a plane surface, and is centered at location $\oby$, at
distance $\oL = |\obr-\oby|$ from the aperture center $\obr$.  The
antenna emits periodically the signal
$f(t)$ and measures the back-scattered waves. The waves propagate
much faster than the antenna, so we assume that the emission and
reception occur at the same location. The antenna moves by a small
increment $ \Delta \br = \frac{a}{(N_{\br}-1)} \btau $ between two
emissions, so the measurements are at locations
$
\br_j = \obr - \frac{a \btau}{2} + (j-1)\Delta \br$, for $ j = 1, \ldots,
N_{\br}.$

In the single scattering (Born) approximation, and neglecting for now
polarization effects, the scattered wave  at $\br_j$ is given by
\begin{align}
p(\br_j,t;\obr,\oom) = \int \frac{d \om}{2 \pi} \, e^{-i \om t} \hat f(\om)
k^2(\om) \sum_{q=1}^{N_\by} \rho_q(\obr,\oom)
\frac{\exp \big[ 2 i k(\om) |\br_j-\by_q| \big]}{\big(4 \pi
  |\br_j -\by_q|\big)^2}.
\label{eq:SA1}
\end{align}
Here the hat denotes Fourier transform with respect to time, $\om$ is
the frequency and  $\rho_q(\obr,\oom)$ is the reflectivity\footnote{The
  reflectivity is assumed slowly changing so it can be
  approximated by a constant over this sub-aperture and bandwidth. }
of the scatterer at $\by_q \in \Omega$. This depends on  the sub-aperture
center  $\obr$ and the central frequency $\oom$ of the signal $f$. 
The integral over $\om$ is over the support $|\om - \oom| \lesssim b$ of $\hat f$, where $b$ is the bandwidth.
The propagation of the waves between the antenna
location $\br_j$ and $\by_q$ is modeled with the Green's function for
Helmoltz's equation in the medium with constant wave speed $c$, and  
the wavenumber is $k(\om) = \om/c$.

In SAR imaging, the wave-field \eqref{eq:SA1} is convolved with the
time reversed emitted pulse, delayed by the round trip travel time of
the waves between the antenna and the center point $\oby$ in
$\Omega$. This data processing is called down-ramping
\cite{curlander1991synthetic} and we denote the result by
\begin{align}
d(\br_j,t;\obr,\oom) &= p(\br_j,t;\obr,\oom) \star_t {f^\star
\big(-t - 2|\br_j-\oby|/c \big)},
\label{eq:SA2}
\end{align}
where $f^\star$ denotes the complex conjugate of $f$.  The convolution $f(t) \star_t f^\star(-t)$
is called the pulse compressed signal. We denote it by $\varphi(bt) = f(t) \star_t f^\star(-t)$, 
with function $\varphi$ of dimensionless argument. This is supported at $t = O(1/b)$.

Let us define the unit vector 
$
\bmm = (\obr-\oby)/{\oL}
$
which determines the so-called range direction in imaging, and  
the orthogonal
projection $\mathbb{P} = {\bf I} - \bmm \bmm^T$  in the cross-range plane, orthogonal to $\bmm$. 
The size of the imaging region in the range and cross-range direction is given by the length scales 
\[
Y = \sup_{\by \in \Omega} |(\by-\oby) \cdot \bmm|, \qquad
Y^\perp = \sup_{\by \in \Omega} |\mathbb{P}(\by-\oby)|.
\]
We assume a typical  imaging regime defined by the scale order $\oL \gg a > Y^\perp \gg \ola$ and Fresnel numbers
\begin{equation}
\frac{a^2}{\ola\, \oL} 
\gtrsim \frac{(Y^\perp)^2}{\ola \,\oL} \gtrsim 1,
\label{eq:SA13}
\end{equation}
where $\ola = 2 \pi /\ok$ is the central wavelength and $\ok = k(\oom) = \oom/c$.  
These  inequalities mean 
physically that the wave front observed at the
sub-aperture or in $\Omega$ is not planar. If this were not the case,
it would be impossible to localize the scatterers in cross-range. 

Since the cross-range resolution  of classic SAR imaging \cite{cheney2009fundamentals} equals $\ola\, \oL/a$,  the 
inequalities \eqref{eq:SA13} ensure that $Y^\perp$ is larger than this limit, so image focusing can be observed. 
The range resolution  is determined by the accuracy of
travel time estimation from the down-ramped data \eqref{eq:SA2}. It is
of the order $c/b$, so typically  $Y \gtrsim {c}/{b}.$ 
In most imaging systems $b \ll \oom $. To simplify the data model, we assume a bandwidth and aperture segmentation in 
small enough sub-bands $b$  and sub-aperture sizes $a$ so that
\begin{equation}
\frac{b}{\oom} \frac{a Y^\perp}{\ola \,  \oL} \ll 1, \quad \frac{a^2 Y}{\ola \, \oL^2} \ll 1, \quad 
\frac{a^2 Y^\perp}{\ola \, \oL^2} \ll 1.
\label{eq:SA15}
\end{equation} 
Under these scaling assumptions and using the approximations described in \cite[Section 3.1]{borcea2016synthetic},
we can write  \eqref{eq:SA2} in the form 
\begin{equation}
\mathscr{D}_j(\obr) = \sum_{q=1}^{N_\by} \frac{\exp \Big[-2 i \ok 
  \frac{\Delta \br_j \cdot \mathbb{P} \Delta \by_q}{\oL} \Big]}{\sqrt{N_r}}
\mathscr{X}_q(\obr),
\label{eq:SA19}
\end{equation}
with the notation 
$
\Delta \br_j = \br_j -\obr$ and  $\Delta \by_q = \by_q -\oby. $
Here $\mathscr{D}_j(\obr) $ are the down-ramped data \eqref{eq:SA2}, up to some scaling factor, and  evaluated at 
a fixed time $\bar t$,
\begin{equation}
\mathscr{D}_j(\obr) = d(\br_j,\ot;\obr,\oom) e^{i \ok \,\ot} \left(\frac{4
  \pi \oL}{\ok}\right)^2,
\label{eq:SA21}
\end{equation}
whereas
\begin{equation}
\mathscr{X}_q(\obr) = \rho_q(\obr,\om) \sqrt{N_r} \varphi \Big[ b
  \Big(\ot + \frac{2 \bmm \cdot \Delta \by_q}{c} \Big)\Big] \exp \Big[ - 2
    i \ok \Big(\bmm \cdot \Delta \by_q - \frac{\Delta \by_q \cdot
      \mathbb{P} \Delta \by_q}{2\oL}\Big)\Big].
\label{eq:SA20}
\end{equation}
We suppressed all the constant variables in the arguments of 
$\mathscr{D}_j$. By fixing the time $\bar t$, we limit the sum in \eqref{eq:SA19} to 
the set of points with range coordinates $ \bmm \cdot
\Delta \by_q = -t + O(c/b).  $ This set is called a range bin in the
SAR literature \cite{curlander1991synthetic}.
We consider a single range bin,  and study
the estimation in the cross-range direction of the reflectivity, for
the single frequency sub-band centered at $\oom$.  
\subsection{The MMV formulation}
\label{sect:applic1.2}
The multiple views  correspond to different
sub-apertures of size $a$, dividing a larger aperture of size $A$. The sub-apertures 
are centered at $\obr_v$, for $v = 1, \ldots, N_v$.
The noiseless data model for the $v$--th view is 
\eqref{eq:SA19}, with $\obr$ replaced by $\obr_v$,$\oL$ replaced
by $\oL_v = |\obr_v-\oby|$, $\bmm$ replaced by $\bmm_v  = |\obr_v-\oby|/\oL_v$ and 
$\mathbb{P}$ replaced by $\mathbb{P}_v = \itbf{I} - \bmm_v \bmm_v^T$. We 
assume for simplicity that the large aperture is linear, along the unit vector $\btau$.

Under technical scaling assumptions described in detail in \cite{borcea2016synthetic}, 
which mean physically that the  imaging points remain
within the same classic SAR resolution limits  for all the views, 
we obtain  from \eqref{eq:SA19}
the linear system \eqref{eq:th1}, for matrices $\bD$, $\bX$ and $\bG$
with entries
\begin{equation}
D_{j,v} = \mathscr{D}_j(\obr_v), \quad X_{q,v} = \mathscr{X}_{q}(\obr_j), 
\quad G_{j,q} = \frac{1}{\sqrt{N_r}} \exp \Big[-2 i \ok 
  \frac{\Delta \br_j \cdot \mathbb{P}_1 \Delta \by_q}{\oL_1} \Big].
\label{eq:SA23}
\end{equation}
Note that the sensing matrix $\bG$ is defined relative to the first
sub-aperture. Its columns $\bg_q$, for $q = 1, \ldots, N_\by$, have norm one, as assumed in
\eqref{eq:normg}, and their correlation
\begin{equation}
\mu(\bg_q,\bg_l) = \sum_{j = 1}^{N_r} G_{j,q}^\star G_{j,l} =
\frac{1}{N_r} \sum_{j=1}^{N_r} \exp \Big[-2 i \ok \frac{\Delta \br_j
    \cdot \mathbb{P}_1 (\by_q - \by_l)}{\oL_1} \Big]
\label{eq:SA24}
\end{equation}
is a function of $\by_q-\by_l$, as stated below equation
\eqref{eq:th16}.  We can approximate further this correlation by
replacing the sum with the integral over the sub-aperture,
\begin{equation}
\mu(\bg_q,\bg_l) \approx \frac{1}{a} \int_{-a/2}^{a/2} dr \, \exp
\Big[-2 i \ok r \frac{ \btau \cdot \mathbb{P}_1 (\by_q -
    \by_l)}{\oL_1} \Big] =\mbox{sinc} \Big[\frac{\ok a \btau \cdot
  \mathbb{P}_1 (\by_q - \by_l)}{ \oL_1} \Big].
\label{eq:SA25}
\end{equation}
This attains its maximum, equal to $1$, when $q = l$, and satisfies
$|\mu(\bg_q,\bg_l)< 1$ for all $q \ne l$, as assumed in
\eqref{eq:th16b}.  Moreover, $|\mu(\bg_q,\bg_l)|$ decays monotonically
in the vicinity of its peak, so we can relate the Euclidian distance
between the points to the semimetric
$\cd(q,l)$, as pointed out below equation \eqref{eq:th16}.

\subsection{Orthogonality of the rows}
\label{sect:applic1.3}
To use the results in section \ref{sect:orthogX}, we now study under which conditions the rows $\bx_{q\rightarrow}$ of
$\bX$ are approximately orthogonal.
For this purpose, we assume that $\brho_q(\obr_v,\om)$ 
changes  slowly with $\obr_v$, on a length scale larger than $a$.  This is 
consistent with the MMV formulation, which approximates the reflectivity by a
constant for each sub-aperture. We
also suppose that the sub-apertures  overlap, with two consecutive centers separated by a small distance 
with respect to $a$. 
This allows us to approximate the sums in the correlations of  the rows by integrals 
over the large aperture of linear size $A$, centered at $\br_o$. 

\vspace{0.05in}
\begin{proposition}
\label{prop:SARort} There exists a constant $C_{q,l}$ that depends on 
how fast the reflectivities at points $\by_q$ and $\by_l$ change with
direction, such that
\begin{equation}
\big|\mu(\bx_{q \rightarrow}, \bx_{l \rightarrow})\big| \le \min\{1, C_{q,l}/|Q|\},
\quad \mbox{for} ~ q\ne l, ~ ~ q,l = q, \ldots, N_\by, \quad Q = 4 \pi \frac{A \btau \cdot \mathbb{P}_o (\by_q - \by_l)}{\ola |\br_o -
  \oby|},
\label{eq:propOrt1}
\end{equation}
where $\bmm_o = (\br_o-\oby)/|\br_o-\oby|$ and $\mathbb{P}_o = {\itbf I} - \bmm_o \bmm_o^T$.
\end{proposition}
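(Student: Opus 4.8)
\emph{Proof plan.} The upper bound by $1$ in \eqref{eq:propOrt1} is immediate: $|\mu(\bx_{q\rightarrow},\bx_{l\rightarrow})|\le 1$ by the Cauchy--Schwarz inequality applied to the Hermitian inner product $\lin\bx_{q\rightarrow},\bx_{l\rightarrow}\rin$. So the content of the statement is the decaying bound $C_{q,l}/|Q|$, and the plan is to rewrite $\lin\bx_{q\rightarrow},\bx_{l\rightarrow}\rin=\sum_{v=1}^{N_v}X_{q,v}\overline{X_{l,v}}$ as an oscillatory integral over the \emph{large} aperture of size $A$ and then integrate by parts once, picking up the factor $1/|Q|$ with a constant governed by the variation of the reflectivities.

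First I would factor the entries. From \eqref{eq:SA20}, I would write $X_{q,v}=\mathscr{X}_q(\obr_v)=\alpha_q(\obr_v)\,\exp[-2i\ok\,\bmm_v\cdot\Delta\by_q]$, where the amplitude $\alpha_q(\obr_v)$ gathers the reflectivity $\brho_q(\obr_v,\om)$, the pulse--compression factor $\sqrt{N_r}\,\varphi[\,\cdot\,]$, and the residual Fresnel phase $\exp[i\ok\,\Delta\by_q\cdot\mathbb{P}_v\Delta\by_q/\oL_v]$. Under the technical scaling assumptions of \cite{borcea2016synthetic}, which keep every imaging point within one range bin and within the classic SAR resolution limits for all views, $\alpha_q$ is a complex function of bounded variation along the aperture whose sup--norm and total variation are controlled by the rate of change of $\brho_q$ with direction, the range--bin factor and the residual Fresnel phase contributing only slowly varying corrections, of relative size $O(Y^\perp/|\br_o-\oby|)\ll1$, over the scale $A$. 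This factorization is the step I expect to be the main obstacle: one must check that ``slowly varying'' holds over the large aperture $A$ and not merely over a sub--aperture $a$, and that the constant produced at the end depends only on the reflectivity profiles and the geometry, not on $Q$.

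Next, since consecutive sub--aperture centers are separated by a distance small compared with $a$, the sum over $v$ may be replaced by an integral over the aperture parameter $s\in[-A/2,A/2]$, with $\obr_v=\br_o+s\btau$; the same weight $N_v/A$ enters the numerator and each of $\|\bx_{q\rightarrow}\|_2$, $\|\bx_{l\rightarrow}\|_2$, so it cancels in the ratio $\mu$. Expanding $\bmm_v=(\obr_v-\oby)/\oL_v$ about $s=0$, the $s$--independent term is a global phase that factors out, the linear term produces $\exp[-2i\ok\,s\,\btau\cdot\mathbb{P}_o(\by_q-\by_l)/|\br_o-\oby|]=e^{-iQs/A}$ by the definition of $Q$ and $\ok=2\pi/\ola$, and the higher--order terms vary over $[-A/2,A/2]$ by an amount $\ll|Q|$, so I would absorb them into a redefined amplitude $a_{q,l}(s)=\alpha_q(s)\overline{\alpha_l(s)}\times(\text{residual phase})$, still of bounded variation controlled by the directional variation of the reflectivities. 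This yields
\begin{equation*}
\big|\mu(\bx_{q\rightarrow},\bx_{l\rightarrow})\big| = \frac{\Big|\int_{-A/2}^{A/2} a_{q,l}(s)\, e^{-iQs/A}\, ds\Big|}{\big(\int_{-A/2}^{A/2}|\alpha_q(s)|^2\, ds\big)^{1/2}\,\big(\int_{-A/2}^{A/2}|\alpha_l(s)|^2\, ds\big)^{1/2}}.
\end{equation*}

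Finally, one integration by parts, using that an antiderivative of $e^{-iQs/A}$ has modulus $A/|Q|$, gives
\begin{equation*}
\Big|\int_{-A/2}^{A/2} a_{q,l}(s)\, e^{-iQs/A}\, ds\Big| \le \frac{A}{|Q|}\Big(|a_{q,l}(-A/2)| + |a_{q,l}(A/2)| + \mathrm{TV}_{[-A/2,A/2]}(a_{q,l})\Big),
\end{equation*}
and dividing by the product of the $L^2$ norms of $\alpha_q$ and $\alpha_l$ on the aperture, which is strictly positive whenever the reflectivities are not identically zero, produces $|\mu(\bx_{q\rightarrow},\bx_{l\rightarrow})|\le C_{q,l}/|Q|$ with
\begin{equation*}
C_{q,l} = \frac{A\big(|a_{q,l}(-A/2)| + |a_{q,l}(A/2)| + \mathrm{TV}_{[-A/2,A/2]}(a_{q,l})\big)}{\big(\int_{-A/2}^{A/2}|\alpha_q|^2\big)^{1/2}\,\big(\int_{-A/2}^{A/2}|\alpha_l|^2\big)^{1/2}},
\end{equation*}
a constant that, through $a_{q,l}$ and $\alpha_q,\alpha_l$, encodes precisely how fast the reflectivities at $\by_q$ and $\by_l$ vary with direction. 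Combining this with the Cauchy--Schwarz bound $|\mu|\le 1$ gives \eqref{eq:propOrt1}. The delicate point to watch in filling in the details is step two: confirming that the neglected quadratic phases and the range--bin factor genuinely contribute only a $Q$--independent amount to $\mathrm{TV}(a_{q,l})$, so that $C_{q,l}$ does not secretly grow with $Q$.
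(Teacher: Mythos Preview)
Your proposal is correct and follows essentially the same route as the paper: replace the sum over sub-apertures by an integral over $s\in[-A/2,A/2]$, expand the phase $\bmm_v\cdot(\by_q-\by_l)$ linearly in $s$ to isolate the oscillatory factor $e^{iQs/A}$, absorb everything slowly varying into a smooth amplitude $a_{q,l}(s)$ (the paper calls it $\psi_{q,l}$), and integrate by parts once. The paper takes one additional step you stop short of: rather than leaving $C_{q,l}$ as the raw ratio involving $|a_{q,l}(\pm A/2)|$ and $\mathrm{TV}(a_{q,l})$, it bounds the endpoint values via $A|a_{q,l}(A/2)|\le\|a_{q,l}\|_{L_1}+A\|a_{q,l}'\|_{L_1}$ (fundamental theorem of calculus averaged over the interval), then uses Cauchy--Schwarz on the product structure $a_{q,l}=\alpha_q\overline{\alpha_l}$ to reduce everything to the denominator norms times dimensionless constants $K_q,K_l$ measuring $\|\alpha_q'\|_{L_2}/\|\alpha_q\|_{L_2}$, arriving at an explicit $C_{q,l}\sim 1+K_q+K_l$; your formulation is equivalent but less explicit about how the reflectivity variation enters. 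As for your flagged concern about the quadratic phase secretly inflating the constant, the paper simply assumes $A$ and the cross-range offsets are small enough to drop those terms, noting in a footnote that including them changes nothing qualitatively---so your caution is well placed but the paper does not treat it more carefully than you do.
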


\vspace{0.05in} This proposition, proved in Appendix \ref{ap:SARort},
shows that the correlation of the rows of the unknown matrix $\bX$ is
small for points that are separated in cross-range by distances larger than
$\ola |\br_o-\oby|/A$. This length scale is the cross-range resolution
of SAR imaging over the large aperture $A$.  It is also the distance
at which isotropic scatterers must be separated in order to guarantee
unique recovery of their reflectivity with $\ell_1$  (SMV) optimization
over the large aperture, as follows from
\cite{fannjiang2013compressive,chai2013robust,chai2014imaging,borceaKocyigit}.

In the linear system \eqref{eq:th1} with matrices \eqref{eq:SA23}, we
use multiple views from sub-apertures of size $a \ll A$. Each single view
corresponds to an SMV problem, and the condition of unique recovery
for that problem is known to be that the scatterers should be much further apart, at distance of
order $\ola |r_o-\oby|/a$. In MMV we use the entire large aperture,
segmented in $N_v$ smaller sub-apertures.

When the scatterers are approximately isotropic, the constant in
\eqref{eq:propOrt1} is $C_{q,l} \approx 2$. In this case there is no
need to segment the aperture, so it is natural to ask if the MMV
reconstruction is similar to the SMV one, over the large aperture.
This is a difficult question, but we can say from the results in
section \ref{sect:orthogX} that MMV will work better\footnote{As shown
  in section \ref{sect:orthogX}, the improvement is dependent on the
  distribution of the scatterers in the imaging region.} then SMV
over one sub-aperture, because the rows of the unknown matrix $\bX$
are approximately orthogonal when the points in its support are
at distances of order $\ola |r_o-\oby|/A \ll
\ola |r_o-\oby|/a.$ The numerical simulations in the next section demonstrate that
this is the case, as well.

When the scatterers have a stronger dependence on direction, the SMV
approach over the large aperture does not work well. Aperture
segmentation is needed to avoid systematic modeling errors in the
optimization. While we may apply the SMV approach for a single
sub-aperture, Proposition \ref{prop:SARort} and the results in section
\ref{sect:orthogX} show that the MMV method performs better.

\subsection{Numerical results}
\label{sect:applic1.4}
We present here numerical results that illustrate the
theory presented in section \ref{sect:theory2}. We begin in section
\ref{sect:gain} with a computational assessment of the reduction of
the multiple view interaction coefficient with respect to the single
view one, in the case of orthogonal rows of the unknown matrix
$\bX$.  Then we present in section \ref{sect:num1} imaging results,
using the parameters of the X-band GOTCHA SAR data set \cite{GOTCHA}:
The receive-transmit platform moves on a linear aperture $A = 1.5$km
at altitude $8$km, and with center $\obr_o$ at $7$km west of
$\oby$. The platform emits and receives signals every meter. The
central frequency is $10$GHz and since we only present imaging in
cross-range, the bandwidth plays no role. The waves propagate at 
speed $c = 3\cdot10^8$m/s.

The data are generated numerically using the single scattering
approximation. The additive noise matrix $\bW$ has mean zero and
independent complex Gaussian entries with standard deviation $\sigma$ given 
as a percent of the largest entry in $\bD$.
The optimization problem \eqref{eq:th11} is solved using the software
package CVX \cite{cvx}.

\subsubsection{Numerical illustration of effects of orthogonality of rows of $\bX$} 
\label{sect:gain}
The discussion in section \ref{sect:orthogX} says that if the points
in $\Omega_\cS$ are distributed evenly in the imaging
window $\Omega$, and the rows of $\bX$ are orthogonal, then the
multiple view interaction coefficient $\cI_{N_v}$ is smaller than
$\cI_1$, by a factor of order $\sqrt{|\cS|}$. Here we focus
attention on imaging in the cross-range direction, so the imaging
region is reduced to a line segment.  We cannot have a large
number of points with similar mutual separation on a line. Nevertheless, we
show that the numerically computed ratio $\cI_{1}/\cI_{N_v}$ increases with
$|S|$, at a  slightly slower rate than $\sqrt{|\cS|}$.

We display in Figure \ref{fig:illIvec} the ratio
$\cI_{1}/\cI_{N_v}$ computed for imaging scenes with $|S|$ ranging
from $4$ to $50$, and cross-range separation of nearby neighbors
chosen randomly, uniformly distributed in the interval $\big[\ola\, 
  \oL_o/A, 3 \ola\,  \oL_o /A\big]$, where $\oL_o = |\obr_o - \oby|$. The large aperture
$A$ is divided in sub-apertures of size $a = A/20$.  The rows of $\bX$
have length $50$ and are orthogonal, to stay within the setting
of section \ref{sect:orthogX}. 

The left plot in Figure \ref{fig:illIvec} shows the ratio
$\cI_{1}/\big(\cI_{N_v}\sqrt{|\cS|}\big)$ computed for one realization
of the imaging scene. We note that the increase of
$\cI_{1}/\cI_{N_v}$ with $|\cS|$ is slightly slower than
$\sqrt{|\cS|}$.  The histograms in Figure \ref{fig:illIvec}, computed
for $2500$ realizations of the imaging scene, also show that the ratio
is slightly less than $\sqrt{|\cS|}$.

\begin{figure}[t]
\begin{center}
\includegraphics[width=0.23\columnwidth]{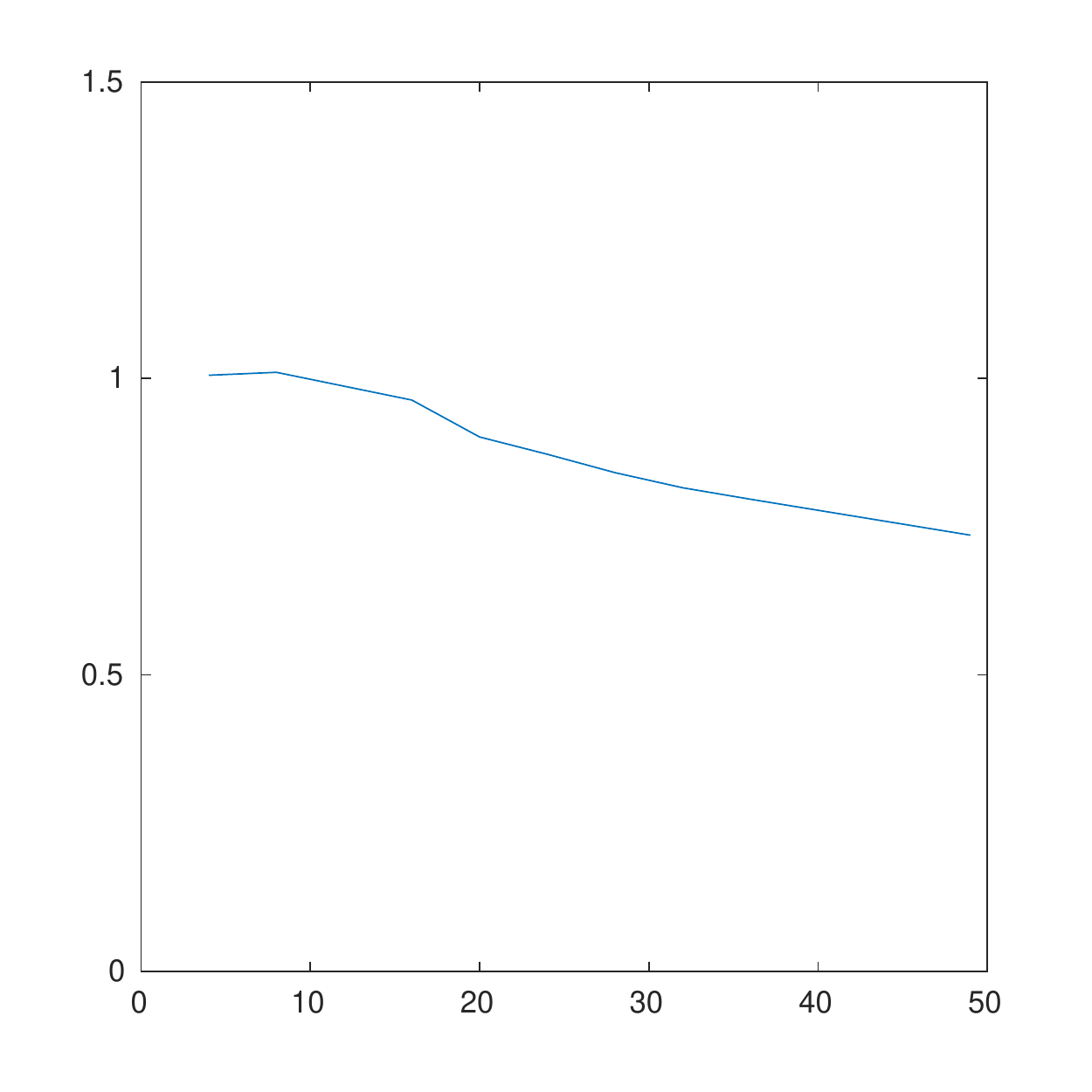}
\includegraphics[width=0.23\columnwidth]{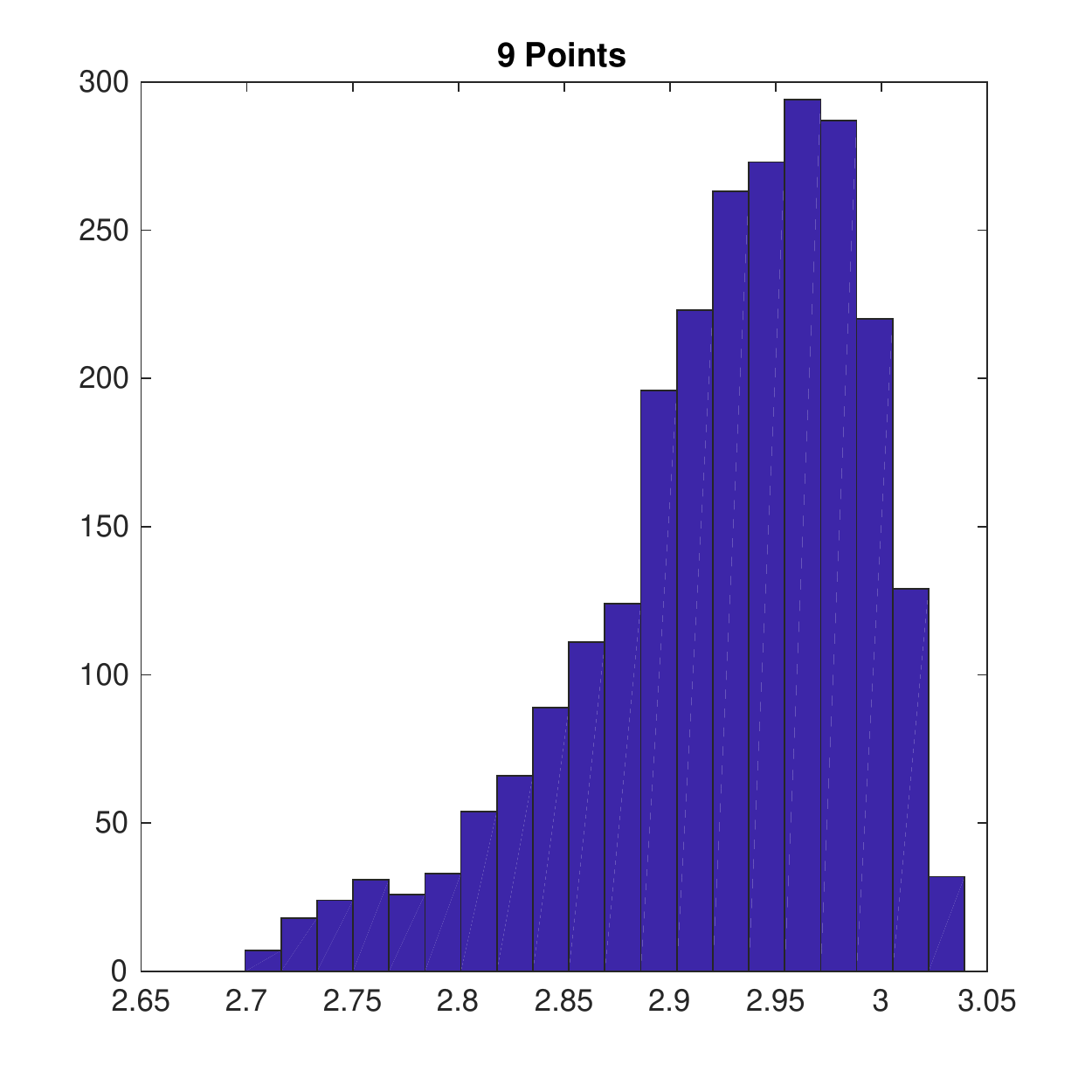}
\includegraphics[width=0.23\columnwidth]{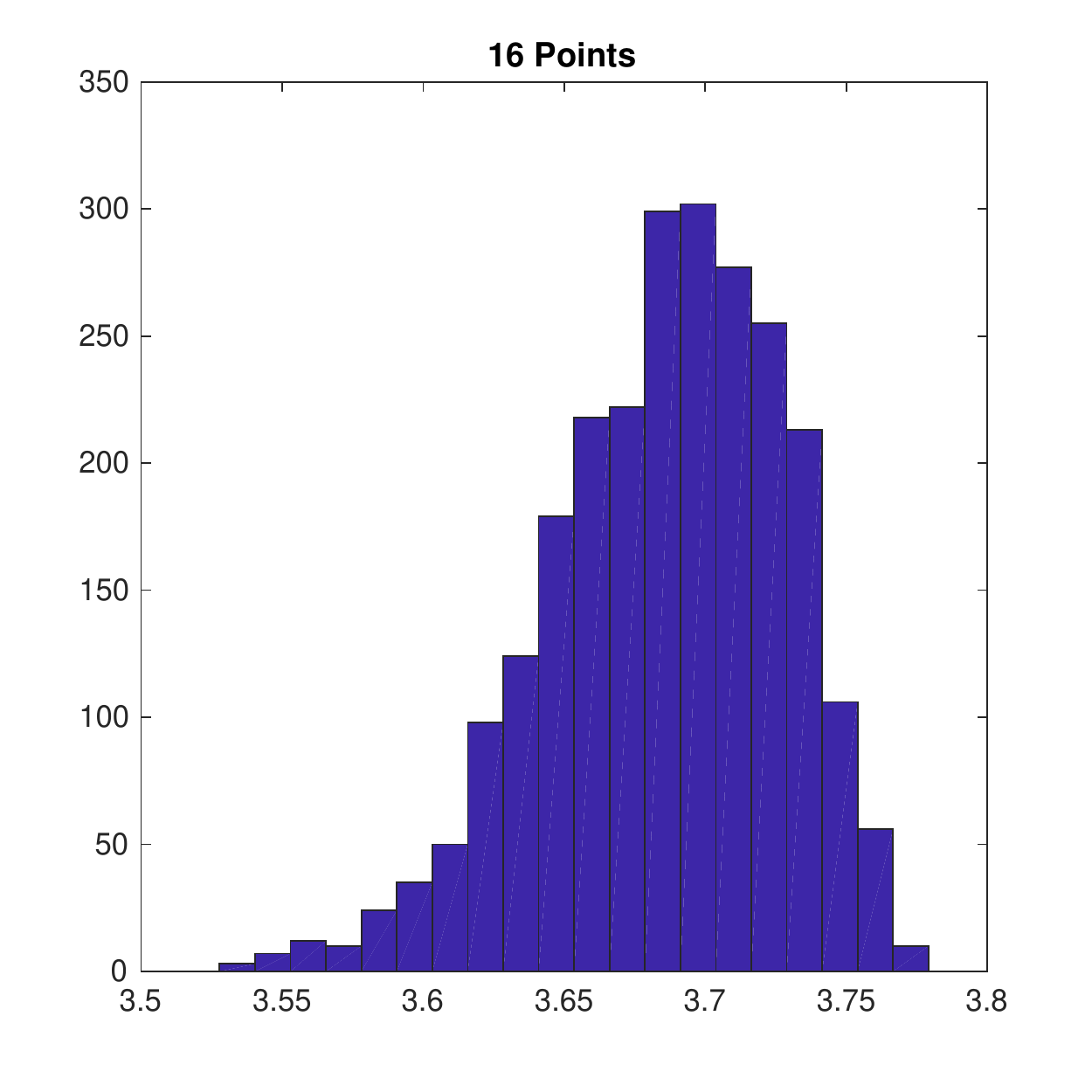}
\includegraphics[width=0.23\columnwidth]{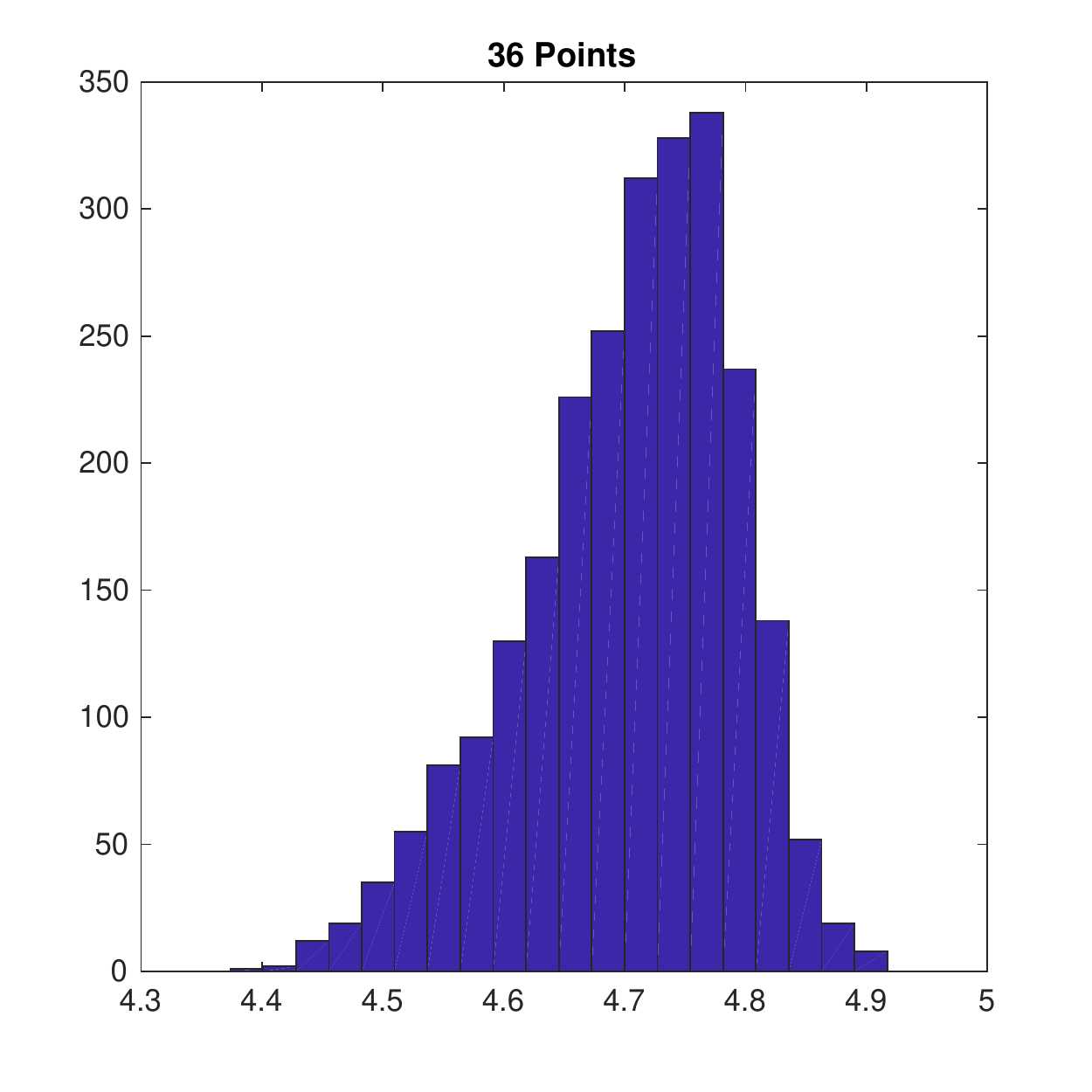}
\end{center}
\vspace{-0.12in}
\caption{Left plot: The ratio $\frac{\cI_{1}}{\cI_{N_v}\sqrt{|\cS|}}$ vs. $|S|$
  in the abscissa. The other plots: Histograms of the ratio ${\cI_{1}}/{\cI_{N_v}}$ for
  $2500$ realizations of the imaging scene. From left to right
  $|\cS|$ equals $9$, $16$ and $36$. The ordinate shows the number of
  realizations and the abscissa is the value of ${\cI_{1}}/{\cI_{N_v}}$.}
\label{fig:illIvec}
\end{figure}

\subsubsection{Imaging results}
\label{sect:num1}

\begin{figure}
\begin{center}
\includegraphics[scale=0.32]{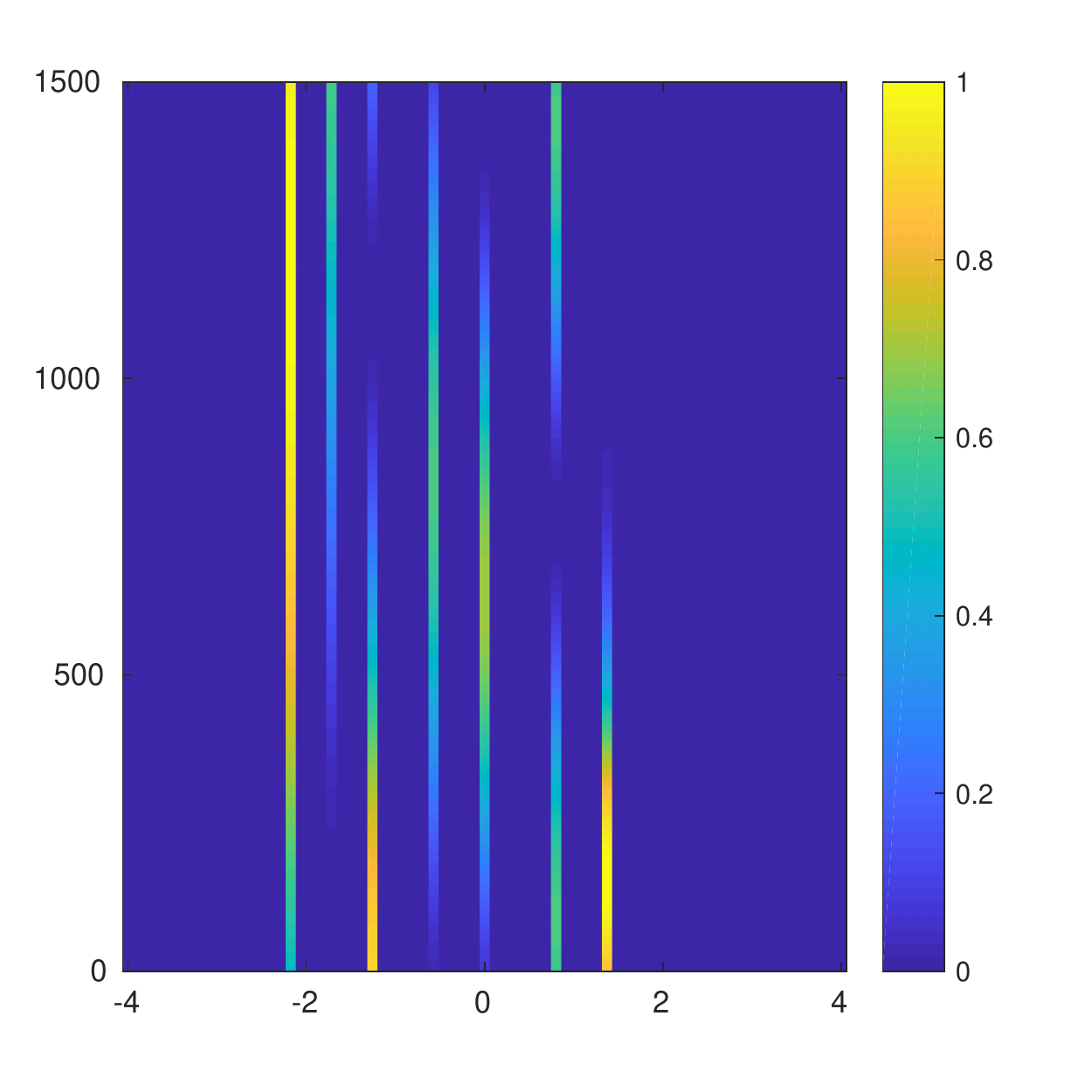}\hspace{-0.1in}
\includegraphics[scale=0.32]{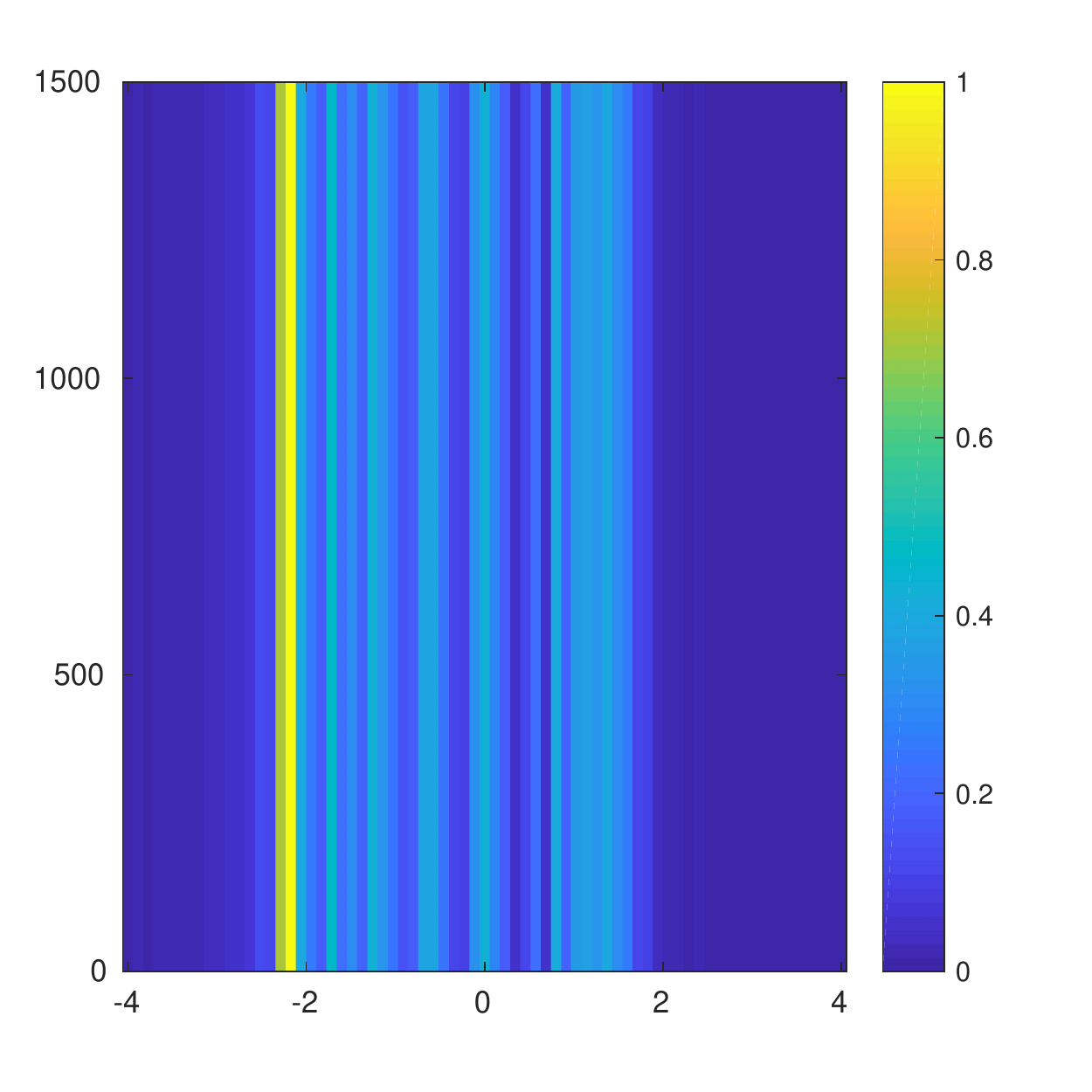}\hspace{-0.1in}
\includegraphics[scale=0.32]{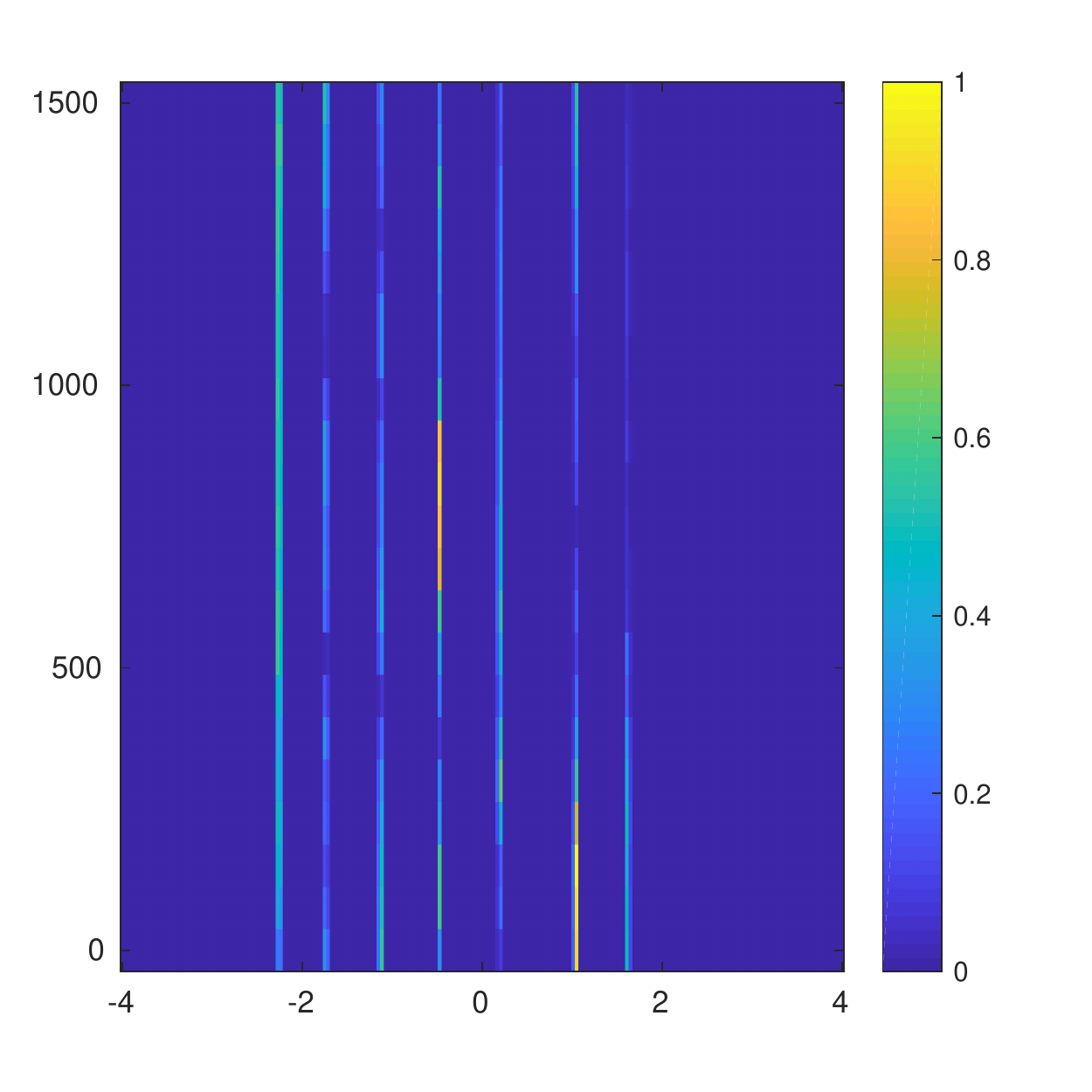}\hspace{-0.1in}
\includegraphics[scale=0.32]{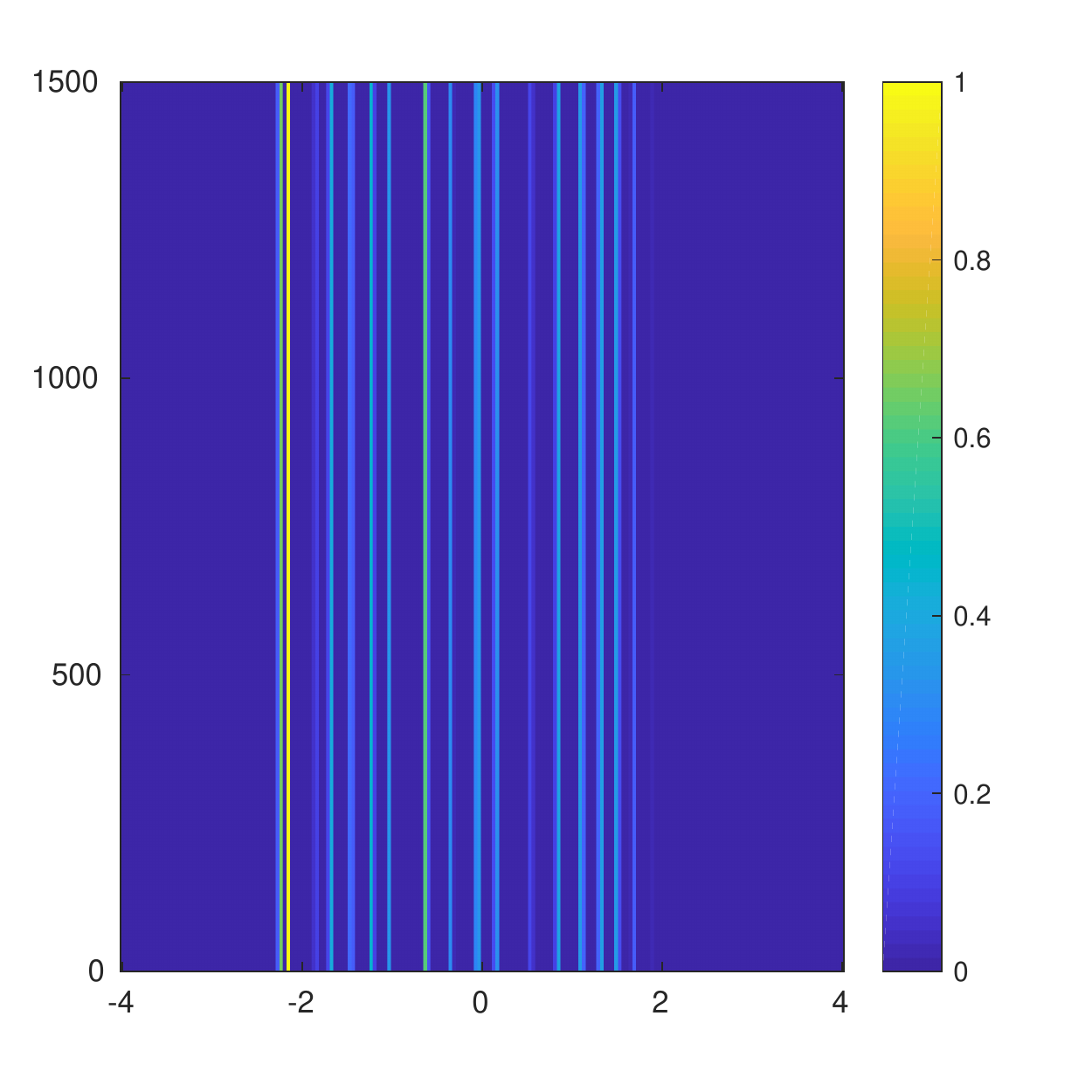}
\end{center}
\vspace{-0.12in}
\caption{From left to right:  (1) Exact reflectivity function as viewed from the
  location on the flight path (ordinate, in meters) vs.  the
  cross-range location in the imaging scene (abscissa, in units $\ola\,
  \oL_o/A$).  (2) The conventional SAR image
  \eqref{eq:SARmig} calculated over the entire aperture.  (3) The MMV reconstruction. (4) The SMV
  reconstruction.}
\label{fig:Image1}
\end{figure}

We begin with a comparison of imaging results obtained with the MMV
optimization formulation \eqref{eq:th11} for $N_v = 24$, the SMV
formulation for $N_v = 1$, and the conventional SAR image. The latter is given by the
superposition of the down-ramped data \eqref{eq:SA2}, synchronized using the 
round-trip travel time of the waves from the radar platform to the imaging point
\begin{equation}
I^{SAR}(\by;\obr) = \sum_{j = 1}^{N_r} d\big(\br_j,t=2|\br_j-\by|/c;\obr,\oom\big).
\label{eq:SARmig}
\end{equation}
The superposition may be over the entire aperture centered at $\obr =
\obr_o$, in which case $N_r = 1500$, or over a sub-aperture, centered at
$\obr = \obr_v$ for $v = 1, \ldots, N_v$, in which case $N_r = 300$.
The sub-aperture length is $a = A/6 = 300$m, and the spacing between
the sub-apertures is $50$m, center to center.  The results in Figures
\ref{fig:Image1}--\ref{fig:Image3} are for noiseless data and in
Figure \ref{fig:Image3Noise} we consider noise with 
standard deviation $\sigma = 10\%$.

The images in Figure \ref{fig:Image1} are obtained for a scene with $6$ small scatterers
at cross-range locations spaced by distances of approximately $\ola \, \oL_o/A$.
The exact reflectivity is shown in the left plot.
The SAR image \eqref{eq:SARmig} computed over the entire aperture $A =
1.5$km is shown in the second plot. Note that this treats the reflectivity as isotropic (i.e., constant 
along the ordinate). It does not resolve well the
location of the five scatterers that are visible only on about a sixth
of $A$, but it obtains a large peak for the one scatterer with
reflectivity that varies less with direction. 
The MMV image  recovers exactly the
support of the scatterers, whereas the SMV method has many spurious
peaks.  This is an illustration of the result in section \ref{sect:quant}, which says that MMV may give a
better estimate of the support of the scatterers. However, the
estimate of the value of the reflectivity is not accurate, unless the
scatterers are further apart.

In Figure \ref{fig:Image3} we consider reflectivities that vary more
rapidly over directions, and compare the effect of the size of the
sub-aperture on the quality of the reconstructions with the MMV
approach. The images show that the best
reconstruction is for $a = 70$m, which corresponds roughly with the
scale of variation of the true reflectivity in the top plot. For the
smaller aperture $a = 40$m (left, bottom plot) the reconstructed
support is close but not exact, whereas for the larger aperture $a =
100$m (right, bottom plot) the image has spurious peaks caused by the
systematic error due to the reflectivity varying on a smaller scale
than the sub-aperture. Thus, we conclude that in order to image
successfully direction dependent reflectivities, it is necessary to have
a good estimate of their scale of variation, so that the
aperture is properly segmented.

In Figure \ref{fig:Image3}  we display the effect of additive noise with standard deviation $\sigma = 10\%$ on  the MMV reconstruction of the reflectivity, 
for sub-aperture size $a = 70$m. 
We note that  for such  noise the support of the reconstruction is basically unchanged
and the values of the reflectivity are only slightly different. Naturally, at higher noise levels,
the reconstruction will be worse.

\begin{figure}[t]
\begin{center}
\includegraphics[scale=0.32]{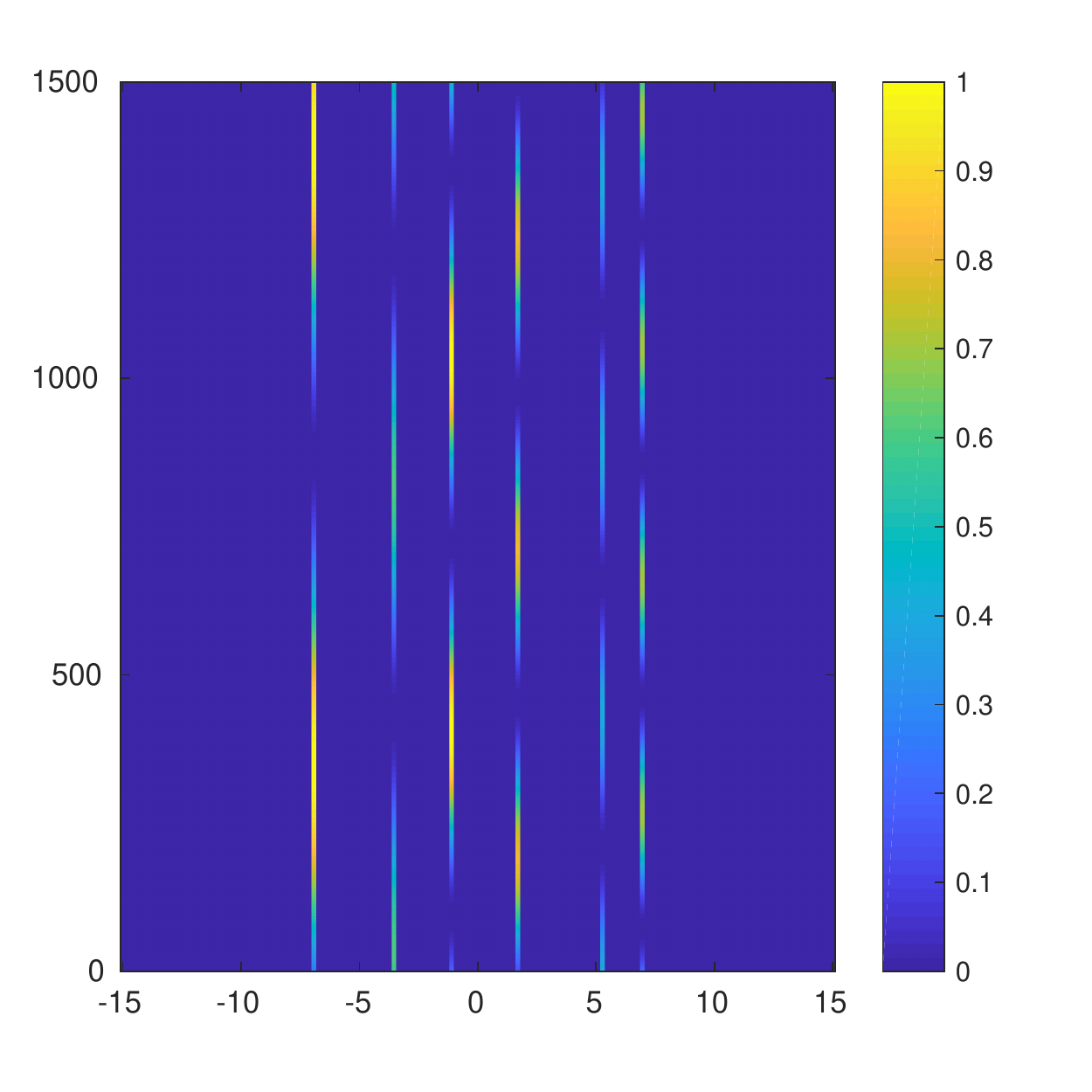} \hspace{-0.1in}
\includegraphics[scale=0.32]{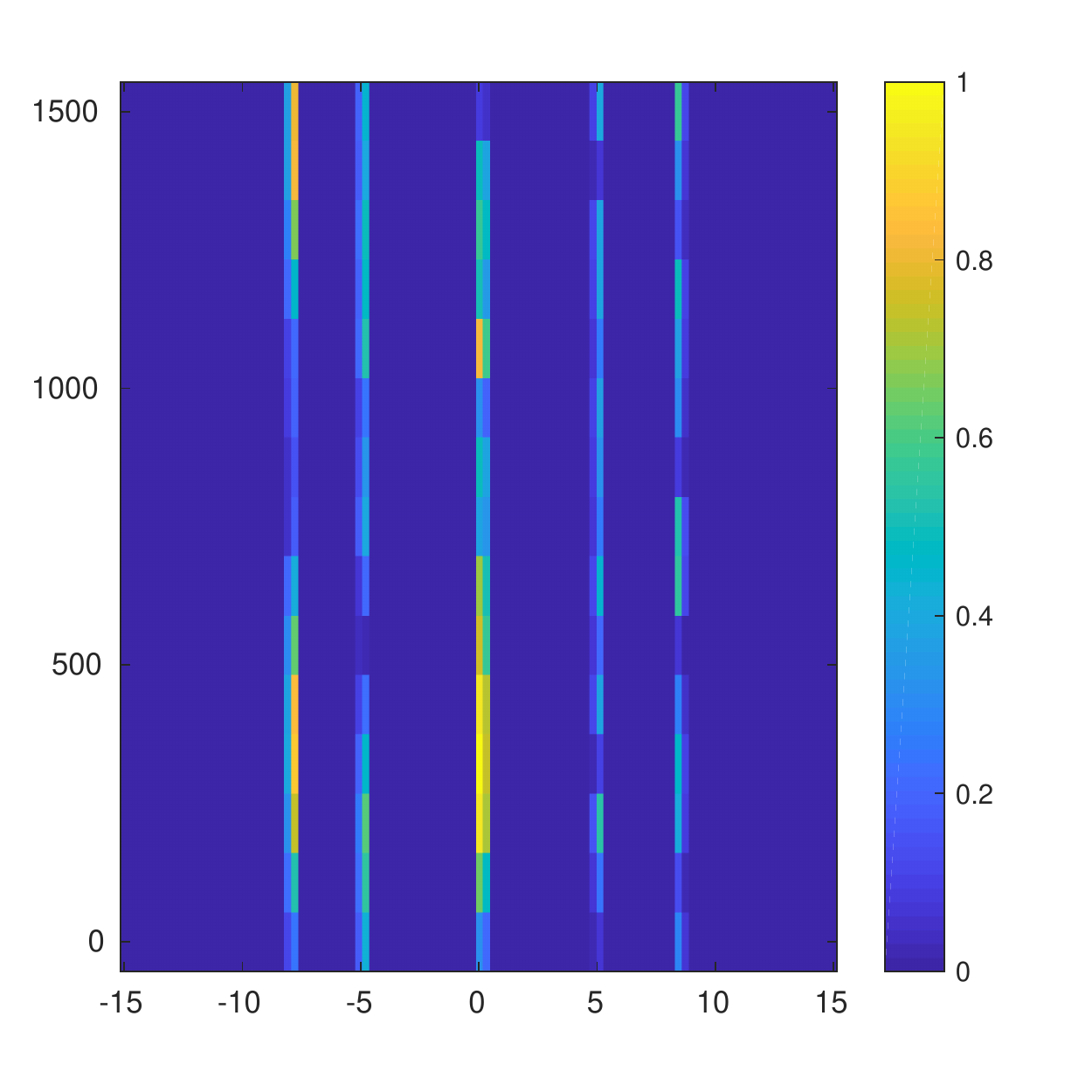}\hspace{-0.1in}
\includegraphics[scale=0.32]{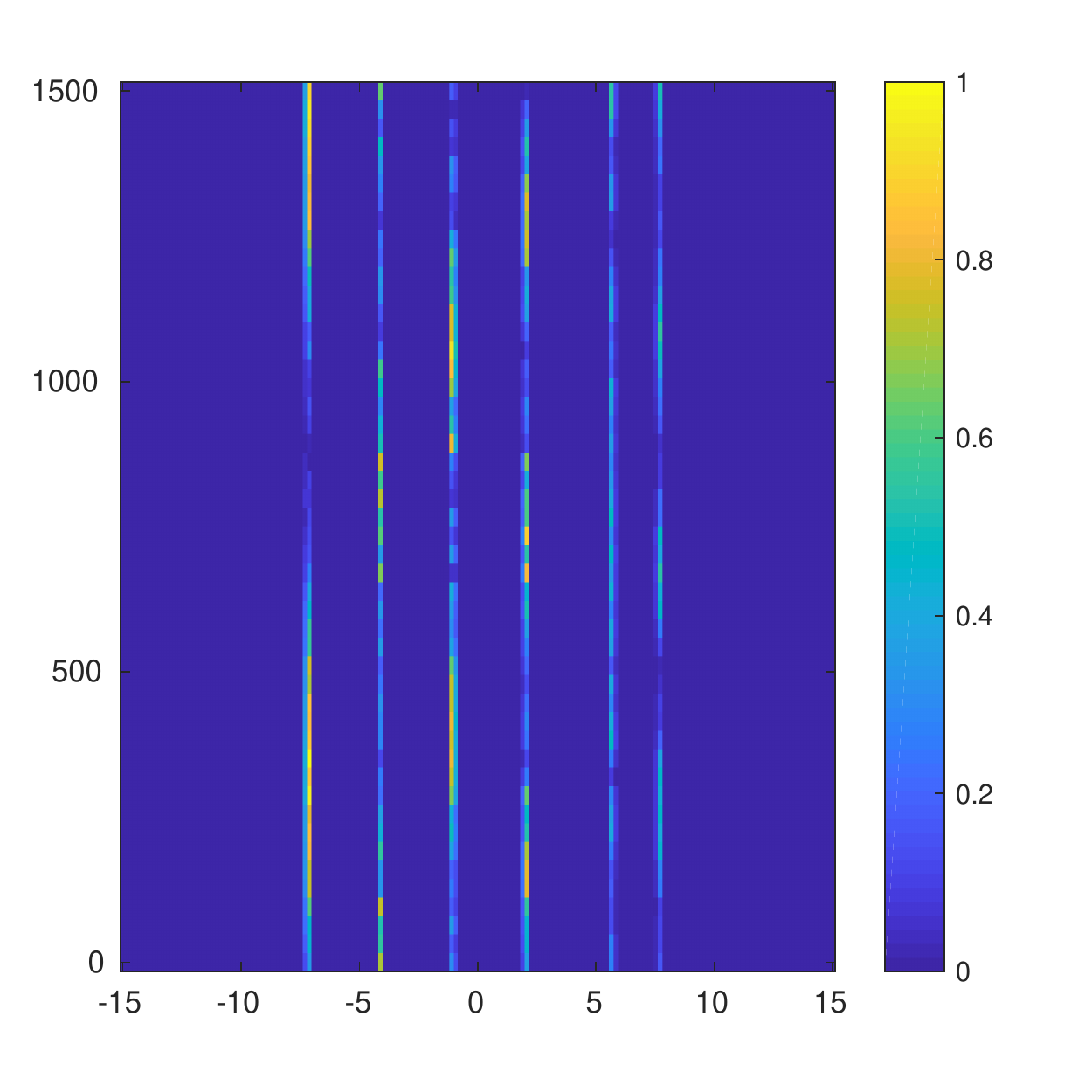}\hspace{-0.1in}
\includegraphics[scale=0.32]{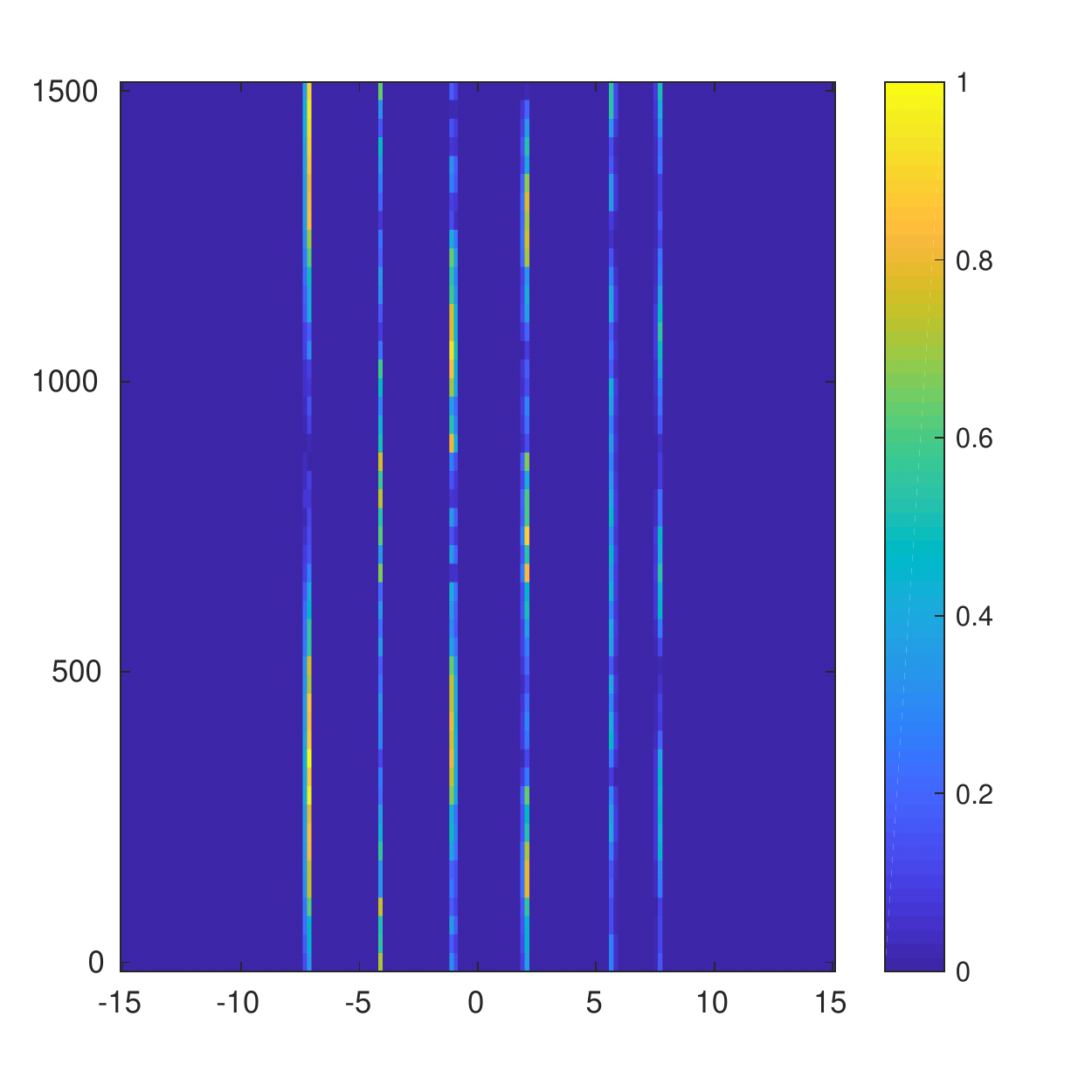}
\end{center}
\vspace{-0.12in}
\caption{Left plot: Exact reflectivity function as viewed from the
  location on the flight path (ordinate, in meters), vs.  the
  cross-range location in the imaging scene (abscissa, in units $\ola \, 
  \oL_o/A$).  Other plots: The MMV reconstruction for apertures $a =
  50$m, $70$m and $100$m, from left to right. }
\label{fig:Image3}
\end{figure}

\begin{figure}
\begin{center}
\includegraphics[scale=0.32]{Figures/Ex3_MMV70_30}
\includegraphics[scale=0.32]{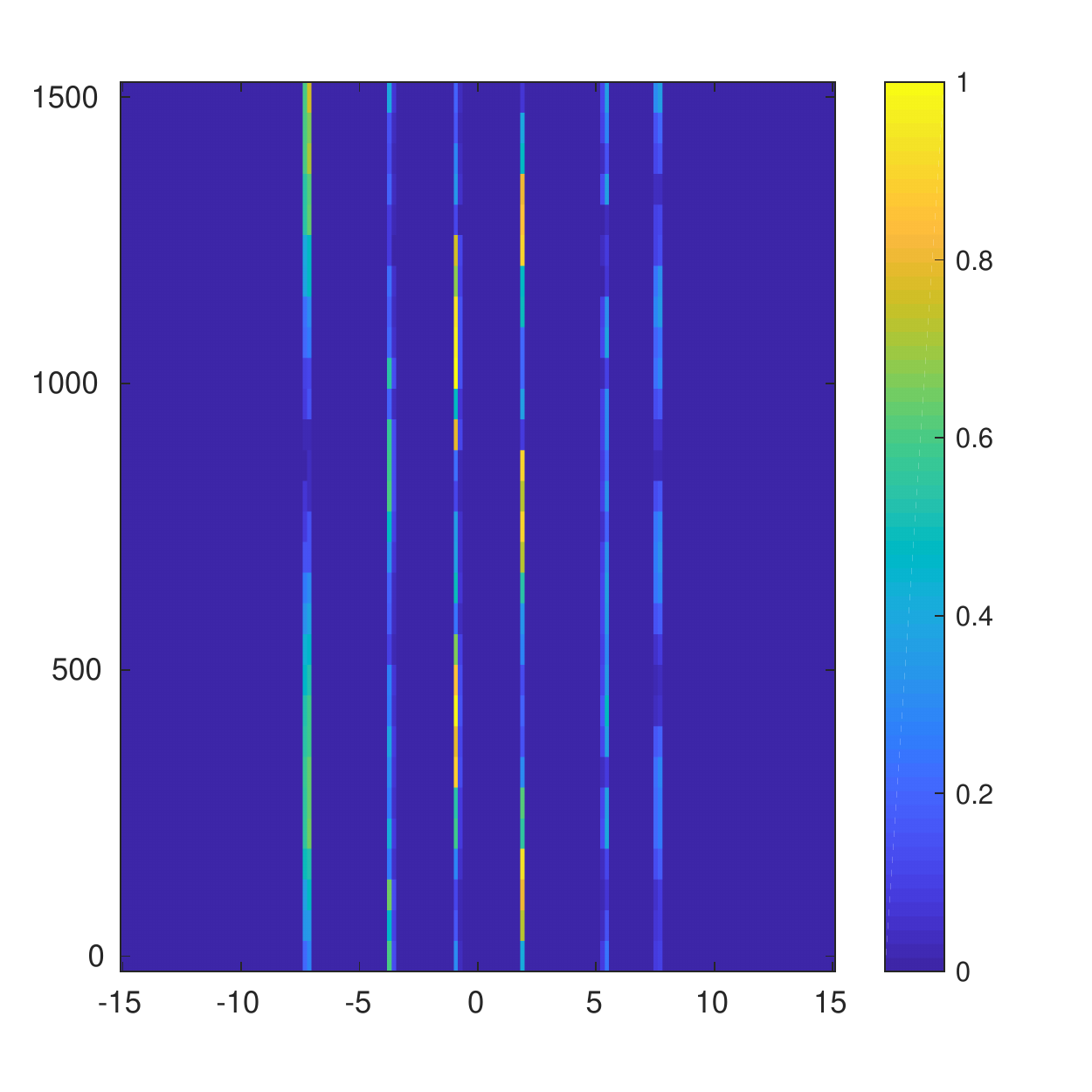}
\end{center}
\vspace{-0.12in}
\caption{MMV reconstructions with noiseless data (left) and noisy data (right).
The noise is additive, complex Gaussian, with mean zero independent entries and standard deviation 
$\sigma = 10\%$ of the largest entry in $\bD$. The axes are as in Figure \ref{fig:Image1}.}
\label{fig:Image3Noise}
\end{figure}

\section{SAR imaging with polarization diverse measurements}
\label{sect:applic2}
In this section we describe  briefly the application of  SAR imaging with polarization.  
We begin in section \ref{sect:Pol1} with the derivation of the data model \eqref{eq:th1}
used in the MMV formulation and then show numerical results in section 
\ref{sect:Pol2}.
\subsection{Data model}
\label{sect:Pol1}
Consider  a collection of $|\cS|$
penetrable scatterers, with volume smaller than $\ola^3$ by a
factor $\alpha \ll 1$, so that the scattered electric field at the
SAR platform can be modeled by \cite{ammari2007music}
\begin{equation}
\boldsymbol{\mathscr{E}}(\br_j,t; \obr, {\itbf f}) = \int \frac{d
  \om}{2 \pi} e^{-i \om t} i k^3(\om) \sqrt{\frac{\mu}{\epsilon}}
\sum_{q \in \cS} \hat{\boldsymbol{\mathscr{G}}}(\om,\br_j,\by_q)
\brho_q(\obr) \hat{\boldsymbol{\mathscr{G}}}(\om,\by_q,\br_j) \hat{\itbf
  f}(\om) + O(\alpha^4),
\label{eq:Pol1}
\end{equation}
where $\ola$ is the central wavelength and $\mu$ and $\epsilon$ are the magnetic permeability and the
electric permittivity in the medium. These define the wave speed $c =
1/\sqrt{\mu \epsilon}$ and the wavenumber $k(\om) = \om/c$. The
scatterers are represented in \eqref{eq:Pol1} by their center location
$\by_q$ and their reflectivity tensor assumed constant over the
sub-aperture centered at $\obr$,
\begin{equation}
\brho_q(\obr) = \alpha^3 \left(\frac{\epsilon_q}{\epsilon} -1 \right)
     {\itbf M}_q(\obr),
\label{eq:Pol2}
\end{equation}
where $\epsilon_q$ is the electric permittivity in the scatterer and
${\itbf M}_q$ is its $\alpha$--independent polarization tensor. We
refer to \cite{ammari2013mathematical} for details on ${\itbf M}_q$,
which depends on the shape of the scatterer. Here we assume that it is
a real valued $3 \times 3$ symmetric matrix. Since we consider a fixed central frequency $\oom$, 
we suppress in the notation the dependence of $\brho_q$ on $\oom$. We also neglect the
variation of the magnetic permeability in the scatterer, although this
can be taken into account, as shown in \cite{ammari2013mathematical}.

The wave propagation from the radar platform to the scatterers and
back is modeled in \eqref{eq:Pol1} by the dyadic Green's tensor
\begin{equation}
\hat{\boldsymbol{\mathscr{G}}}(\om,\br,\by) = \left({\itbf I} + \frac{\nabla
  \nabla^T}{k^2(\om)}\right) \frac{\exp[i k(\om) |\br-\by|]}{4 \pi
  |\br-\by|},
\label{eq:Pol4}
\end{equation}
where ${\itbf I}$ is the $3 \times 3$ identity matrix.  The wave
excitation is modeled by the vector $\hat{\itbf f}$. To avoid a lengthy discussion\footnote{In fact, only the transverse 
components of the electric field, in the plane orthogonal to the range direction $\obr - \oby$, play a role in the end, as discussed at the 
end of this section.} suppose
that the radar  emits and receives all possible
polarizations, so that we have access to the $3 \times 3$ frequency
dependent data matrix
\begin{equation}
\hat{\boldsymbol{\mathscr{D}}}(\br_j,\om;\obr) \approx \sum_{q = 1}^{N_\by}
\hat{\boldsymbol{\mathscr{G}}}(\om,\br_j,\by_q) \brho_q(\obr)
\hat{\boldsymbol{\mathscr{G}}}(\om,\by_q,\br_j),
\label{eq:Pol3}
\end{equation}
with the approximation due to the neglected $O(\alpha^4)$
residual. Here we sum over all the $N_\by$
points in the imaging region, with the convention that $\brho_q = 0$
for $q \notin \cS$.

As in the previous section, we focus attention on imaging in the
cross-range direction. This is why it is sufficient to consider a single
frequency, equal to the central one $\oom$. The wave number at this
frequency is denoted by $\ok$, as in the previous section.

\begin{figure}[t]
\begin{center}
\includegraphics[width=0.35\textwidth]{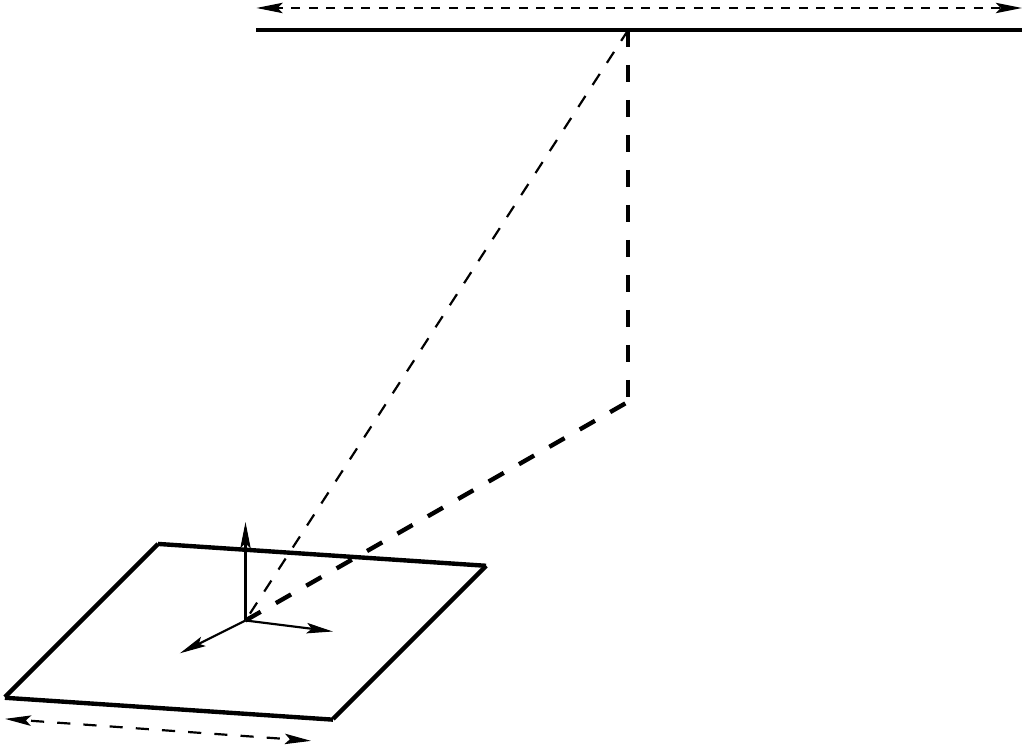}
\end{center}
\setlength{\unitlength}{3947sp}%
\begingroup\makeatletter\ifx\SetFigFont\undefined%
\gdef\SetFigFont#1#2#3#4#5{
  \fontfamily{#3}\fontseries{#4}\fontshape{#5}
  \fi\endgroup%
\begin{picture}(9770,1095)(1568,-1569)
\put(5150,750){\makebox(0,0)[lb]{\smash{{\SetFigFont{7}{8.4}{\familydefault}{\mddefault}{\updefault}{\color[rgb]{0,0,0}{\normalsize $\oL$}}%
}}}}
\put(5850,850){\makebox(0,0)[lb]{\smash{{\SetFigFont{7}{8.4}{\familydefault}{\mddefault}{\updefault}{\color[rgb]{0,0,0}{\normalsize $h$}}%
}}}}
\put(5800,1550){\makebox(0,0)[lb]{\smash{{\SetFigFont{7}{8.4}{\familydefault}{\mddefault}{\updefault}{\color[rgb]{0,0,0}{\normalsize $a$}}%
}}}}
\put(5830,1300){\makebox(0,0)[lb]{\smash{{\SetFigFont{7}{8.4}{\familydefault}{\mddefault}{\updefault}{\color[rgb]{0,0,0}{\normalsize $\obr$}}%
}}}}
\put(4250,-600){\makebox(0,0)[lb]{\smash{{\SetFigFont{7}{8.4}{\familydefault}{\mddefault}{\updefault}{\color[rgb]{0,0,0}{\normalsize $Y^\perp$}}%
}}}}
\put(4850,-250){\makebox(0,0)[lb]{\smash{{\SetFigFont{7}{8.4}{\familydefault}{\mddefault}{\updefault}{\color[rgb]{0,0,0}{\normalsize $\bu_2$}}%
}}}}
\put(4380,-170){\makebox(0,0)[lb]{\smash{{\SetFigFont{7}{8.4}{\familydefault}{\mddefault}{\updefault}{\color[rgb]{0,0,0}{\normalsize $\bu_1$}}%
}}}}
\put(4550,160){\makebox(0,0)[lb]{\smash{{\SetFigFont{7}{8.4}{\familydefault}{\mddefault}{\updefault}{\color[rgb]{0,0,0}{\normalsize $\bu_3$}}%
}}}}
\put(4680,-290){\makebox(0,0)[lb]{\smash{{\SetFigFont{7}{8.4}{\familydefault}{\mddefault}{\updefault}{\color[rgb]{0,0,0}{\normalsize $\oby$}}%
}}}}
\end{picture}%
\vspace{-0.8in}
\caption{Geometry of the data acquisition. The radar platform flies at
  elevation $h$ from the plane surface containing the imaging region
  $\Omega$, centered at $\oby$. The distance $\oL$ from the center
  $\obr$ of the aperture to $\oby$ is order $h$. The drawing is not up
  to scale, as the aperture $a$ and side $Y^\perp$ of the imaging region are
  much smaller than $\oL$.  }
\label{fig:PolSETUP}
\end{figure}

The sub-aperture centered at $\obr$ is
linear, of length $a$,  like before, and we assume for simplicity that it is at
constant altitude $h$, as shown in Figure \ref{fig:PolSETUP}. We let
$\bu_3$ be the unit vector in the vertical direction, and introduce
the unit vector $\bu_1 = \btau \times \bu_3$, where $\btau$ is the
unit tangent to the aperture, orthogonal to $\bu_3$.  The imaging region 
$\Omega$ is in the 
plane spanned by $\bu_1$ and $\btau$. We are interested in its cross-section 
in the  direction  of the aperture, which is the cross-range 
interval centered at $\oby$, of length $Y^\perp$. 

In the system of coordinates
with center at $\oby$ and orthonormal basis $\{\bu_j\}_{1\le j \le
  3}$ with $\bu_2 = \btau$, we have
$
\br = r_1 \bu_1 + r_2 \bu_2 + h \bu_3$ and  $\by =  y_2 \bu_2,$
for all $\br$ in the aperture and $\by$ in the cross-range 
imaging interval.  We also represent
the symmetric $3\times 3$ reflectivity tensor $\brho_q(\obr)$ by the
$1\times 6$ row-vector formed with the entries in its upper-tridiagonal
part 
\[
\brho_{q\rightarrow} = (\rho_{q,11}, \rho_{q,22},
\rho_{q,33},\rho_{q,12},\rho_{q,13}, \rho_{q,23}), \quad \rho_{q,jl} =
\bu_j^T \brho_q \bu_l.
\]

The scaling regime is as in the previous section, with length scales
ordered as $\ola \ll Y^\perp \lesssim a \ll h$, satisfying 
$ \oL = |\obr|= O(h)$ and $ |r_j| = O(\oL)$, for $ j = 1, 2.$
The Green tensor \eqref{eq:Pol4} has the following approximation in this regime
\begin{align}
\hat{\boldsymbol{\mathscr{G}}}(\om,\br_j,\by_q) 
&\approx \frac{\exp[i \ok |\br_j-\by_q|]}{4 \pi
  \oL} \begin{pmatrix} 1 - \eta_1^2 & -\eta_1 \eta_2 & -\eta_1 \beta \\ -\eta_1\eta_2 & 1 -\eta_2^2& -\eta_2 \beta
  \\- \eta_1 \beta & -\eta_2 \beta & 1-\beta^2 \end{pmatrix},\quad \eta_j =
\overline{r}_j/\oL, ~ ~j = 1, 2,~ ~ \beta = h/\oL.
\label{eq:Pol6p}
\end{align}
Substituting it in \eqref{eq:Pol3}, and representing the symmetric
matrix $(4 \pi \oL)^2/\sqrt{N_r}
\hat{\boldsymbol{\mathscr{D}}}(\br_j,\oom;\obr)$ by the $1 \times 6$ row
vector formed with the entries in its upper triangular part, we obtain
the data model
\begin{align}
\bd_{j\rightarrow}(\obr) = \sum_{q=1}^{N_\by} \frac{\exp[2 i \ok
    |\br_j-\by_q|]}{\sqrt{N_r}}\brho_{q \rightarrow}(\obr)
\boldsymbol{\Gamma}(\obr), \quad j = 1, \ldots, N_r,
\label{eq:Pol8}
\end{align}
with  $ \obr= \overline{r}_1 \bu_1 + \overline{r}_2 \bu_2 + h \bu_3$  and constant matrix  $\boldsymbol{\Gamma} (\obr)$ given in 
Appendix \ref{ap:Gamma}.
This is a linear system of form \eqref{eq:th1},
for $N_v = 6$, data matrix $\bD \in \CC^{N_r\times 6}$ with rows
$\bd_{j\rightarrow}$, unknown matrix $\bX\in \CC^{N_{\by} \times 6}$
with rows
\begin{equation}
\bx_{q \rightarrow} = \brho_{q \rightarrow} \boldsymbol{\Gamma},
\label{eq:defXrow}
\end{equation}
and sensing matrix $\bG$ with normalized
columns $\bg_q = \frac{1}{\sqrt{N_r}} \Big(\exp[2 i \ok |\br_1-\by_q|],
\ldots, \exp[2i \ok |\br_{N_r}-\by_q|] \Big)^T$.  

The system \eqref{eq:Pol8} is for a single
sub-aperture. More sub-apertures, centered at $\obr_v$, can be taken
into account as explained in the previous section, with the only
difference being that instead of having a scalar unknown, we now
have the unknown $1\times 6$ row vector
$\brho_q(\obr_v) \boldsymbol{\Gamma}(\obr_v).$ The linear system that
fuses the data from all the sub-apertures is obtained as in section
\ref{sect:applic1.2}, and the unknown matrix $\bX$ has six times more
columns than in the acoustic case.

Note that the approximation  \eqref{eq:Pol6p} of the Green's tensor $\hat{\boldsymbol{\mathscr{G}}}(\om,\br_j,\by_q)$ 
for the sub-aperture centered at $\obr$ has 
the one dimensional null space $\mbox{span}\{\obr\}$. This implies that 
the matrix $\boldsymbol{\Gamma} (\obr)$ is also singular, so we cannot determine uniquely
the reflectivity vectors $\brho_{q\rightarrow}$ from equation \eqref{eq:defXrow}. 
To be more explicit, we can represent the reflectivity tensor $\brho_q$ in \eqref{eq:Pol3} in the 
sub-aperture dependent orthonormal basis 
$\{\bv_j\}_{j=1,2,3}$ of eigenvectors of the  matrix in  \eqref{eq:Pol6p},  with $\bv_3 = \obr/|\obr|$. 
Then, we obtain that the components $\{\bv_j^T \brho_q \bv_3\}_{j=1,2,3}$ play no role in the data model \eqref{eq:Pol3}, so 
we can only estimate $(\bv_j^T \brho_q \bv_l)_{j,l = 1,2}$.   This 
ambiguity is due to  the scaling relation  $a/|\obr| \ll 1$ and it  implies that only the transverse components of the electric 
field  are needed in imaging, as the longitudinal component along $\bv_3$ adds no information.  
If the reflectivity tensor does not change over directions, or it changes slowly, then the ambiguity can be overcome by 
taking into consideration the multiple sub-apertures, because $\obr$ changes orientation from one sub-aperture to another.

\subsection{Numerical results}
\label{sect:Pol2}
\begin{figure}[t]
\begin{center}
\includegraphics[scale=0.5]{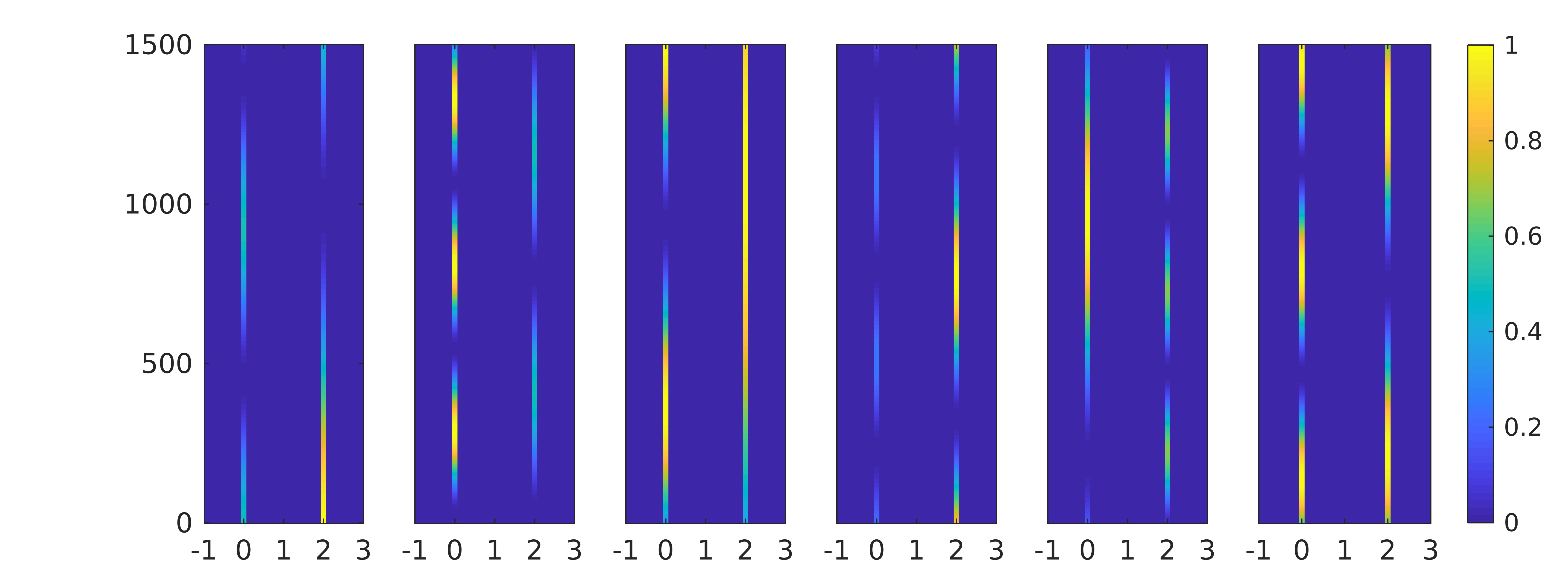}
\\ 
\includegraphics[scale=0.5]{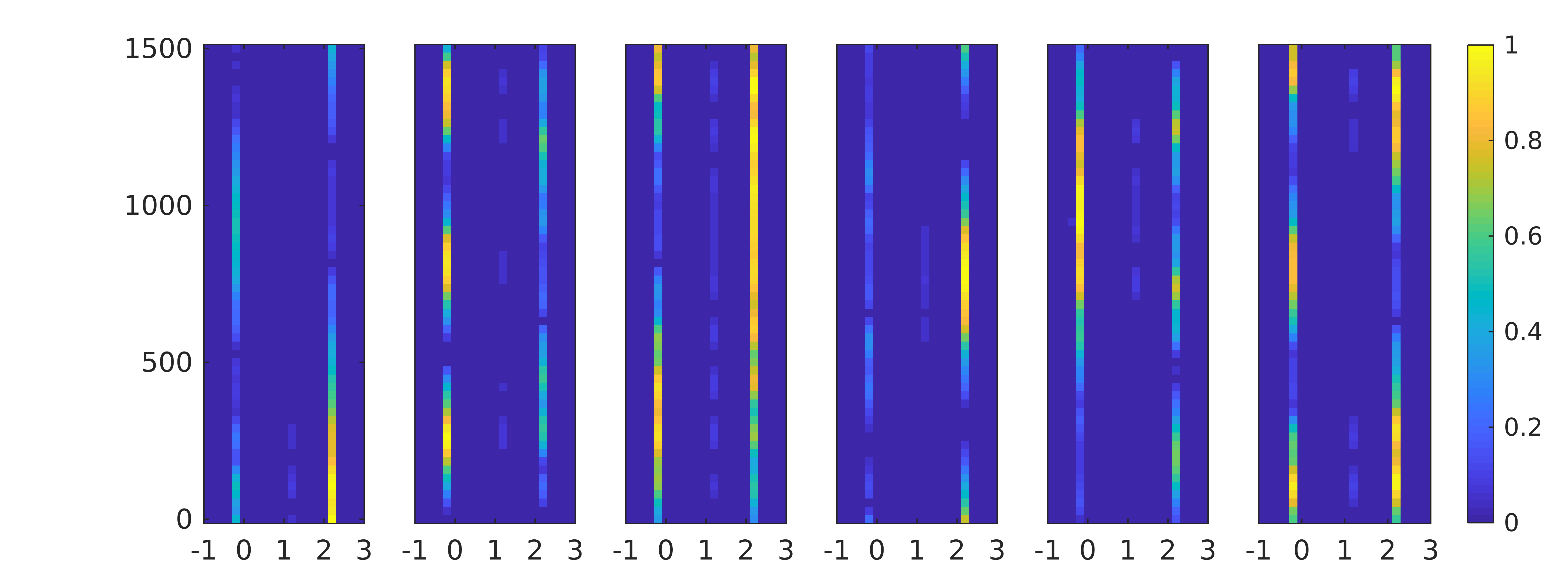}
\end{center}
\vspace{-0.1in}
\caption{Top line: From left to right we display all six components of
  the row vector $\bx_{q\rightarrow}(\obr)$ defined in
  \eqref{eq:defXrow}, as a function of location along the aperture
  (the ordinate in meters) and cross-range location indexed by $q$ in
  the imaging region (the abscissa, in units of $\ola \, 
  \oL_o/A$). Bottom line: The MMV reconstruction.
  }
\label{fig:Image4}
\end{figure}

\begin{figure}[h]
\begin{center}
\includegraphics[scale=0.36]{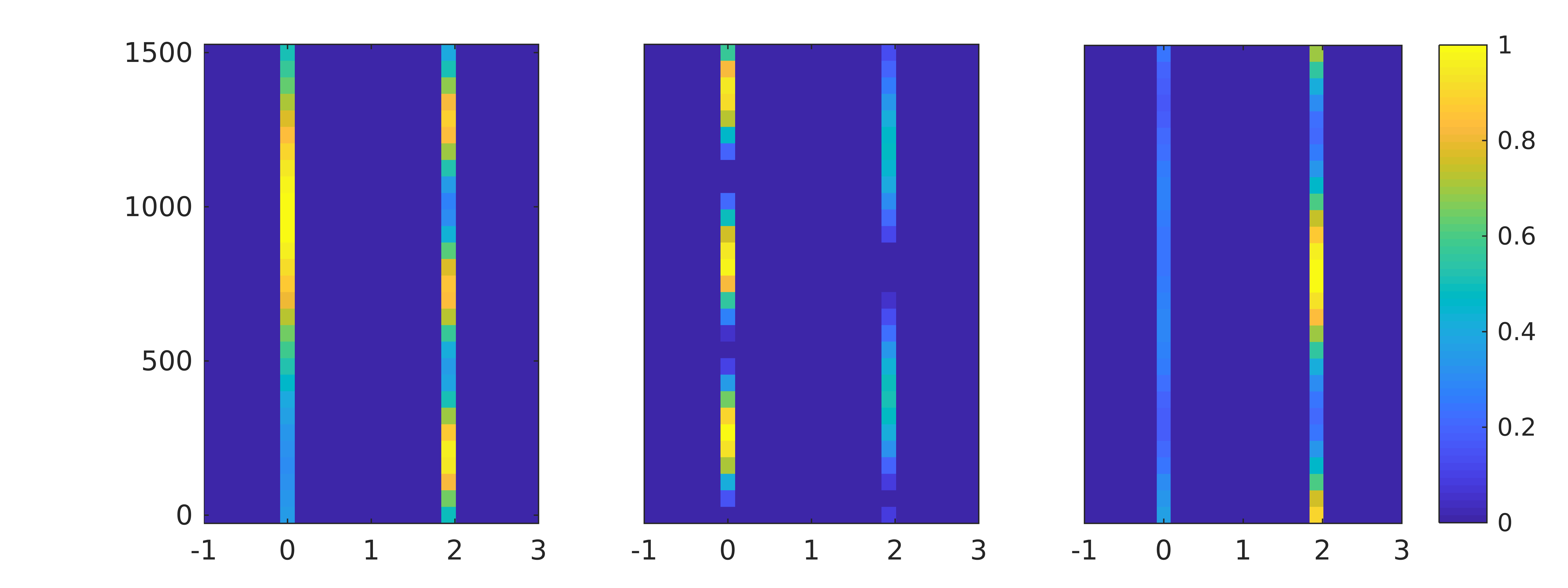}
\\ 
\includegraphics[scale=0.36]{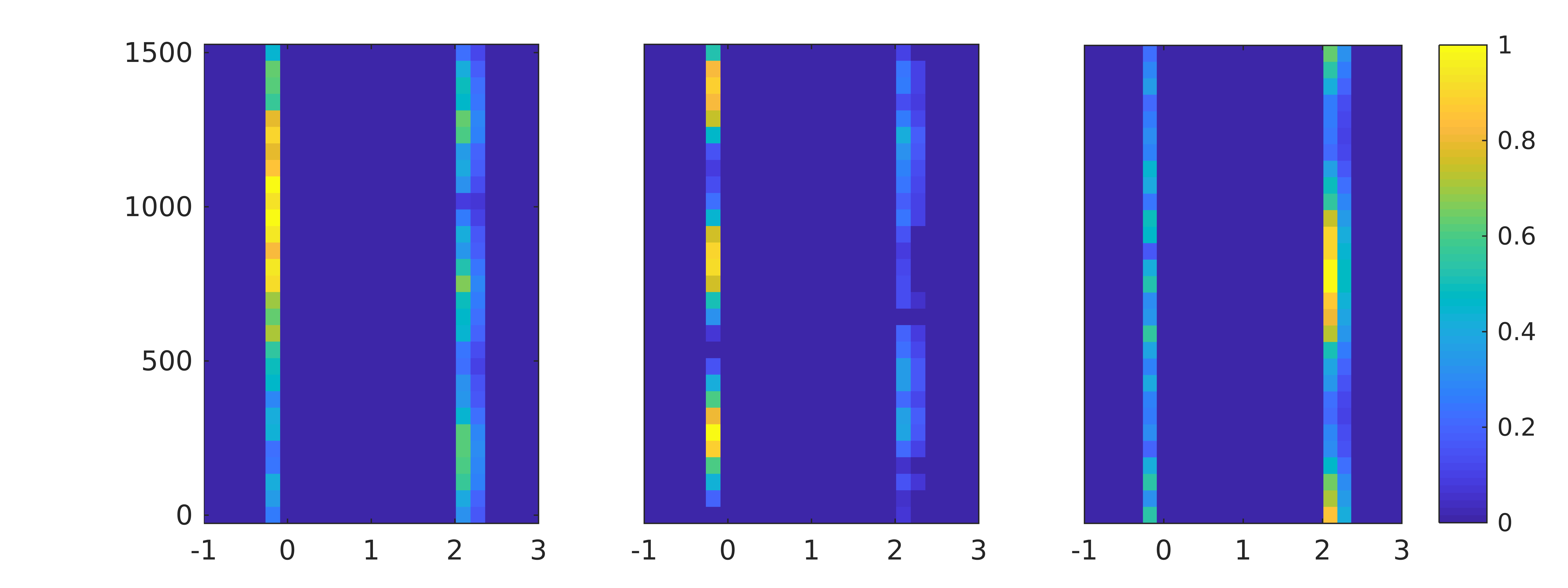}
\end{center}
\vspace{-0.1in}
\caption{Top line: The components $\bv_j^T \brho_q \bv_l$ 
of the reflectivity matrix, for $j=l=1$ (left plot), $j= l = 2$ (middle plot) and 
$j=1, l = 2$ (right plot). The orthonormal basis $(\bv_j)_{j=1,2,3}$ depends on the 
center location $\obr$ of the sub-aperture 
  (the ordinate in meters).  The abscissa is the cross-range location indexed by $q$,  in units of $\ola \, 
  \oL_o/A$. Bottom line: The reconstruction.
  }
\label{fig:Image5}
\end{figure}
The setup for the numerical results is the same as in
section \ref{sect:applic1.4}. The data are generated using the
single scattering model \eqref{eq:Pol1}, for a reflectivity function
that changes with the direction of illumination and  is
supported at two points at distance of order $\ola \, \oL_o/A$, where
$\oL_o = |\obr_o-\oby|$. 

We display in Figure \ref{fig:Image4}  the six entries of the row
vectors $\bx_{q \rightarrow}$ defined in \eqref{eq:defXrow}, 
as
$\obr$ varies in the large aperture, and for points in $\Omega$ indexed by
$q$, separated by distances  $\ola \, \oL_o/A$ in cross-range. The plots in the bottom
line of Figure \ref{fig:Image4} show that the MMV method gives good
estimates of these row vectors.
 
In Figure \ref{fig:Image5}  we display the components $(\bv_j^T \brho_q \bv_l)_{j,l=1,2}$  of the reflectivity matrix $\brho_{q}$ and its reconstruction, 
for each sub-aperture centered at $\obr$. As in the note at the end of the previous section,  
we let $\{\bv_j\}_{j=1,2,3}$ be the orthonormal basis of eigenvectors of the approximation \eqref{eq:Pol6p} of the Green's tensor,
with $\bv_3$ along $\obr$. The reconstruction displayed
in Figure \ref{fig:Image5} is calculated as follows: With the estimated vectors $\bx_{q \rightarrow}$ displayed in 
Figure \ref{fig:Image4} we calculate the minimum $\ell_2$ norm solution of \eqref{eq:defXrow}, using the truncated SVD of the 
singular matrix $\boldsymbol{\Gamma} (\obr)$. This corresponds to setting to zero the components $\bv_j^T \brho_q \bv_l$ 
of the estimated $\brho_q$, for either $j$ or $l$ equal to $3$. The other components are displayed in the figure, and they are well 
reconstructed.

\section{Proofs}
\label{sect:proofs}
Here we prove the results stated in section \ref{sect:theory2}. We
begin with a Lemma, in section \ref{sect:proof0}, which we then use in
sections \ref{sect:proof1} and \ref{sect:proof2} to prove Theorems
\ref{thm:1} and \ref{thm:2}.  Proposition \ref{prop:1} is proved in
section \ref{sect:proof3} and the results for the clusters are proved
in section \ref{sect:proof4}.
\subsection{A basic lemma}
\label{sect:proof0}

Let us denote by $\wh{\bX}$ the matrix obtained by normalizing the
nonzero rows in $\bX$, the unknown in the inverse problem,
$  \wh{\bx}_{q \rightarrow} = \frac{\bx_{q \rightarrow}}{\|\bx_{q
      \rightarrow}\|_2}$,  for $q \in \cS$. Introduce the linear operator 
      \begin{equation}
 \cL: \CC^{N_{\br} \times N_v}
\to \CC, \quad  \cL(\bV) = \mbox{tr}\Big[ (\bG \wh{\bX})^\star \bV \Big], \quad \forall \,
  \bV \in \CC^{N_{\br} \times N_v},
  \label{eq:Pf2}
\end{equation}
where $\mbox{tr}[\cdot]$ denotes the trace. We have the following result:

\vspace{0.05in}
\begin{lemma}
  \label{lem:1}
The linear operator $\cL$ defined in \eqref{eq:Pf2} satisfies the
inequality
\begin{equation}
  \big|\cL(\bV)\big| \le \|(\bG^\star \bV)_{\cS\rightarrow} \|_{1,2},
  \label{eq:Pf6}
\end{equation}
for any $\bV \in \CC^{N_{\br} \times N_v}$. The matrix $\bX$ satisfies
the inequality
\begin{equation}
  \|\bX\|_{1,2} \big(1 - \cI_{N_v}\big) \le \big| \cL(\bG \bX)\big|,
  \label{eq:Pf3}
\end{equation}
and with $r$, $\bX^{\ep,r}$ and $\bE^{\ep,r}$ defined as in Theorem
\ref{thm:1}, we have
\begin{align}
  \big|\cL(\bG \bX^{\ep,r})\big| &\le \big(1 + \cI_{N_v}\big) \| \bX^{\ep,r}\|_{1,2},
  \label{eq:Pf4} \\
  \big|\cL(\bG \bE^{\ep,r})\big| &\le \big(1 -r + \cI_{N_v}\big) \|
  \bE^{\ep,r}\|_{1,2}.
  \label{eq:Pf5}
\end{align}
\end{lemma}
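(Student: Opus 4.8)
The plan is to reduce all four inequalities to row‑by‑row estimates built on a single identity for $\cL$. First I would rewrite $\cL(\bV) = \mbox{tr}\big[\wh\bX^\star \bG^\star \bV\big]$ and, using that $\wh\bx_{q \rightarrow} = 0$ for $q \notin \cS$, obtain the expansion $\cL(\bV) = \sum_{q \in \cS} \lin (\bG^\star \bV)_{q \rightarrow}, \wh\bx_{q \rightarrow}\rin$; specializing to $\bV = \bG\bZ$ and using $(\bG^\star\bG)_{ql} = \mu(\bg_q,\bg_l)$ gives $\cL(\bG\bZ) = \sum_{q \in \cS}\sum_{l=1}^{N_\by}\mu(\bg_q,\bg_l)\lin \bz_{l\rightarrow},\wh\bx_{q\rightarrow}\rin$. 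From the first expansion, Cauchy–Schwarz in each row together with $\|\wh\bx_{q\rightarrow}\|_2 = 1$ gives $|\lin(\bG^\star\bV)_{q\rightarrow},\wh\bx_{q\rightarrow}\rin| \le \|(\bG^\star\bV)_{q\rightarrow}\|_2$, and summing over $q \in \cS$ yields \eqref{eq:Pf6}.

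For \eqref{eq:Pf3} I would use the second expansion with $\bZ = \bX$ and separate the diagonal terms $l = q$ from the rest. Since $\mu(\bg_q,\bg_q) = 1$ and $\lin\bx_{q\rightarrow},\wh\bx_{q\rightarrow}\rin = \|\bx_{q\rightarrow}\|_2$ is real and positive, the diagonal part of $\cL(\bG\bX)$ equals $\|\bX\|_{1,2}$ exactly. The off‑diagonal part has modulus at most $\sum_{q\in\cS}\sum_{l\in\cS\setminus\{q\}}|\mu(\bg_q,\bg_l)|\,\|\bx_{l\rightarrow}\|_2\,|\mu(\bx_{l\rightarrow},\bx_{q\rightarrow})|$; reorganizing this double sum by $l$ and noting that the nearest point of $\cS$ to $l \in \cS$ is $n(l) = l$, the inner sum over $q \in \cS\setminus\{l\}$ is bounded by $\cI_{N_v}$ via the definition \eqref{eq:th13} evaluated at $j = l$ and $\bvr = \bx_{l\rightarrow}$, so the off‑diagonal part is at most $\cI_{N_v}\|\bX\|_{1,2}$. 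The reverse triangle inequality then gives $|\cL(\bG\bX)| \ge (1-\cI_{N_v})\|\bX\|_{1,2}$.

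For \eqref{eq:Pf4} and \eqref{eq:Pf5} I would apply the second expansion to $\bZ = \bX^{\ep,r}$, resp. $\bZ = \bE^{\ep,r}$, rewritten as a sum over the rows $l$ in the support of $\bZ$, i.e. $\cL(\bG\bZ) = \sum_{l}\sum_{q\in\cS}\mu(\bg_q,\bg_l)\lin\bz_{l\rightarrow},\wh\bx_{q\rightarrow}\rin$. For each such $l$ I would split off the term $q = n(l)$, whose modulus is at most $|\mu(\bg_{n(l)},\bg_l)|\,\|\bz_{l\rightarrow}\|_2$ (using $\|\wh\bx_{n(l)\rightarrow}\|_2 = 1$), and bound the remaining sum over $q \in \cS\setminus\{n(l)\}$ by $\cI_{N_v}\|\bz_{l\rightarrow}\|_2$, again from \eqref{eq:th13} with $j = l$ and $\bvr = \bz_{l\rightarrow}$. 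When $\bZ = \bX^{\ep,r}$ I use the crude bound $|\mu(\bg_{n(l)},\bg_l)| \le 1$; when $\bZ = \bE^{\ep,r}$, the row index $l$ lies outside $\cB_r(\cS)$, so $\cd(l,q) \ge r$ for every $q \in \cS$, whence $|\mu(\bg_{n(l)},\bg_l)| = 1 - \cd(l,n(l)) \le 1 - r$. Summing over $l$ produces $\|\bX^{\ep,r}\|_{1,2}$, resp. $\|\bE^{\ep,r}\|_{1,2}$, as the common factor and gives \eqref{eq:Pf4}, \eqref{eq:Pf5}.

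The work is routine once the right expansion is in hand, so the only genuinely delicate point — and what I would flag as the main obstacle — is to see how to exploit the two free parameters in the definition of $\cI_{N_v}$: the maximum is over \emph{all} indices $j \in \{1, \dots, N_\by\}$, not merely $j \in \cS$, and the supremum is over \emph{all} $\bvr \in \CC^{1\times N_v}$. In \eqref{eq:Pf3} these get evaluated at $j = l \in \cS$ with $\bvr = \bx_{l\rightarrow}$, whereas in \eqref{eq:Pf4}--\eqref{eq:Pf5} they must be evaluated at a row index $l$ that is in general \emph{not} in $\cS$ and at $\bvr$ equal to the $l$‑th row of the reconstruction; recognizing that $\cI_{N_v}$ is by design the right quantity to absorb both situations is the crux. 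The complex‑valuedness of $\mu(\bg_q,\bg_l)$ enters only mildly, through the observation that the diagonal inner products $\lin\bx_{q\rightarrow},\wh\bx_{q\rightarrow}\rin$ are real and positive, so that they contribute their full size $\|\bX\|_{1,2}$ to $|\cL(\bG\bX)|$.
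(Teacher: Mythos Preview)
Your proposal is correct and follows essentially the same approach as the paper's proof: the same trace identity $\cL(\bV)=\sum_{q\in\cS}\lin(\bG^\star\bV)_{q\rightarrow},\wh\bx_{q\rightarrow}\rin$, the same diagonal/off-diagonal split, and the same use of the definition of $\cI_{N_v}$ at $j=l$, $\bvr$ equal to the relevant row. The only cosmetic difference is that for \eqref{eq:Pf4} the paper organizes the row sum via the partition $\cS^\ep=\bigcup_{i\in\cS}\cS^\ep_i$ of \eqref{eq:th24} so that $n(j)=i$ for $j\in\cS^\ep_i$, whereas you work directly with $n(l)$ for each row $l$; the two are equivalent.
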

\begin{proof} We start with  definition \eqref{eq:Pf2}, and use
the invariance of the trace under cyclic permutations, and the row
support $\cS$ of $\bX$, to obtain 
\begin{align*}
  \mathfrak{L}(\bG \bX) &= \mbox{tr}\Big[ (\bG \wh{\bX})^\star \bG \bX
    \Big] =  \mbox{tr}\Big[ \bX \wh \bX^\star \bG^\star \bG \Big] =
  \sum_{j,q \in \cS} (\bX \wh \bX^\star)_{j,q}  (\bG^\star
  \bG)_{q,j} = \sum_{j,q \in \cS} \lin \bx_{j \rightarrow},
  \wh{\bx}_{q \rightarrow} \rin \lin \bg_q, \bg_j \rin.
\end{align*}
We rewrite this further with the normalization condition
\eqref{eq:normg} and definition \eqref{eq:th13}, and use the
triangle inequality to obtain the bound 
\begin{align*}
  \big|\mathfrak{L}(\bG \bX)\big| &= \Big| \sum_{q \in \cS} \Big[\lin
    \bx_{q \rightarrow}, \wh{\bx}_{q \rightarrow} \rin \lin \bg_q,
    \bg_q \rin +\sum_{j \in \cS\setminus \{q\}}\lin \bx_{j
      \rightarrow}, \wh{\bx}_{q \rightarrow} \rin \lin \bg_q, \bg_j
    \rin \Big]\Big| \\ &= \Big| \sum_{q \in \cS}
  \|\bx_{q\rightarrow}\|_2 \Big[ 1 + \sum_{j \in \cS \setminus \{q\}
    }\mu(\bx_{j \rightarrow}, {\bx}_{q \rightarrow}) \mu(\bg_q,\bg_j)
    \Big] \Big| \\ & \ge \sum_{q \in \cS} \|\bx_{q\rightarrow}\|_2
  \Big[1 - \sum_{j \in \cS \setminus \{q\}}|\mu(\bx_{j \rightarrow},
    {\bx}_{q \rightarrow})| |\mu(\bg_q,\bg_j)| \Big] \Big| \\ &\ge
  \sum_{q \in \cS} \|\bx_{q\rightarrow}\|_2 (1- \cI_{N_v}).
\end{align*}
The result \eqref{eq:Pf3} follows from definition
of the matrix norm $\|\bX\|_{1,2}$.

Similarly, 
\begin{align*}
  \mathfrak{L}(\bG \bX^{\ep,r}) = \sum_{j \in \cS^\ep} \sum_{q \in \cS}
  \lin \bx_{j\rightarrow}^{\ep,r}, \wh \bx_{q
    \rightarrow} \rin \lin \bg_q,\bg_j \rin = \sum_{j \in \cS^\ep}
  \|\bx_{j\rightarrow}^{\ep,r}\|_2\sum_{q \in \cS} 
  \mu(\bx_{j\rightarrow}^{\ep,r}, \wh \bx_{q
    \rightarrow}) \mu(\bg_q,\bg_j),
\end{align*}
where $\bx_{j\rightarrow}^{\ep,r}$ denotes the
$j$--th row of $\bX^{\ep,r}$. Using the decomposition \eqref{eq:th24} of
the row support $\cS^\ep$ of $\bX^{\ep,r}$, 
\begin{align*}
  \big|\mathfrak{L}(\bG \bX^{\ep,r})\big| &= \Big|\sum_{i \in \cS}
  \sum_{j \in \cS^\ep_i}
  \|\bx_{j\rightarrow}^{\ep,r}\|_2 \sum_{q \in \cS}
  \mu(\bx_{j\rightarrow}^{\ep,r}, \wh \bx_{q
    \rightarrow}) \mu(\bg_q,\bg_j)\Big| \\ &= \Big|\sum_{i \in \cS}
  \sum_{j \in \cS^\ep_i}
  \|\bx_{j\rightarrow}^{\ep,r}\|_2
  \Big[\mu(\bx_{j\rightarrow}^{\ep,r}, \wh \bx_{i
      \rightarrow}) \mu(\bg_i,\bg_j) + \sum_{q \in \cS \setminus\{i\}}
    \mu(\bx_{j\rightarrow}^{\ep,r}, \wh \bx_{q
    \rightarrow}) \mu(\bg_q,\bg_j)\Big]\Big|.
\end{align*}
By the construction in \eqref{eq:th24}, for any $j \in
\cS^\ep_i$, the index $n(j)\in \cS$ of the nearest point to
$\by_j$ is $n(j) = i$, so the sum in $q$ is over the set $\cS
\setminus\{n(j)\}$.  Using the triangle inequality and the definition
\eqref{eq:th13} of $\cI_{N_v}$, we get
\begin{align*}
   \big|\mathfrak{L}(\bG \bX^{\ep,r})\big| &\le \sum_{i \in \cS}
   \sum_{j \in \cS^\ep_i}
   \|\bx_{j\rightarrow}^{\ep,r}\|_2
   \Big[|\mu(\bx_{j\rightarrow}^{\ep,r}, \wh \bx_{i
       \rightarrow}) \mu(\bg_i,\bg_j)| + \sum_{q \in \cS
       \setminus\{n(j)\}}
     |\mu(\bx_{j\rightarrow}^{\ep,r}, \wh \bx_{q
       \rightarrow})| \mu(\bg_q,\bg_j)|\Big] \\
   & \le \sum_{i \in \cS}
   \sum_{j \in \cS^\ep_i}
   \|\bx_{j\rightarrow}^{\ep,r}\|_2
   (1 + \cI_{N_v}) = \sum_{j \in \cS^\ep} \|\bx_{j\rightarrow}^{\ep,r}\|_2
   (1 + \cI_{N_v}).
\end{align*}
Since $\cS^\ep$ is the row support of $\bX^{\ep,r}$, we can extend the sum
to $1 \le j \le N_{\by}$, 
and the result \eqref{eq:Pf4} follows from the definition of the
$\|\cdot \|_{1,2}$ norm.

To prove \eqref{eq:Pf5}, recall that $\bE^{\ep,r}$ is supported by definition
 in the set 
$
\cB_r^c(\cS) = \{1, \ldots, N_{\by}\} \setminus \cB_r(\cS).
$
Then, if we denote by $ \be_{j\rightarrow}^{\ep,r}$ the rows
of $\bE^{\ep,r}$, we have 
\begin{align*}
  \mathfrak{L}(\bG \bE^{\ep,r}) &= \sum_{j \in \cB_r^c(\cS)} \sum_{q \in
    \cS} \lin \be_{j\rightarrow}^{\ep,r}, \wh \bx_{q
    \rightarrow} \rin \lin \bg_q,\bg_j \rin \\ &= \sum_{j \in
    \cB_r^c(\cS)}
  \|\be_{j\rightarrow}^{\ep,r}\|_2\sum_{q \in \cS}
  \mu(\be_{j\rightarrow}^{\ep,r}, \wh \bx_{q
    \rightarrow}) \mu(\bg_q,\bg_j) \\ &= \sum_{j \in \cB_r^c(\cS)}
  \|\be_{j\rightarrow}^{\ep,r}\|_2 \Big[
    \mu(\be_{j\rightarrow}^{\ep,r}, \wh \bx_{n(j)
      \rightarrow}) \mu(\bg_{n(j)},\bg_j) + \sum_{q \in
      \cS\setminus\{n(j)\}}
    \mu(\be_{j\rightarrow}^{\ep,r}, \wh \bx_{q
      \rightarrow}) \mu(\bg_q,\bg_j)\Big].
\end{align*}
Taking the absolute value and using the triangle inequality and 
definition \eqref{eq:th13} of $\cI_{N_v}$, we obtain the bound
\begin{align*}
  \big|\mathfrak{L}(\bG \bE^{\ep,r})\big| \le \sum_{j \in \cB_r^c(\cS)}
  \|\be_{j\rightarrow}^{\ep,r}\|_2 \Big[
    |\mu(\be_{j\rightarrow}^{\ep,r}, \wh \bx_{n(j)
      \rightarrow})| |\mu(\bg_{n(j)},\bg_j)| + \cI_{N_v}\Big].
\end{align*}
But $|\mu(\be_{j\rightarrow}^{\ep,r}, \wh \bx_{n(j)
  \rightarrow})| \le 1$ and $|\mu(\bg_{n(j)},\bg_j)| = 1 -
\cd(j,n(j))$, with $\cd(j,n(j)) \ge r$ for any $j \in
\cB_r^{c}(\cS)$, so the bound becomes
\begin{align*}
  \big|\mathfrak{L}(\bG \bE^{\ep,r})\big| \le \sum_{j \in \cB_r^c(\cS)}
  \|\be_{j\rightarrow}^{\ep,r}\|_2 (1 - r + \cI_{N_v}).
\end{align*}
We can extend the sum to $1 \le j \le N_{\by}$ because $\bE^{\ep,r}$ is
supported in $\cB_r^{c}(\cS)$, and the result \eqref{eq:Pf5} follows
from the definition of the $\|\cdot \|_{1,2}$ norm.

Finally, for any $\bV \in \CC^{N_{\br} \times N_v}$, we obtain using the
invariance of the trace to cyclic permutations that  
\begin{align*}
\mathfrak{L}(\bV) &= \mbox{tr}\Big[(\bG \wh \bX)^\star \bV \Big]
=\mbox{tr}\Big[ \bG^\star \bV \wh \bX^\star\Big] 
= \sum_{j = 1}^{N_{\by}} \lin \big(\bG^\star \bV)_{j \rightarrow}, \wh
\bx_{j \rightarrow} \rin = \sum_{j \in \cS}  \lin \big(\bG^\star \bV)_{j \rightarrow}, \wh
\bx_{j \rightarrow} \rin,
\end{align*}
where the last equality is because $\bX$ is row supported in $\cS$.
Taking the absolute value and using the triangle and Cauchy-Schwartz
inequalities we get
\begin{align*}
  \big|\mathfrak{L}(\bV)\big| &\le \sum_{j \in \cS} \big|\lin
  \big(\bG^\star \bV)_{j \rightarrow}, \wh \bx_{j \rightarrow}
  \rin\big| \le \sum_{j \in \cS} \|\big(\bG^\star \bV)_{j \rightarrow}\|_2 = \|\big(\bG^\star \bV)_{\cS \rightarrow}\|_{1,2}.
\end{align*}
This is the result \eqref{eq:Pf6} in the lemma.
\end{proof}

\subsection{Proof of Theorem \ref{thm:1}}
\label{sect:proof1}
The bound \eqref{eq:th19p} follows from  the definition
of $\bW^\ep$ and the triangle inequality,
\begin{align*}
  \|\bW^\ep\|_F &= \|\bD_{\bW} - \bG \bX^\ep - \bW \|_{F} 
  \le \|\bD_{\bW} - \bG \bX^\ep\|_{F} + \|\bW \|_{F} 
  \le 2\ep,
\end{align*}
where we used the assumption \eqref{eq:th12} and that $\bX^\ep$ is the
minimizer of \eqref{eq:th11}.

Using again the definition of $\bW^\ep$ and the linearity of the operator
\eqref{eq:Pf2}, we write
\begin{align*}
  \cL(\bG \bX) + \cL(\bW^\ep) = \cL(\bG \bX + \bW^\ep) = \cL(\bG \bX^\ep) =
  \cL(\bG \bX^{\ep,r}) + \cL(\bG \bE^{\ep,r}),
\end{align*}
where the last equality is by the decomposition \eqref{eq:th18}. The result
\eqref{eq:Pf3} in Lemma \ref{lem:1} gives 
\begin{align*}
  \|\bX\|_{1,2} ( 1 - \cI_{N_v}) \le \big| \cL(\bG \bX)|  =
  \big| \cL(\bG \bX^{\ep,r}) + \cL(\bG \bE^{\ep,r}) - \cL(\bW^\ep)\big|,
\end{align*}
and using the triangle inequality and the estimates \eqref{eq:Pf6},
\eqref{eq:Pf4} and \eqref{eq:Pf5}, we get
\begin{align}
  \|\bX\|_{1,2} ( 1 - \cI_{N_v}) &\le \big| \cL(\bG \bX^{\ep,r})\big| +
  \big|\cL(\bG \bE^{\ep,r})| + \big|\cL(\bW^\ep)\big| \nonumber \\ & \le (1
  + \cI_{N_v}) \|\bX^{\ep,r}\|_{1,2} + (1-r+\cI_{N_v}) \|\bE^{\ep,r}\|_{1,2}
  + \|(\bG^\star \bW^\ep)_{\cS \rightarrow}\|_{1,2}. \label{eq:Pf7}
\end{align}
Note that by \eqref{eq:th10} and \eqref{eq:th12},
\[
\|\bG \bX - \bD_{\bW}\|_{F} = \|\bW\|_F < \ep,
\]
so since $\bX^\ep$ is the minimizer of \eqref{eq:th11}, we must have
$
  \|\bX^\ep\|_{1,2} \le \|\bX\|_{1,2}.$ We also obtain from the decomposition \eqref{eq:th18} of $\bX^\ep$ in
the matrices $\bX^{\ep,r}$ and $\bE^{\ep,r}$ with disjoint row support that
\[
  \|\bX^\ep\|_{1,2} = \|\bX^{\ep,r} + \bE^{\ep,r}\|_{1,2} =
  \|\bX^{\ep,r}\|_{1,2} + \|\bE^{\ep,r}\|_{1,2}.
 \]
Substituting in \eqref{eq:Pf7} we get
\begin{align*}
  \|\bX^\ep\|_{1,2} ( 1 - \cI_{N_v}) &\le (1 + \cI_{N_v})
  (\|\bX^\ep\|_{1,2} - \|\bE^{\ep,r}\|_{1,2}) + (1-r+\cI_{N_v})
  \|\bE^{\ep,r}\|_{1,2} + \|(\bG^\star \bW^\ep)_{\cS \rightarrow}\|_{1,2}
  \\ &= (1 + \cI_{N_v}) \|\bX^\ep\|_{1,2} - r \|\bE^{\ep,r}\|_{1,2} +
  \|(\bG^\star \bW^\ep)_{\cS \rightarrow}\|_{1,2} .
\end{align*}
We also have from  the definition of $\|\cdot \|_{1,2}$, the normalization 
of the columns of $\bG$ and  \eqref{eq:th19p}, that 
\begin{align}
\big\| \big(\bG^\star \bW^\ep\big)_{\cS \rightarrow}
    \big\|_{1,2} &= \sum_{j \in \cS} \|\bg_j^\star \bW^\ep\|_2 = 
\sum_{j \in \cS} \left[ \sum_{v=1}^{N_v} |\bg_j^\star {\bf w}_v^\ep|^2 \right]^{1/2} 
\hspace{-0.1in} \le \sum_{j \in \cS} \left[ \sum_{v=1}^{N_v}
  \|{\bf w}_v^\ep\|_2^2\right]^{1/2}\hspace{-0.1in} = |\cS| 
\|\bW^\ep\|_F \le 2 \ep |\cS|,
\label{eq:pessim}
\end{align}
where ${\bf w}_v^\ep$ are the columns of $\bW^\ep$.
The result \eqref{eq:th19} stated in the theorem follows. $\Box$

\subsection{Proof of Theorem \ref{thm:2}}
\label{sect:proof2}
Let us start with the definition of the matrices $\bW^\ep$, $\bX^{\ep,r}$
and $\bE^{\ep,r}$ given in Theorem \ref{thm:1}, and write
\begin{align*}
  \bG \bX^\ep = \bG (\bX^{\ep,r} + \bE^{\ep,r}) = \bG \bX + \bW^\ep.
\end{align*}
With the decomposition \eqref{eq:th21} of $\bX^{\ep,r}$, we get
\begin{align}
  \bG (\bX - \bxi^{\ep,r})  = \bG \bE^{\ep,r} + \bG \bEr^{\ep,r} - \bW^\ep,
  \label{eq:Pf10}
\end{align}
and we prove next the analogue of the result \eqref{eq:Pf7} for $\bX$
replaced by the matrix $\bX - \bxi^{\ep,r}$ and $\bX^{\ep,r}$ replaced by
$0$. Looking at the proof of \eqref{eq:Pf3} in section
\ref{sect:proof0}, we note that we only used that $\bX$ has row
support in $\cS$. The same holds for the matrix $\bX - \bxi^{\ep,r}$, so
we can write directly the analogue of \eqref{eq:Pf3}
\begin{align}
  \|\bX - \bxi^{\ep,r}\|_{1,2} (1 - \cI_{N_v}) \le \Big|\cL \Big(\bG (\bX -
  \bxi^{\ep,r})\Big)\Big|.
  \label{eq:Pf11}
\end{align}
The right hand side in this equation can be estimated using
\eqref{eq:Pf10} and the linearity of the operator $\cL$, 
\begin{align*}
  \Big|\cL\Big(\bG (\bX - \bxi^{\ep,r})\Big)\Big| = \Big|\cL(\bG
  \bE^{\ep,r}) + \cL ( \bG \bEr^{\ep,r} - \bW^\ep) \Big| 
  \le \Big|\cL(\bG
  \bE^{\ep,r}) \Big| + \Big|\cL ( \bG \bEr^{\ep,r} - \bW^\ep) \Big|.
\end{align*}
Substituting in \eqref{eq:Pf11} and using the estimates \eqref{eq:Pf5}
and \eqref{eq:Pf6}, with $\bV$ replaced by $\bG \bEr^{\ep,r} - \bW^\ep$,
we obtain
\begin{align*}
  \|\bX - \bxi^{\ep,r}\|_{1,2} (1 - \cI_{N_v}) \le (1-r + \cI_{N_v})
  \|\bE^{\ep,r}\|_{1,2} + \Big\| \Big(\bG^\star(\bG
  \bEr^{\ep,r} - \bW^\ep)\Big)_{\cS \rightarrow} \Big \|_{1,2}.
\end{align*}
But, by equation \eqref{eq:th23},
\[
\Big(\bG^\star\bG \bEr^{\ep,r})_{\cS \rightarrow} = \bG^\star_{\cS} \bG \bEr^{\ep,r} = 0,
\]
and the desired estimate is 
\begin{align}
  \|\bX - \bxi^{\ep,r}\|_{1,2} (1 - \cI_{N_v}) \le (1-r + \cI_{N_v})
  \|\bE^{\ep,r}\|_{1,2} + \Big\| \Big(\bG^\star \bW^\ep)\Big)_{\cS
    \rightarrow} \Big \|_{1,2},
  \label{eq:Pf12}
\end{align}
with the last term bounded as in \eqref{eq:pessim}.

Next, we substitute the bound \eqref{eq:th19} on the error term
$\bE^{\ep,r}$ in \eqref{eq:Pf12}, and obtain after simple algebraic
manipulations that
\begin{align}
  \|\bX - \bxi^{\ep,r}\|_{1,2} \le \frac{2 \cI_{N_v} (1-r + \cI_{N_v})}{r
    (1-\cI_{N_v})} \|\bX^\ep\|_{1,2} + \frac{(1+\cI_{N_v})}{r(1-\cI_{N_v})}
  \Big\| \Big(\bG^\star \bW^\ep)\Big)_{\cS
    \rightarrow} \Big \|_{1,2}.
  \label{eq:Pf13}
\end{align}
The assumption  $2 \cI_{N_v} < r < 1$ implies that
\[
1-r + \cI_{N_v} \le 1 - 2 \cI_{N_v} + \cI_{N_v} = 1 - \cI_{N_v} \quad \mbox{and} \quad \frac{1+\cI_{N_v}}{1-\cI_{N_v}} < \frac{1 + \cI_{N_v}}{1-r/2} < 2(1 +
\cI_{N_v}) < 3.
\]
Substituting in \eqref{eq:Pf13} we obtain the result \eqref{eq:th27}
of Theorem \ref{thm:2}.

It remains to prove the estimate \eqref{eq:th26}. We begin with the
identity
\[
\bxi^{\ep,r} - \overline{\bX^{\ep,r}} = \bX^{\ep,r} - \overline{\bX^{\ep,r}} -
\bEr^{\ep,r},
\]
and use equation \eqref{eq:th23} to conclude that
\begin{align*}
  \bG_\cS^\star \bG (\bxi^{\ep,r} - \overline{\bX^{\ep,r}}) =
  \bG_\cS^\star\bG (\bX^{\ep,r} - \overline{\bX^{\ep,r}}).
\end{align*}
By construction, both $\bxi^{\ep,r}$ and $\overline{\bX^{\ep,r}}$ are row
supported in $\cS$, so we can rewrite this equation as
\begin{align}
  \label{eq:Pf14}
  (\bxi^{\ep,r} -\overline{\bX^{\ep,r}})_{\cS \rightarrow} - ({\itbf
    I} - \bG_\cS^\star \bG_\cS) (\bxi^{\ep,r} -
  \overline{\bX^{\ep,r}})_{\cS \rightarrow} = \bG_\cS^\star\bG (\bX^{\ep,r}
  - \overline{\bX^{\ep,r}}),
\end{align}
where ${\itbf I}$ is the $|\cS| \times |\cS|$ identity matrix.
We now estimate  each term in this equation. 

For the right hand side in \eqref{eq:Pf14} we have
\begin{align}
  \|\bG_\cS^\star \bG (\bX^{\ep,r} - &\overline{\bX^{\ep,r}})\|_{1,1} =
  \sum_{q \in \cS} \sum_{v=1}^{N_v} \Big|\sum_{j = 1}^{N_{\by}}
  (\bG_\cS^\star \bG)_{q,j} (\bX^{\ep,r} - \overline{\bX^{\ep,r}})_{j,v}
  \Big| \nonumber\\&= \sum_{q \in \cS} \sum_{v=1}^{N_v} \Big|\sum_{j
    \in \cS \cup \cS^\ep} \mu(\bg_q,\bg_j) (\bX^{\ep,r} -
  \overline{\bX^{\ep,r}})_{j,v} \Big|\nonumber \\&= \sum_{q \in \cS}
  \sum_{v=1}^{N_v} \Big|\sum_{j \in \cS \cup \cS^\ep\setminus
    \cS^\ep_q} \hspace{-0.1in}\mu(\bg_q,\bg_j) (\bX^{\ep,r} -
  \overline{\bX^{\ep,r}})_{j,v} + \hspace{-0.15in} \sum_{j \in (\cS \cup \cS^\ep) \cap
   \cS^\ep_q}\hspace{-0.15in} \mu(\bg_q,\bg_j)
  (\bX^{\ep,r} - \overline{\bX^{\ep,r}})_{j,v}\Big|, \label{eq:Pf15}
\end{align}
where the first two equalities are by the definition of the norm and
of the matrix product, and the third equality uses the definition
\eqref{eq:th14} and the row support $\cS \cup \cS^\ep$ of $\bX^{\ep,r} -
\overline{\bX^{\ep,r}}$.  Now let us recall the definition
\eqref{eq:th25} of $\overline{\bX^{\ep,r}}$, and the
decomposition \eqref{eq:th24} of the support $\cS^\ep$ of $\bX^{\ep,r}$,
to obtain 
\[
\sum_{j \in (\cS \cup \cS^\ep) \cap
   \cS^\ep_q}\hspace{-0.1in} \mu(\bg_q,\bg_j)
  \bX^{\ep,r}_{j,v}  = \sum_{j \in 
   \cS^\ep_q}\hspace{-0.05in} \mu(\bg_q,\bg_j)
  \bX^{\ep,r}_{j,v} = \overline{\bX^{\ep,r}_{q,v}}
\quad \mbox{and} \quad 
\overline{\bX^{\ep,r}_{j,v}} = \overline{\bX^{\ep,r}_{q,v}} \, \delta_{j,q}, \quad
\forall \, j \in \cS^\ep_q,
\]
where $\delta_{j,q}$ is the Kronecker delta symbol. Since
$\mu(\bg_q,\bg_q) = 1$, we conclude that the second term in
\eqref{eq:Pf15} vanishes and the result becomes
\begin{align}
  \|\bG_\cS^\star \bG (\bX^{\ep,r} - \overline{\bX^{\ep,r}})\|_{1,1} &=
  \sum_{q \in \cS} \sum_{v=1}^{N_v} \Big|\sum_{j \in \cS \cup
    \cS^\ep\setminus \cS^\ep_q} \hspace{-0.1in}\mu(\bg_q,\bg_j)
  (\bX^{\ep,r} - \overline{\bX^{\ep,r}})_{j,v}\Big| \nonumber \\ &\le
  \sum_{v=1}^{N_v} \sum_{q\in \cS} \sum_{j \in \cS \cup
    \cS^\ep\setminus \cS^\ep_q} |\mu(\bg_q,\bg_j)| \Big|(\bX^{\ep,r}
  - \overline{\bX^{\ep,r}})_{j,v}\Big|.
  \label{eq:Pf16}
\end{align}
Note that the set $\{(j,q): j \in \cS \cup \cS^\ep\setminus
\cS^\ep_q, ~q \in \cS \}$ is the same as the set
$\{(j,q): j \in \cS \cup \cS^\ep, ~ q \in \cS \setminus \{n(j)\} \}$,
so we can rewrite \eqref{eq:Pf16} as
\begin{equation*}
  \|\bG_\cS^\star \bG (\bX^{\ep,r} - \overline{\bX^{\ep,r}})\|_{1,1} \le
  \sum_{v=1}^{N_v} \sum_{j \in \cS \cup \cS^\ep} \Big|(\bX^{\ep,r} -
  \overline{\bX^{\ep,r}})_{j,v}\Big| \sum_{q \in \cS \setminus
    \{n(j)\}}|\mu(\bg_q,\bg_j)|.
\end{equation*}
The last sum in this equation is bounded above by the interaction
coefficient $\cI_1$, and using the definition of the $\|\cdot
\|_{1,1}$ norm we get
\begin{equation}
  \label{eq:Pf17}
  \|\bG_\cS^\star \bG (\bX^{\ep,r} - \overline{\bX^{\ep,r}})\|_{1,1} \le
  \cI_1 \|\bX^{\ep,r} - \overline{\bX^{\ep,r}}\|_{1,1}.
\end{equation}

With a similar calculation we obtain
\begin{align*}
\Big\| ({\itbf I} - \bG_\cS^\star \bG_\cS) (\bxi^{\ep,r} -
  \overline{\bX^{\ep,r}}) \Big\|_{1,1} &= \sum_{q\in \cS} \sum_{v=1}^{N_v}
  \Big| \sum_{j \in \cS}(\bG_\cS^\star \bG_\cS-{\itbf I})_{q,j} (\bxi^{\ep,r} -
  \overline{\bX^{\ep,r}})_{j,v} \Big| \\
  & \le 
  \sum_{j \in \cS} \sum_{v=1}^{N_v} |(\bxi^{\ep,r} -
  \overline{\bX^{\ep,r}})_{j,v}| \sum_{q\in \cS} |\mu(\bg_q,\bg_j)-\delta_{q,j}|
  \\&= \sum_{j \in \cS} \sum_{v=1}^{N_v} |(\bxi^{\ep,r} -
  \overline{\bX^{\ep,r}})_{j,v}| \sum_{q\in \cS \setminus \{j\}} |\mu(\bg_q,\bg_j)|,
\end{align*}
where we used the triangle inequality, the identity $(\bG_\cS^\star
\bG_\cS)_{q,j} = \mu(\bg_q,\bg_j)$ and $\mu(\bg_q,\bg_q) = 1$. The
last sum is bounded above by the interaction coefficient $\cI_1$, and
using that $\bxi^{\ep,r} - \overline{\bX^{\ep,r}}$ is row supported in $\cS$,
and the definition of the $\|\cdot \|_{1,1}$ norm, we get
\begin{equation}
  \Big\| ({\itbf I} - \bG_\cS^\star \bG_\cS) (\bxi^{\ep,r} -
  \overline{\bX^{\ep,r}}) \Big\|_{1,1}  \le \cI_1 \|\bxi^{\ep,r} -
  \overline{\bX^{\ep,r}} \Big\|_{1,1}.
  \label{eq:Pf18}
\end{equation}

Gathering the results \eqref{eq:Pf14}, \eqref{eq:Pf17}--\eqref{eq:Pf18},
and using the triangle inequality, we obtain the bound
\begin{equation}
  (1- \cI_1)\|\bxi^{\ep,r} -\overline{\bX^{\ep,r}}\|_{1,1} \le \cI_1
  \|\bX^{\ep,r} - \overline{\bX^{\ep,r}}\|_{1,1} \le \cI_1
  \Big(\|\bX^{\ep,r}\|_{1,1} + \|\overline{\bX^{\ep,r}}\|_{1,1}
  \Big).\label{eq:Pf19}
\end{equation}
We also have from the definition \eqref{eq:th25} and the inequality
$|\mu(\bg_j,\bg_l)| \le 1$ for all $j, l = 1, \ldots, N_{\by}$, that
\[
\|\overline{\bX^{\ep,r}}\|_{1,1} \le  \|\bX^{\ep,r}\|_{1,1}.
\]
The estimate \eqref{eq:th26} in Theorem \ref{thm:2} follows by substituting
this in \eqref{eq:Pf19}. $\Box$

\subsection{Proof of Proposition \ref{prop:1}}
\label{sect:proof3}
Recall from section \ref{sect:proof0} the definition of the unit row vectors
$\wh{\bx}_{q \rightarrow}$. Because the rows of $\bX$ are assumed
orthogonal in the proposition, $\{\wh \bx_{q\rightarrow}, q \in \cS
\}$ is an orthonormal subset of $\CC^{1 \times N_v}$, and we conclude
from Bessel's inequality that
\begin{equation*}
  \sum_{q \in \cS \setminus\{n(j) \}} |\lin \bvr, \wh \bx_{q \rightarrow}\rin |^2
  \le \|\bvr\|_2^2, \quad \forall ~ \bvr \in \CC^{1 \times N_v} ~
\mbox{and} ~j = 1, \ldots, N_{\by}.
\end{equation*}
Dividing both sides in this equation by $\|\bvr\|_2^2$ and recalling 
definition \eqref{eq:th15}, we obtain
\begin{equation}
  \sum_{q \in \cS \setminus\{n(j) \}} |\mu(\bvr, \wh \bx_{q \rightarrow})|^2
  \le 1, \quad \forall ~ \bvr \in \CC^{1 \times N_v} ~
\mbox{and} ~j = 1, \ldots, N_{\by}.
    \label{eq:O2}
\end{equation}
For a given $j$ and $v$, we define the vector $\bnu^{(j, \bvr)} \in \RR^{1
  \times (|\cS|-1)}$ with entries $|\mu(\bvr, \wh \bx_{q
  \rightarrow})|$. Recall also from section \ref{sect:orthogX} the
vector $\bga^{(j)} \in \RR^{1 \times (|\cS|-1)}$ with entries
$|\mu(\bg_j, \bg_q)|$, for $q \in \cS \setminus\{n(j) \}$, which is a
set with cardinality $|\cS|-1$.  Using these vectors, we have
\begin{equation*}
  \sup_{\bvr \in \CC^{1 \times N_v}} \sum_{q \in \cS \setminus
    \{n(j)\}} |\mu(\bg_j,\bg_q)| |\mu(\bvr, \wh \bx_{q \rightarrow})|
  = \sup_{\bnu^{(j,\bvr)} \in \RR^{1 \times |\cS|-1}, \|\bnu^{(j,\bvr)}\|
    \le 1} \left( \bnu^{(j,\bvr)}, \bga^{(j)} \right) = \|\bga^{(j)}
  \|_2,
\end{equation*}
where $( \cdot, \cdot )$ is the Euclidian inner product in
$\RR^{1 \times |\cS|-1}$ and we used inequality \eqref{eq:O2} to
conclude that $\bnu^{(j)}$ lies in the unit ball in $\RR^{1 \times
  |\cS|-1}$. The last equality is because the sup is achieved for
$
\bnu^{(j,\bvr)} = {\bga^{(j)}/}{\|\bga^{(j)} \|_2}. $ Substituting in
the definition \eqref{eq:th13}, we obtain the result
\eqref{eq:Prop1}. $\Box$

\subsection{Proof of cluster results}
\label{sect:proof4}
The proof of Theorem \ref{thm:3} is the same as in section
\ref{sect:proof1}, with $\bX$ replaced by $\bU$, $\cS$ replaced by
$\cC$ and $\bW$ replaced by $\bcW$. This leads to the estimate 
\[
    \|\bE^{\ep,r}\|_{1,2} \le \frac{2 \cI_{N_v}^{\bU}}{r}
    \|\bX^\ep\|_{1,2} + \frac{1}{r} \big\| \big(\bG^\star \bG(\bX^\ep
    - \bX + \bR)\big)_{\cC \rightarrow} \big\|_{1,2} = \frac{2 \cI_{N_v}^{\bU}}{r}
    \|\bX^\ep\|_{1,2} + \frac{1}{r} \big\| \big(\bG^\star \bW^\ep\big)_{\cC \rightarrow} \big\|_{1,2},
\]
where we used that $\bU = \bX - \bR$, the definition of $\bW^\ep$ in Theorem \ref{thm:1} and $\bG_\cC^\star \bG \bR = 0$. 
 $\Box$

It remains to prove Lemma \ref{lem:Cl1}. The projection \eqref{eq:Cl3} that defines $\bU$  induces the  linear operator
$\mathfrak{T}: \CC^{N_r \times N_v} \to \CC^{N_r \times N_v}$ that maps $
\bG \bU = \mathfrak{T} \bG \bX. $
Note that  $ \bG \bU = \mathfrak{T} \bG \bU$ and since 
\[
0 = \bG_\cC^\star \bG \bR = \bG_\cC^\star \bG (\bX - \bU) =
\bG_\cC^\star (\bG \bX - \mathfrak{T} \bG \bX ),
\]
$\mathfrak{T}$ is the orthogonal projection onto the range of
$\bG_\cC$.
To estimate
\begin{equation}
\|\bG \bR\|_F^2 = \|\bG(\bX - \bU)\|_F^2 = \sum_{v = 1}^{N_v} \|\bG(\bx_v-\bu_v)\|_2^2 
= \sum_{v = 1}^{N_v} \|\bG \bx_v- \mathfrak{T} \bG \bx_v\|_2^2, 
\label{eq:PCl5}
\end{equation}
we note that since $\mathfrak{T}$ is the orthogonal projection
on $\mbox{range}(\bG_\cC)$,
\begin{equation}
\|\bG \bx_v- \mathfrak{T} \bG \bx_v\|_2 \le \|\bG \bx_v - \bz)\|_2, \quad 
\forall \, \bz \in \mbox{ range} (\bG_\cC).
\label{eq:deleted}
\end{equation}

Now let us define the "effective cluster matrix" $\overline{\bX}$, with entries
\begin{equation}
  \overline{\bX}_{j,v} = \left\{ \begin{array}{ll} \displaystyle
    \sum_{l \in \mathscr{S}_j} \bX_{l,v}\, \mu(\bg_j,\bg_l), \quad & j
    \in \cC, \\ 0, & \mbox{otherwise}, \end{array} \right. \quad
  \mbox{for} ~ ~1 \le j \le N_{\by}, ~ 1 \le v \le N_v.
  \label{eq:Cl2}
\end{equation}
We use the inequality \eqref{eq:deleted} for $\bz = \bG \overline{\bX} = \bG_\cC
\overline{\bX}_{\cC\rightarrow}$, and obtain
\begin{align} 
\|\bG \bx_v- \mathfrak{T} \bG \bx_v\|_2 &\le \|\bG \bx_v - \bG
\overline{\bx_v})\|_2 = \Big\| \sum_{j\in \cS} X_{j,v}\bg_j - \sum_{j
  \in \cC}\overline{X_{j,v}} \bg_j\Big\|_2, \label{eq:PCl7}
\end{align}
because $\bX$ is row supported in $\cS$ and $\overline{\bX}$ is row
supported in $\cC$.  Next, using the decomposition \eqref{eq:Cl1} of
$\cS$ and the definition \eqref{eq:Cl2} of $\overline{\bX}$, we have 
\begin{align}
\Big\| \sum_{j\in \cS} X_{j,v}\bg_j - \sum_{j \in
  \cC}\overline{X_{j,v}} \bg_j\Big\|_2 &= \Big \| \sum_{j \in \cC}
\sum_{l \in \mathscr{S}_j} X_{l,v} \bg_l - \sum_{j\in \cC}
\Big[\sum_{l\in \mathscr{S}_j} X_{l,v} \mu(\bg_j,\bg_l) \Big] \bg_j 
\Big\|_2 \nonumber \\
&= \Big \| \sum_{j \in \cC}
\sum_{l \in \mathscr{S}_j} X_{l,v} \big[\bg_l - \mu(\bg_j,\bg_l) \bg_j\big] \Big\|_2.
\end{align}
We can bound this using the triangle inequality and 
\begin{align}
\big\|\bg_l - \mu(\bg_j,\bg_l) \bg_j\big\|_2^2 = \lin \bg_l -
\mu(\bg_j,\bg_l) \bg_j, \bg_l - \mu(\bg_j,\bg_l) \bg_j\rin = 1 -
|\mu(j,l)|^2 \le 2 \cd(j,l), \label{eq:PCl8}
\end{align}
where we used the definition of the semimetric $\cd$ and of $\mu$.
Since $\mathscr{S}_j$ is contained within a ball of radius $\rc$
centered at $j \in \cC$, we have $\cd(j,l) \le \rc$ in \eqref{eq:PCl8},
and gathering the results
\eqref{eq:PCl7}--\eqref{eq:PCl8}, we get
\begin{equation}
\|\bG \bx_v- \mathfrak{T} \bG \bx_v\|_2 \le \sqrt{2 \rc} \sum_{j \in
  \cC} \sum_{l \in \mathscr{S}_j} |X_{l,v}| = \sqrt{2 \rc} \|\bx_v\|_1.
\label{eq:Pcl9}
\end{equation}
Finally, substituting in \eqref{eq:PCl5},
\begin{align*}
\|\bG \bR\|_F^2 \le  2 \rc \sum_{v = 1}^{N_v}
\big(\|\bx_v\|_1)^2 = 2 \rc \|\bX^T\|_{2,1}^2. \quad \Box
\end{align*}

\section{Summary}
\label{sect:sum}
We presented a novel resolution theory for synthetic aperture radar (SAR) imaging 
using the multiple measurement vector (MMV) approach, also known as simultaneously 
sparse optimization. This seeks to find an unknown matrix $\bX$ with sparse row support,  
by inverting a linear system of equations using sparsity promoting convex optimization.  
In the SAR imaging application, $\bX$ models
the unknown reflectivity of a scattering scene. The rows of 
$\bX$ are indexed by the points in the imaging region, and the columns correspond to its values for multiple views
of the imaging scene, from different sub-apertures and polarization diverse measurements.

The resolution theory does not pursue the question of exact recovery, but seeks  to estimate 
the neighborhood of the support of $\bX$ where the largest entries in the reconstruction lie. The radius 
of this neighborhood represents the resolution limit and it depends on the noise level. 
We introduced a quantifier of how the unknowns influence each other 
in imaging, called the multiple view interaction coefficient, and showed that the smaller this is and the weaker the noise,  the better 
the estimate of the support of $\bX$. We also quantified the error of the reconstruction  and studied the advantage of having 
multiple views.  The existing literature shows that the MMV method does not always perform better than 
sparsity promoting optimization with a single view, the so-called single measurement vector (SMV) formulation. 
We showed that if the rows of $\bX$ are orthogonal, then the MMV approach is expected to perform better, 
depending on how the unknowns are distributed in the imaging scene. We quantified this advantage and 
explained how the condition of orthogonality of the rows of $\bX$ arises in the application of SAR imaging 
of direction dependent reflectivity. 

We also studied imaging of  well-separated clusters of scatterers and showed that the MMV approach 
gives a reconstruction  supported near these clusters.

\section*{Acknowledgments}
This material is based upon work supported by the Air Force Office of
Scientific Research under award number FA9550-15-1-0118.

\appendix
\section{Proof of Proposition \ref{prop:SARort}}
\label{ap:SARort}
Let us introduce the notation
\begin{equation}
\xi_{q,v} = 
\rho_q(\obr_v,\om) \sqrt{N_r} \, \varphi \Big[ b \Big(\ot + \frac{2
    \bmm_1 \cdot \Delta \by_q}{c} \Big)\Big].
\label{eq:SA26}
\end{equation}
Assuming that $\varphi$ is smooth and using that the spacing between 
the centers of consecutive sub-apertures is small,  we approximate the correlation of the rows of $\bX$ by 
\begin{align}
\big|\mu(\bx_{q \rightarrow}, \bx_{l \rightarrow})\big| \approx \frac{
  \left|\displaystyle \int_{-A/2}^{A/2} dr \, \psi_{q,l}(r)\exp \Big[ 2 i \ok
    \frac{(\br_o + r \btau - \oby)}{|\br_o + r \btau - \oby|}
    \cdot (\by_q -\by_l) \Big]\right|}{\|\psi_{q,q}\|^{1/2}_{L_1(-A/2,A/2)}
  \|\psi_{l,l}\|^{1/2}_{L_1(-A/2,A/2)}}.
\label{eq:SA28}
\end{align}
Recall that $\br_o$ is the center of the large linear aperture, along   $\btau$.  We parametrize 
this  aperture
by the arclength $r \in [-A/2,A/2]$,  and  $\psi_{q,l}(r)$ is the smooth kernel satisfying the interpolation conditions
\begin{equation}
\psi_{q,l}\left(r = \Big( \frac{v-1}{N_v-1} - \frac{1}{2} \Big) A
\right) = \xi_{q,v} \xi_{l,v}^\star.
\label{eq:SA29}
\end{equation}
To estimate \eqref{eq:SA28}  we  expand the exponent in $r$
\begin{equation}
\ok \frac{(\br_o + r \btau - \oby)}{|\br_o + r \btau - \oby|}
\cdot (\by_q -\by_l) = \ok \bmm_o \cdot (\by_q -\by_l) + \ok r
\frac{\btau \cdot \mathbb{P}_o (\by_q - \by_l)}{|\br_o - \oby|} +
\ldots,
\label{eq:SA30}
\end{equation}
with $ \bmm_o$ and $\mathbb{P}_o$ defined as in Proposition \ref{prop:SARort}.
Suppose that  $A$ and the cross-range offset between
$\by_q$ and $\by_l$ are small enough so we can neglect the higher
terms\footnote{The results are qualitatively the same if we include
  quadratic terms in $r$ and neglect cubic and higher order terms. The
  discussion is simpler if we consider only the shown terms in
  \eqref{eq:SA30}.} in \eqref{eq:SA30}. Then, using $Q$ defined in Proposition \ref{prop:SARort}
 and integrating by parts in \eqref{eq:SA28}, we obtain 
 \begin{align}
\hspace{-0.06in}|\mu(\bx_{q\rightarrow},\bx_{l\rightarrow})| \approx
\frac{ A\left| \psi_{q,l}(A/2) e^{i Q/2} - \psi_{q,l}(- A/2) e^{-i Q/2}
  - \int_{-A/2}^{A/2} dr \, \psi'_{q,l}(r) e^{i r Q/A} \right|}{|Q|
  \|\psi_{q,q}\|^{1/2}_{L_1(-A/2,A/2)}
  \|\psi_{l,l}\|^{1/2}_{L_1(-A/2,A/2)}}.
\label{eq:SA34}
\end{align}

If the reflectivities are independent of direction, \eqref{eq:SA34}
becomes
$
|\mu(\bx_{q\rightarrow},\bx_{l\rightarrow})| \approx |\mbox{sinc}(Q/2)|.
$
This attains its maximum at $Q = 0$ i.e., at $q = l$, and decays as
$1/|Q|$, as stated in the proposition.  It remains to show that the
result extends to reflectivities that vary smoothly with direction.
We obtain from \eqref{eq:SA34}, using the triangle inequality, that
\begin{align}
|\mu(\bx_{q\rightarrow},\bx_{l\rightarrow})| \le \frac{  A \big[
    |\psi_{q,l}(A/2)| + |\psi_{q,l}(- A/2)| + \|\psi'_{q,l}
    \|_{L_1(-A/2,A/2)} \big]}{|Q|
    |\|\psi_{q,q}\|^{1/2}_{L_1(-A/2,A/2)}
    |\|\psi_{l,l}\|^{1/2}_{L_1(-A/2,A/2)}},
\label{eq:SA35}
\end{align}
and we estimate next the three terms in the numerator. 
We begin with 
\[
|\psi_{q,l}(A/2)| \le |\psi_{q,l}(s)| + \Big|\int_{s}^{A/2} d r \, \psi'_{q,l}(r)\Big|,
\]
where we used the fundamental theorem of calculus and the triangle
inequality. Therefore,
\begin{align} 
A |\psi_{q,l}(A/2)| &= \int_{-A/2}^{A/2} ds \, |\psi_{q,l}(A/2)|
\le \int_{-A/2}^{A/2} ds \, |\psi_{q,l}(s)| +
\int_{-A/2}^{A/2} ds \Big| \int_{s}^{A/2} dr \, \psi_{q,l}'(r) \Big|
\nonumber \\ &\le \|\psi_{q,l}\|_{L_1(-A/2,A/2)} + \int_{-A/2}^{A/2}
ds \int_{-A/2}^{A/2} dr \big| \psi_{q,l}'(r) \big| \nonumber \\ &=
\|\psi_{q,l}\|_{L_1(-A/2,A/2)} + A \|\psi'_{q,l}\|_{L_1(-A/2,A/2)}.
\label{eq:SA37}
\end{align}
The first term in this equation can be bound using the Cauchy-Schwartz
inequality, once we recall the definition \eqref{eq:SA29} of
$\psi_{q,l}$. We rewrite this definition as
\begin{equation}
\label{eq:defpsi}
\psi_{q,l}(r) = \xi_q(\br_o + r \btau) \xi_l^\star(\br_o + r \btau),
\end{equation}
in an abuse of notation, so that 
$
\xi_{q,v} = \xi_q(\br_v),$ for  $\br_v = 
\br_o + \Big(\frac{v-1}{N_v-1} - \frac{1}{2}\Big)A \btau.
$
We obtain that 
\begin{align*}
L_2
\| \psi_{q,l}\|_{L_1(-A/2,A/2)} &= \int_{-A/2}^{A/2} dr \, |\xi_q(\br_o + r \btau)
\xi_l^\star(\br_o + r \btau)| \nonumber \\ &\le \left[ \int_{-A/2}^{A/2} dr \,
  |\xi_q(\br_o + r \btau)|^2\right]^{1/2} \left[ \int_{-A/2}^{A/2} dr \, |\xi_l
  (\br_o + r \btau)|^2\right]^{1/2} \nonumber \\ &= \|\psi_{q,q}\|^{1/2}_{L_1(-A/2,A/2)}
\|\psi_{l,l}\|^{1/2}_{L_1(-A/2,A/2)}.
\end{align*}
We also have from \eqref{eq:defpsi} that 
\[
\psi_{q,l}'(r) = \btau \cdot \nabla \xi_q(\br_o + r \btau)
\xi_l^\star(\br_o + r \btau) + \xi_q(\br_o + r \btau) \btau
\cdot \nabla \xi_l^\star(\br_o + r \btau),
\]
and from the Cauchy-Schwartz and triangle inequalities we get 
\begin{align*}
\|\psi_{q,l}'\|_{L_1(-A/2,A/2)} &\le \|\btau \cdot \nabla \xi_q \|_{L_2(-A/2,A/2)} 
\|\xi_l \|_{L_2(-A/2,A/2)} + \|\xi_q \|_{L_2(-A/2,A/2)} 
\|\btau \cdot \nabla \xi_l \|_{L_2(-A/2,A/2)} \nonumber \\
&\le \|\nabla \xi_q \|_{L_2(-A/2,A/2)} 
\|\xi_l \|_{L_2(-A/2,A/2)} + \|\xi_q \|_{L_2(-A/2,A/2)} 
\|\nabla \xi_l \|_{L_2(-A/2,A/2)}.
\end{align*}
To estimate this further, let us introduce the constant $K_{q}$, which depends 
on the scale of variation of the reflectivity $\xi_q$, such that 
\[
\|\nabla \xi_q \|_{L_2(-A/2,A/2)}  \le \frac{K_q}{A} \|\xi_q \|_{L_2(-A/2,A/2)}.
\]
Since $ \|\xi_q \|_{L_2(-A/2,A/2)} =
\|\psi_{q,q}\|^{1/2}_{L_1(-A/2,A/2)}$ by definition
\eqref{eq:defpsi}, we obtain that 
\[
A\|\psi_{q,l}'\|_{L_1(-A/2,A/2)} \le (K_q + K_l)
\|\psi_{q,q}\|^{1/2}_{L_1(-A/2,A/2)}
\|\psi_{l,l}\|^{1/2}_{L_1(-A/2,A/2)}.
\]
The estimate \eqref{eq:SA37} becomes
\[
A |\psi_{q,l}(A/2)|  \le (1 + K_q + K_l) \|\psi_{q,q}\|^{1/2}_{L_1(-A/2,A/2)}
\|\psi_{l,l}\|^{1/2}_{L_1(-A/2,A/2)}, 
\]
and a similar bound applies to $A|\psi_{q,l}(-A/2)|$. 
Gathering the results and
substituting in \eqref{eq:SA35}, we obtain the statement of the 
proposition, with $C_{q,l} = 12 \pi (1 + K_q + K_q)$.  $\Box$

\section{Expression of matrix $\boldsymbol{\Gamma}(\obr)$}
\label{ap:Gamma}
The $6 \times 6$ matrix $\boldsymbol{\Gamma}(\obr)$ that enters the data model \eqref{eq:Pol8}
can be written as 
\begin{equation*}
\boldsymbol{\Gamma}(\obr) = \boldsymbol{\Gamma}_{\mbox{diag}}(\obr) + \boldsymbol{\Gamma}_{\mbox{off-diag}}(\obr)
\end{equation*}
where 
\begin{align*}
\boldsymbol{\Gamma}_{\mbox{diag}}(\obr) = \mbox{diagonal} \Big( (1-\eta_1^2)^2 ,(1-\eta_2)^2,(1-\beta^2)^2, (1-\eta_1^2)(1-\eta_2^2)+ (\eta_1 \eta_2)^2, \\
(1-\eta_1^2)(1-\beta^2)+(\eta_1\beta)^2, (1-\eta_2^2)(1-\beta^2) + (\eta_2 \beta)^2 \Big),
\end{align*}
is the diagonal part of $\boldsymbol{\Gamma}(\obr)$ and 
\begin{equation*}
\boldsymbol{\Gamma}_{\mbox{off-diag}}(\obr) = \begin{mpmatrix} 0 & (\eta_1 \eta_2)^2 & (\eta_1
  \beta)^2 & \eta_1 \eta_2(\eta_1^2-1) & \eta_1 \beta(\eta_1^2-1) & \eta_1^2 \eta_2 \beta \\ 
  (\eta_1 \eta_2)^2 & 0 &(\eta_2 \beta)^2& \eta_1 \eta_2(\eta_2^2-1)& \eta_1\eta_2^2 \beta& \eta_2\beta(\eta_2^2-1)
  \\ (\eta_1 \beta)^2 &(\eta_2 \beta)^2 &0 &\eta_1 \eta_2 \beta^2 &\eta_1 \beta (\beta^2-1) & \eta_2 \beta (\beta^2-1)
  \\ 2 \eta_1 \eta_2(\eta_1^2-1) & 2 \eta_1 \eta_2(\eta_2^2-1) &2 \eta_1 \eta_2 \beta^2 & 0& 
  \eta_2 \beta(2 \eta_1^2-1)& \eta_1 \beta(2 \eta_2^2-1) \\ 2 \eta_1 \beta (\eta_1^2-1) &2\eta_1 \eta_2^2 \beta 
  & 2 \eta_1 \beta (\beta^2-1) & \eta_2 \beta(2 \eta_1^2-1) & 0
  & \eta_1 \eta_2 (2 \beta^2-1) \\
  2 \eta_1^2 \eta_2 \beta & 2 \eta_2 \beta(\eta_2^2-1)& 2 \eta_2 \beta(\beta^2-1) & \eta_1 \beta(2 \eta_2^2-1) &\eta_1 \eta_2(2\beta^2-1)& 
 0
\end{mpmatrix}
\end{equation*}
is its off-diagonal part.

\bibliographystyle{siam} \bibliography{SPARSE.bib}

\begin{thebibliography}{10}

\bibitem{GOTCHA}
{\em {GOTCHA volumetric SAR data set}}.
\newblock https://www.sdms.afrl.af.mil/index.php?collection=gotcha.

\bibitem{ammari2013mathematical}
{\sc H.~Ammari, J.~Garnier, W.~Jingand~H. Kang, M.~Lim, K.~S{\o}lna, and
  H.~Wang}, {\em Mathematical and statistical methods for multistatic imaging},
  vol.~2098, Springer, 2013.

\bibitem{ammari2007music}
{\sc H.~Ammari, E.~Iakovleva, D.~Lesselier, and G.~Perrusson}, {\em
  {MUSIC}-type electromagnetic imaging of a collection of small
  three-dimensional inclusions}, SIAM Journal on Scientific Computing, 29
  (2007), pp.~674--709.

\bibitem{baraniuk2007compressive}
{\sc R.~Baraniuk and P.~Steeghs}, {\em Compressive radar imaging}, in Radar
  Conference, 2007 IEEE, IEEE, 2007, pp.~128--133.

\bibitem{Biondi}
{\sc B.~Biondi}, {\em {3D seismic imaging}}, Society of Exploration
  Geophysicists, 2006.

\bibitem{bobin2008compressed}
{\sc J.~Bobin, J-L Starck, and R.~Ottensamer}, {\em Compressed sensing in
  astronomy}, IEEE Journal of Selected Topics in Signal Processing, 2 (2008),
  pp.~718--726.

\bibitem{borceaKocyigit}
{\sc L.~Borcea and I.~Kocyigit}, {\em Resolution analysis of imaging with
  $\ell_1$ optimization}, SIAM Journal on Imaging Sciences, 8 (2015),
  pp.~3015--3050.

\bibitem{borcea2016synthetic}
{\sc L.~Borcea, M.~Moscoso, G.~C. Papanicolaou, and C.~Tsogka}, {\em Synthetic
  aperture imaging of direction-and frequency-dependent reflectivities}, SIAM
  Journal on Imaging Sciences, 9 (2016), pp.~52--81.

\bibitem{bruckstein2009sparse}
{\sc A.~M. Bruckstein, D.~L. Donoho, and M.~Elad}, {\em From sparse solutions
  of systems of equations to sparse modeling of signals and images}, SIAM
  review, 51 (2009), pp.~34--81.

\bibitem{candes2006robust}
{\sc E.~J. Cand{\`e}s, J.~Romberg, and T.~Tao}, {\em Robust uncertainty
  principles: Exact signal reconstruction from highly incomplete frequency
  information}, IEEE Transactions on information theory, 52 (2006),
  pp.~489--509.

\bibitem{candes2005decoding}
{\sc E.~J. Candes and T.~Tao}, {\em Decoding by linear programming},
  Information Theory, IEEE Transactions on, 51 (2005), pp.~4203--4215.

\bibitem{candes2006near}
\leavevmode\vrule height 2pt depth -1.6pt width 23pt, {\em Near-optimal signal
  recovery from random projections: Universal encoding strategies?}, IEEE
  transactions on information theory, 52 (2006), pp.~5406--5425.

\bibitem{chai2013robust}
{\sc A.~Chai, M.~Moscoso, and G.~Papanicolaou}, {\em Robust imaging of
  localized scatterers using the singular value decomposition and ℓ1
  minimization}, Inverse Problems, 29 (2013), p.~025016.

\bibitem{chai2014imaging}
\leavevmode\vrule height 2pt depth -1.6pt width 23pt, {\em Imaging strong
  localized scatterers with sparsity promoting optimization}, SIAM Journal on
  Imaging Sciences, 7 (2014), pp.~1358--1387.

\bibitem{Chen2006MMV}
{\sc J.~Chen and X.~Huo}, {\em Theoretical results on sparse representations of
  multiple-measurement vectors}, IEEE Transactions on Signal Processing, 54
  (2006), pp.~4634--4643.

\bibitem{cheney2013imaging}
{\sc M.~Cheney}, {\em Imaging frequency-dependent reflectivity from
  synthetic-aperture radar}, Inverse Problems, 29 (2013), p.~054002.

\bibitem{cheney2009fundamentals}
{\sc M.~Cheney and B.~Borden}, {\em Fundamentals of radar imaging}, SIAM, 2009.

\bibitem{Cohen09compressedsensing}
{\sc A.~Cohen, W.~Dahmen, and R.~Devore}, {\em Compressed sensing and best
  k-term approximation}, J. Amer. Math. Soc,  (2009), pp.~211--231.

\bibitem{cotter2005sparse}
{\sc S.~F. Cotter, B.~D. Rao, K.~Engan, and K.~Kreutz-Delgado}, {\em Sparse
  solutions to linear inverse problems with multiple measurement vectors},
  Signal Processing, IEEE Transactions on, 53 (2005), pp.~2477--2488.

\bibitem{curlander1991synthetic}
{\sc J.~C. Curlander and R.~N. McDonough}, {\em Synthetic aperture radar}, John
  Wiley \& Sons New York, NY, USA, 1991.

\bibitem{cvx}
{\sc {CVX Research}}, {\em Cvx: matlab software for disciplined convex
  programming, version 2.0}.
\newblock {http://cvxr.com/cvx}{{\tt{http://cvxr.com/cvx}}}, August 2012.

\bibitem{donoho2006compressed}
{\sc D.~L. Donoho}, {\em Compressed sensing}, IEEE Transactions on information
  theory, 52 (2006), pp.~1289--1306.

\bibitem{donoho2003optimally}
{\sc D.~L. Donoho and M.~Elad}, {\em Optimally sparse representation in general
  (nonorthogonal) dictionaries via ℓ1 minimization}, Proceedings of the
  National Academy of Sciences, 100 (2003), pp.~2197--2202.

\bibitem{Donoho2006Uncertainty}
{\sc D.~L. Donoho and X.~Huo}, {\em Uncertainty principles and ideal atomic
  decomposition}, IEEE Trans. Inf. Theor., 47 (2006), pp.~2845--2862.

\bibitem{donoho1992signal}
{\sc D.~L. Donoho and B.~F. Logan}, {\em Signal recovery and the large sieve},
  SIAM Journal on Applied Mathematics, 52 (1992), pp.~577--591.

\bibitem{donoho1989uncertainty}
{\sc D.~L. Donoho and P.~B. Stark}, {\em Uncertainty principles and signal
  recovery}, SIAM Journal on Applied Mathematics, 49 (1989), pp.~906--931.

\bibitem{Eldar2009Robust}
{\sc Y.~C. Eldar and M.~Mishali}, {\em Robust recovery of signals from a
  structured union of subspaces}, IEEE Transactions on Information Theory, 55
  (2009), pp.~5302--5316.

\bibitem{fannjiang2013compressive}
{\sc A.~Fannjiang and H-C Tseng}, {\em Compressive radar with off-grid targets:
  a perturbation approach}, Inverse Problems, 29 (2013), p.~054008.

\bibitem{fannjiang2010compressed}
{\sc A.~C. Fannjiang, T.~Strohmer, and P.~Yan}, {\em Compressed remote sensing
  of sparse objects}, SIAM Journal on Imaging Sciences, 3 (2010), pp.~595--618.

\bibitem{lustig2007sparse}
{\sc M.~Lustig, D.~L. Donoho, and J.~M. Pauly}, {\em Sparse mri: The
  application of compressed sensing for rapid mr imaging}, Magnetic resonance
  in medicine, 58 (2007), pp.~1182--1195.

\bibitem{malioutov2005}
{\sc D.~Malioutov, M.~Cetin, and A.~S. Willsky}, {\em A sparse signal
  reconstruction perspective for source localization with sensor arrays}, IEEE
  Transactions on Signal Processing, 53 (2005), pp.~3010--3022.

\bibitem{potter2010sparsity}
{\sc L.~C. Potter, E.~Ertin, J.~T. Parker, and M.~Cetin}, {\em Sparsity and
  compressed sensing in radar imaging}, Proceedings of the IEEE, 98 (2010),
  pp.~1006--1020.

\bibitem{santosa1986linear}
{\sc F.~Santosa and W.~W. Symes}, {\em Linear inversion of band-limited
  reflection seismograms}, SIAM Journal on Scientific and Statistical
  Computing, 7 (1986), pp.~1307--1330.

\bibitem{sotirelis2013frequency}
{\sc P.~Sotirelis, J.~Parker, X.~Hu, M.~Cheney, and M.~Ferrara}, {\em
  Frequency-dependent reflectivity image reconstruction}, in SPIE Defense,
  Security, and Sensing, International Society for Optics and Photonics, 2013,
  pp.~874602--874602.

\bibitem{sotirelis2012study}
{\sc P.~Sotirelis, J.~T. Parker, M.~Fu, X.~Hu, and R.~Albanese}, {\em {A study
  of material identification using SAR}}, in Radar Conference (RADAR), 2012
  IEEE, IEEE, 2012, pp.~0112--0115.

\bibitem{tropp2004greed}
{\sc J.~A. Tropp}, {\em Greed is good: Algorithmic results for sparse
  approximation}, Information Theory, IEEE Transactions on, 50 (2004),
  pp.~2231--2242.

\bibitem{TroppSimultaneous2}
\leavevmode\vrule height 2pt depth -1.6pt width 23pt, {\em Algorithms for
  simultaneous sparse approximation: Part ii: Convex relaxation}, Signal
  Process., 86 (2006), pp.~589--602.

\bibitem{TroppSimultaneous1}
{\sc J.~A. Tropp, A.~C. Gilbert, and M.~J. Strauss}, {\em Algorithms for
  simultaneous sparse approximation: Part i: Greedy pursuit}, Signal Process.,
  86 (2006), pp.~572--588.

\bibitem{VanDenBerg2010MMV}
{\sc E.~van~den Berg and M.~P. Friedlander}, {\em Theoretical and empirical
  results for recovery from multiple measurements}, IEEE Transactions on
  Information Theory, 56 (2010), pp.~2516--2527.

\end{thebibliography}

\end{document}